\documentclass[11pt]{amsart}
\linespread{1.1}
\usepackage{geometry}
\usepackage[noadjust]{cite}

\textwidth=450pt 
\oddsidemargin=12pt
\evensidemargin=12pt

\geometry{letterpaper}                   
\usepackage{graphicx, xcolor}
\usepackage{amssymb, mathrsfs}
\usepackage{epstopdf}
\DeclareGraphicsRule{.tif}{png}{.png}{`convert #1 `dirname #1`/`basename #1 .tif`.png}
\usepackage{stmaryrd}
\usepackage{array}
\usepackage{multirow}
\usepackage{hyperref}
\usepackage[new]{old-arrows}
\usepackage{nicefrac}
\usepackage{ytableau}
\usepackage{multirow, lipsum, stmaryrd}

\numberwithin{equation}{section}

\theoremstyle{plain}
\newtheorem{thm}{Theorem}[section]
\newtheorem{lem}[thm]{Lemma}
\newtheorem{prop}[thm]{Proposition}
\newtheorem{cor}[thm]{Corollary}

\newtheorem{thmx}{Theorem}

\theoremstyle{definition}
\newtheorem{defn}[thm]{Definition}

\newtheorem{ex}[thm]{Example}

\theoremstyle{remark}
\newtheorem{rmk}[thm]{Remark}

\usepackage[titletoc]{appendix}
\usepackage[english]{babel}
\usepackage{amsmath, amssymb, amsfonts, amsthm}
\usepackage{graphicx}
\usepackage[hang,small,bf]{caption}
\usepackage{mathtools}
\usepackage[all,cmtip]{xy}
\usepackage[]{units}
\usepackage{amssymb,graphicx}
\usepackage{pictexwd,dcpic}
\usepackage{tikz}
\usepackage{enumerate}
\usepackage{tikz-cd}
\usetikzlibrary{matrix,arrows}
\usepackage{cancel}
\usetikzlibrary{calc}
\usepackage{stmaryrd}
\usepackage{enumitem}

\usepackage{tikz-cd}

\bibliographystyle{utphys}
\usepackage{comment} 

\DeclareMathOperator{\Hom}{Hom}									
\DeclareMathOperator{\HHom}{\mathcal{H}om}					
\DeclareMathOperator{\End}{End}									
\DeclareMathOperator{\EEnd}{\mathcal{E}nd}	
\DeclareMathOperator{\Aut}{Aut}										
\DeclareMathOperator{\Coh}{\mathsf{Coh}}									

\DeclareMathOperator{\Pic}{Pic}									
\newcommand{\OO}[1]{\mathcal{O}_{#1}}							
\newcommand{\Ztwo}{\nicefrac{\mathbb{Z}}{2\mathbb{Z}}}					
\DeclareMathOperator{\Ext}{Ext}										
\DeclareMathOperator{\EExt}{\mathcal{E}xt}						
\DeclareMathOperator{\NS}{NS}       				
\DeclareMathOperator{\Amp}{Amp}  						
\DeclareMathOperator{\rk}{rk} 						
\DeclareMathOperator{\im}{Im} 										
 	
\DeclareMathOperator{\ch}{ch}						
					
\DeclareMathOperator{\Sym}{Sym}

\DeclareMathOperator{\ide}{id}						
\DeclareMathOperator{\GL}{GL}									
\DeclareMathOperator{\gl}{\mathfrak{gl}}							
\DeclareMathOperator{\SL}{SL}										
\DeclareMathOperator{\slie}{\mathfrak{sl}}						
\DeclareMathOperator{\SP}{Sp}									
\DeclareMathOperator{\splie}{\mathfrak{sp}}						
\DeclareMathOperator{\Grass}{G}							
	
\DeclareMathOperator{\Div}{Div}	

\usepackage{hyperref}

\definecolor{imperial}{rgb}{0.015625, 0.25, 0.4375}
\definecolor{denim}{rgb}{0.08, 0.38, 0.74}

\hypersetup{
    colorlinks=true,
    linkcolor=imperial,
    citecolor=imperial,
    filecolor=magenta,      
    urlcolor=denim
    }

\title{Holomorphic symplectic manifolds from semistable Higgs bundles}
\author{Roland Abuaf and Riccardo Carini}

\calclayout
\begin{document}

\begin{abstract}
Let $\mathcal{M}_{C}(2, 0)$ be the moduli space of semistable rank two and degree zero Higgs bundles on a smooth complex hyperelliptic curve $C$ of genus three. We prove that the quotient of $\mathcal{M}_{C}(2, 0)$ by a twisted version of the hyperelliptic involution is an 18-dimensional holomorphic symplectic variety admitting a crepant resolution, whose local model was studied by Kaledin and Lehn to describe O'Grady's singularities. Similarly, by considering the moduli space of Higgs bundles with trivial determinant $\mathcal{M}_C(2, \OO{C})\subseteq \mathcal{M}_C(2, 0)$, we show that the quotient of $\mathcal{M}_C(2, \OO{C})$ by the hyperelliptic involution is a 12-dimensional holomorphic symplectic variety admitting a crepant resolution. 
\end{abstract}
\maketitle

\addtocontents{toc}{\protect\setcounter{tocdepth}{-1}}
\section*{Introduction}

Let $C$ be a smooth complex projective curve of genus $g\geq2$ and let $\mathcal{M}_{C}(2, 0)$ be the moduli space of $\GL(2,\mathbb{C})$-Higgs bundles on $C$. Its closed points are given by equivalence classes of pairs $(E,\phi)$, where $E$ is a rank two bundle on $C$ of degree zero and $\phi\in\text{Hom}(E,E\otimes \omega_{C})\cong\text{Ext}^{1}(E,E)^{\ast}$, subject to a suitable stability condition. 
Hence, it is a partial compactification of the total space of the cotangent bundle to the moduli space $\mathcal{U}_{C}^{s}(2, 0)$ of stable rank two and degree zero bundles on $C$. In particular, it is naturally endowed with a symplectic form on the smooth locus and is an instance of a holomorphic symplectic variety in the sense of \cite{Beauville-symplectic-singularities}.

Assume $C$ is hyperelliptic, i.e. is endowed with a two-to-one morphism to $\mathbb{P}^{1}$ and denote by $\sigma \in\text{Aut}(C)$ the covering involution. When $g=3$, the action of $\sigma$ can be lifted to the moduli space $\mathcal{M}_{C}(2, 0)$  so that the quotient admits a symplectic resolution:

\begin{thmx}\label{main_thm_GL}
Let $C$ be a smooth hyperelliptic curve of genus three and let $\sigma \in\operatorname{Aut}(C)$ be the hyperelliptic involution. Then the quotient of $\mathcal{M}_{C}(2, 0)$ by the $\Ztwo$-action generated by the symplectic involution
\begin{equation}\label{intro_action}
(E,\phi)\mapsto(\sigma^{\ast}E^\ast , -\sigma^{\ast}\phi^t)
\end{equation}
is an 18-dimensional holomorphic symplectic variety which admits a symplectic resolution obtained by blowing-up the reduced singular locus. 
\end{thmx}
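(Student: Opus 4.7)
The first task is to verify that $\tau:(E,\phi)\mapsto(\sigma^*E^*,-\sigma^*\phi^t)$ defines a symplectic involution of $\mathcal{M}_C(2,0)$. Each of the operations $E\mapsto E^*$, $E\mapsto\sigma^*E$, and $\phi\mapsto-\phi^t$ preserves the symplectic form inherited from the identification of the smooth locus of $\mathcal{M}_C(2,0)$ with a partial compactification of $T^*\mathcal{U}_C^s(2,0)$; hence so does $\tau$, and it squares to the identity because both $\sigma$ and the dualization do. It follows that $X:=\mathcal{M}_C(2,0)/\tau$ is an 18-dimensional irreducible variety which is smooth and symplectic away from the image of the union of the strictly semistable locus of $\mathcal{M}_C(2,0)$ and the fixed locus $\mathcal{M}_C(2,0)^{\tau}$.

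Next I would give an explicit description of these singularities. A $\tau$-fixed Higgs bundle is a pair $(E,\phi)$ equipped with an isomorphism $\varphi:E\xrightarrow{\sim}\sigma^*E^*$ intertwining $\phi$ with $-\sigma^*\phi^t$; equivalently, $\sigma_*E$ descends to a twisted orthogonal/symplectic Higgs object on $\mathbb{P}^1=C/\sigma$. Applying Riemann--Roch on the double cover $C\to\mathbb{P}^1$ and tracking the $\sigma$-isotypic components yields a parameter count of $\mathcal{M}_C(2,0)^{\tau}$ in terms of a Prym-type variety. In parallel, I would recall the étale-local structure of $\mathcal{M}_C(2,0)$ along its strictly semistable stratum, where polystable representatives are of the form $(L,\phi_L)\oplus(L^{-1},\phi_{L^{-1}})$, and analyze how $\tau$ permutes the two summands versus acting internally on each.

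The heart of the argument is the local symplectic analysis at every type of singular point of $X$. At a smooth $\tau$-fixed point $(E,\phi)$, the tangent space $\mathbb{H}^1\bigl(\EEnd(E)\xrightarrow{[\phi,\cdot]}\EEnd(E)\otimes\omega_C\bigr)$ decomposes symplectically as $T^+\oplus T^-$, whose dimensions I would compute from the trace decomposition and Riemann--Roch on $C$. At polystable points, the étale local model of $\mathcal{M}_C(2,0)$ is a Lehn--Sorger-type symplectic quotient on which $\tau$ acts; I would verify that the resulting symplectic $\mathbb{Z}/2$-quotient matches the Kaledin--Lehn local model of the O'Grady singularity. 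Once this identification is achieved stratum by stratum, the Kaledin--Lehn theorem produces a symplectic resolution by a single blow-up of the reduced singular locus, which glues to the required global crepant resolution of $X$.

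The main obstacle is precisely this uniform étale-local identification, particularly along the locus where $\mathrm{Fix}(\tau)$ meets the strictly semistable stratum: two independent sources of symplectic singularities interact there, and it is essential that the combined $\mathbb{Z}/2$-quotient of the Lehn--Sorger local model still belongs to the family studied by Kaledin and Lehn. Once this representation-theoretic bookkeeping is in place, the crepancy of the blow-up of the reduced singular locus and the global extension of the symplectic form from the smooth locus follow in a standard fashion.
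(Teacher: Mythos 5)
Your overall strategy -- study the singularities of the quotient stratum by stratum, identify the $\Ztwo$-quotient of the Kaledin--Lehn/Lehn--Sorger local model with something admitting a symplectic resolution by a single blow-up, then globalize -- is the same as the paper's, but there is a genuine gap at its heart: you never identify \emph{which} variety the $\Ztwo$-quotient of the local model is, and consequently the genus-three hypothesis is never used. At the deepest singular point $(\OO{C}^{\oplus 2},0)$ the local model of $\mathcal{M}_C(2,0)$ is $V\times\bigl(V\otimes\mathfrak{sl}(2,\mathbb{C})\sslash\SL(2,\mathbb{C})\bigr)$ with $V=\Ext^1(\OO{C},\OO{C})$ of dimension $2g$, and the quotient by the involution is \emph{not} "the Kaledin--Lehn local model of the O'Grady singularity": it is $V\times\mathcal{N}_{\leq 3}(V)$, the target of Kaledin--Lehn's finite two-to-one map $\mu$, i.e.\ the closure of the rank-three $2$-nilpotent coadjoint orbit in $\mathfrak{sp}(V)$. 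This closure admits a symplectic resolution (equal to the blow-up of its reduced singular locus, namely $T^{\ast}\Lambda(3,V)$) precisely when $3=\tfrac{1}{2}\dim V$, i.e.\ when $g=3$; for $g\geq 4$ it is $\mathbb{Q}$-factorial with singularities in codimension $>2$ and has no symplectic resolution at all. As written, your plan would apply verbatim to any hyperelliptic genus and would therefore "prove" a false statement; the missing step is exactly the identification of the quotient germ with the maximal $2$-nilpotent cone and the verification that this cone's Springer-type resolution is the blow-up of the reduced singular locus.

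A second, related gap is the point you yourself flag as the main obstacle but leave unaddressed: you must show that the \emph{global} involution $(E,\phi)\mapsto(\sigma^{\ast}E^{\ast},-\sigma^{\ast}\phi^{t})$, restricted to the analytic germ at $(\OO{C}^{\oplus 2},0)$, coincides with the covering involution of $\mu$ (the residual $-\ide_V\in\SP(V)$ acting on the Hamiltonian reduction). The mechanism is not a Prym-type dimension count: one checks that $\sigma$ and the duality involution agree on rank-one Higgs bundles (so the composite acts trivially on the factor $V$), while on the $\mathfrak{sl}(2,\mathbb{C})$ factor the composite acts by $a\mapsto a^{t}$, which is $\SL(2,\mathbb{C})$-conjugate to $-\ide$ on the reduction; this is where hyperellipticity (via $\sigma^{\ast}\xi\cong\xi^{-1}$ on $\Pic^0(C)$ and the sign of the action on $H^0(C,\omega_C)$) enters, and it also forces the fixed locus to \emph{contain} the entire strictly semistable locus rather than merely meet it, contrary to the transverse picture in your sketch. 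Two smaller points: the individual operations $E\mapsto E^{\ast}$, $\phi\mapsto-\phi$ are anti-symplectic (only the composite with $\sigma^{\ast}$ is symplectic), and the well-definedness of the duality on S-equivalence classes and in families needs an argument (the paper does this via the spectral correspondence and a twisted Gieseker-stability computation); finally, "gluing" the local statements requires knowing that the non-crepant locus of the global blow-up is a union of symplectic leaves (normal flatness along the strata), so that checking one point per connected component of each stratum suffices.
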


By restricting to Higgs bundles with $\operatorname{tr}\phi=0$ and $\det E\cong \OO{C}$, one gets the holomorphic symplectic subvariety $\mathcal{M}_C(2, \OO{C})\subseteq \mathcal{M}_C(2,0)$, which parametrises $\SL(2,\mathbb{C})$-Higgs bundles and is stable under the $\Ztwo$-action described above.
In this setting, we get the following:

\begin{thmx}\label{main_thm_SL}
Let $C$ be a smooth hyperelliptic curve of genus three and let $\sigma \in\operatorname{Aut}(C)$ be the hyperelliptic involution.
The quotient of $\mathcal{M}_{C}(2, \OO{C})$ by
the $\Ztwo$-action generated by the symplectic involution
\begin{equation}
(E,\phi)\mapsto(\sigma^{\ast}E , \sigma^{\ast}\phi)
\end{equation}
is a 12-dimensional holomorphic symplectic variety which admits a symplectic resolution obtained by blowing-up the reduced singular locus. 
\end{thmx}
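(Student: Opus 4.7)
The plan is to follow the strategy of Theorem~\ref{main_thm_GL} and reduce Theorem~\ref{main_thm_SL} as much as possible to statements already proved in the $\GL$ case. The key preliminary observation is that on rank two Higgs bundles with $\det E \cong \OO{C}$, the symplectic form on $E$ induced by the trivialization of $\det E$ yields a natural isomorphism $E \cong E^\ast$ under which $\phi^t = -\phi$ for any trace-zero Higgs field. Consequently the $\GL$-involution~\eqref{intro_action} restricts to the simpler assignment $(E,\phi) \mapsto (\sigma^\ast E, \sigma^\ast \phi)$ on $\mathcal{M}_C(2,\OO{C})$, so the involution of Theorem~\ref{main_thm_SL} is exactly the restriction of the one in Theorem~\ref{main_thm_GL}.

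The first step is to verify that the restricted involution is well-defined and symplectic on $\mathcal{M}_C(2,\OO{C})$: $\sigma^\ast$ preserves the trivial determinant (since $\sigma^\ast\OO{C}\cong \OO{C}$) and the trace-zero condition, and the symplectic form on $\mathcal{M}_C(2,\OO{C})$, built from Serre duality and the trace pairing on $\EEnd(E)$, is natural under pullback. The next step is to identify and stratify the fixed locus: a point $(E,\phi)$ is fixed precisely when it admits a $\sigma$-equivariant structure, and classifying such equivariant $\SL(2,\mathbb{C})$-Higgs bundles in terms of Prym-type data on the quotient $C/\langle\sigma\rangle \cong \mathbb{P}^1$ (with ramification prescribed at the eight Weierstrass points) yields the dimension and the irreducible components of each stratum.

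The crucial step is to verify that $\mathcal{M}_C(2,\OO{C})/\Ztwo$ has symplectic singularities whose analytic-local transverse structure at each point of the reduced singular locus matches the Kaledin--Lehn local model appearing in Theorem~\ref{main_thm_GL}. At each fixed point $(E,\phi)$ one decomposes $T_{(E,\phi)}\mathcal{M}_C(2,\OO{C}) \cong \mathbb{H}^1$ of the trace-free Higgs complex into $\pm 1$-eigenspaces for $\sigma^\ast$, and compares the resulting linearized action with the Kaledin--Lehn normal form. Once this matching holds at every stratum, Kaledin--Lehn's theorem provides the crepant symplectic resolution by blowing up the reduced singular locus.

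The main obstacle is the local analysis at the deepest strata, where $\mathcal{M}_C(2,\OO{C})$ is already singular at strictly semistable Higgs bundles and the $\sigma$-fixed locus intersects this singular locus nontrivially. Disentangling the two sources of singularity and verifying that the combined local model remains of Kaledin--Lehn type is the technically delicate point. However, the trivial-determinant condition rigidifies $S$-equivalence classes relative to the $\GL$ case, so one should be able to deduce the required local models from those established for Theorem~\ref{main_thm_GL} by carefully restricting the analysis to the codimension-six subvariety $\mathcal{M}_C(2,\OO{C})\subseteq \mathcal{M}_C(2,0)$.
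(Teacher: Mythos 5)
There is a genuine gap, and it sits exactly where the real content of the theorem lies. Your plan is to "match the Kaledin--Lehn local model appearing in Theorem~\ref{main_thm_GL}" at every point of the singular locus and then invoke "Kaledin--Lehn's theorem" for the crepant resolution. But the Kaledin--Lehn description of the germ at $(\OO{C}^{\oplus 2},0)$ — namely $V\otimes\slie(2,\mathbb{C})\sslash\SL(2,\mathbb{C})$ with $V=\Ext^1(\OO{C},\OO{C})$ — provides no resolution at all; it is precisely the input for the no-go results saying $\mathcal{M}_C(2,\OO{C})$ itself has no crepant resolution for $g\geq 3$. What the paper actually does is: (i) prove, independently, that the closure $\mathcal{N}_{\leq 3}(\mathbb{C}^{6})$ of the maximal $2$-nilpotent orbit in $\splie(6,\mathbb{C})$ admits a symplectic resolution given by blowing up its reduced singular locus (Theorem~\ref{blow-up-nilpotent-cone}; this is where $g=3$, i.e.\ $\dim V=2g=6$ and $r=n=3$, enters — for $g\geq 4$ the statement is false); and (ii) show that the two-to-one Kaledin--Lehn map $\mu\colon(\mathcal{M}_C(2,\OO{C}),(\OO{C}^{\oplus 2},0))\to(\mathcal{N}_{\leq 3}(V),0)$ has covering involution $-\ide_V$ on the Hamiltonian reduction, and that the \emph{global} hyperelliptic involution $\sigma$ restricts to exactly this covering involution near the deepest stratum (Lemma~\ref{tangent-action-sigma-tau}, which uses that $\sigma$ and $\tau$ agree on $\mathcal{M}_C(1,0)$ and that $\tau$ acts by transposition on $\slie(2,\mathbb{C})$). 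Only then does $\mu$ descend to an isomorphism of germs $(Y,\pi(\OO{C}^{\oplus 2},0))\cong(\mathcal{N}_{\leq 3}(V),0)$, from which crepancy of the blow-up follows locally, and globally via the stratification argument (Corollary~\ref{locus-non-crepant-resolution-clopen} plus the transverse $A_1$-singularity along the smooth part of $\operatorname{Fix}(\sigma)$). None of (i), (ii), nor the eigenvalue computation identifying $\sigma$ with the covering involution appears in your proposal; decomposing the tangent space into $\pm1$ eigenspaces does not by itself tell you the quotient germ is a nilpotent orbit closure with a crepant blow-up, and your write-up never uses $g=3$.

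There is also a structural problem: you propose to deduce Theorem~\ref{main_thm_SL} from "statements already proved" for Theorem~\ref{main_thm_GL}, but in the paper the logical order is the reverse — Theorem~\ref{main_thm_GL} is deduced from Theorem~\ref{main_thm_SL} via the strongly \'etale Galois cover $\mathcal{M}_C(2,\OO{C})\times T^\ast\Pic^0(C)\to\mathcal{M}_C(2,0)$, because the local analysis is cleanest on the fixed-determinant piece. As written, your reduction is circular: there is no independent proof of the $\GL$ case to restrict from, and restricting would in any case only relocate, not remove, the need for the local identification with $\mathcal{N}_{\leq 3}(\mathbb{C}^6)$ and its resolution. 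The correct observation you do make — that $\sigma\circ\tau$ restricts to $\sigma$ on $\mathcal{M}_C(2,\OO{C})$ because the trivialised determinant gives a skew-symmetric $E\cong E^\ast$ under which trace-free fields satisfy $\phi^t=-\phi$ — is indeed Lemma~\ref{characterisation-SL}/Proposition~\ref{SL-fixed-in-GL} of the paper, but it only sets up the involution; it does not carry the proof.
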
  

Since the moduli space $\mathcal{M}_C(2, \OO{C})$ is the fibre over $(\OO{C}, 0)$ of the isotrivial fibration
\begin{equation}\label{isotrivial-fibration-intro}
\begin{split}
\mathcal{M}_C(2, 0) & \to T^\ast \Pic^0(C), \\
(E,\phi) & \mapsto (\det(E), \operatorname{tr} \phi),
\end{split}
\end{equation}  
which is invariant with respect to \eqref{intro_action}, the morphism \eqref{isotrivial-fibration-intro} descends to the quotient. Hence $\mathcal{M}_C(2, \OO{C})/\Ztwo$ is also the fibre of an isotrivial fibration defined on $\mathcal{M}_C(2, 0)/\Ztwo$, which is trivialised after a single \'etale cover. In particular the two holomorphic symplectic varieties arising from Theorems \ref{main_thm_GL} and \ref{main_thm_SL} are stably isosingular in the sense of \cite[Def. 2.6]{Mirko}.

\subsection*{Symplectic varieties and crepant resolutions} Recall a holomorphic symplectic variety is a normal variety $X$ with rational Gorenstein singularities whose regular locus carries a symplectic form $\omega \in H^0(X_{\text{reg}}, \Omega^2_{X_{\text{reg}}})$  \cite{Beauville-symplectic-singularities, Namikawa2001}.

A result of Namikawa \cite[Thm. 4]{Namikawa2001} shows that for every resolution of singularities $f: \tilde{X}\to X$, the symplectic form $f^\ast \omega$ extends to a regular form on $\tilde{X}$. We say a resolution $f: \tilde{X}\to X$ is symplectic if such an extension is symplectic. By \cite[Prop. 3.2]{Kaledin-crepant}, a resolution $f: \tilde{X}\to X$ of a symplectic variety $X$ is symplectic if and only if it is crepant, i.e. the canonical morphism $f^\ast \omega_X\to \omega_{\tilde{X}}$ is an isomorphism. 

\subsection*{Crepant resolutions of moduli spaces of Higgs bundles} We are interested in Theorems \ref{main_thm_GL} and \ref{main_thm_SL} because singular moduli spaces of semistable $\GL(2,\mathbb{C})$ or $\SL(2,\mathbb{C})$-Higgs bundles are instances of holomorphic symplectic varieties which cannot give rise to new holomorphic symplectic manifolds via crepant resolutions due to the nature of their singularities (see the no-go results in \cite{Kiem, KL, Kaledin-Lehn-Sorger} and \cite{Tirelli}). An exception is given by the genus $g=2$ case, which is analogous to O'Grady's exceptional examples  \cite{O'Grady10, O'Grady6} in the realm of K3 surfaces (see Theorem \ref{singularities-GL-Higgs}). 

Nevertheless, this leaves open the possibility of exploiting finite symplectic group actions on such moduli spaces, and looking for symplectic resolutions after passing to the fixed loci or quotients. Indeed, the explicit local model around the worst singularity of $\mathcal{M}_{C}(2, 0)$ given in \cite{KL} -- where it was used to describe O'Grady's exceptional examples -- has a natural symplectic resolution after passing to a quotient by a symplectic involution: this is what we use in Theorems \ref{main_thm_GL} and  \ref{main_thm_SL}, by finding a suitable global extension of this local involution. 

Note that this strategy has been already highlighted in \cite{Markushevich}, \cite{Sacca-Prym} and the more recent \cite{Brakkee-Camere-etc}, where fixed loci of symplectic involutions on compact moduli spaces of pure one-dimensional sheaves on K3 surfaces are studied, in order to construct families of Prym varieties. However, in the compact setting, it seems almost impossible to avoid the existence of stable sheaves which become unstable after being pulled back by the relevant symplectic involution  \cite[\textsection 3.5]{Sacca-Prym}. In the Higgs bundles case, such difficulties do not appear (see Corollary \ref{dualising-bundle}) and we are able to fully carry out the aforementioned strategy.

\subsection*{A different formulation} Since we can realise $\mathcal{M}_{C}(2, \OO{C})$ inside $\mathcal{M}_{C}(2,0)$
as a connected component of the fixed locus of the symplectic
involution $\tau$ defined by 
\[
(E,\phi)\mapsto(E^{\ast},-\phi^{t}),
\]
we can rewrite Theorems \ref{main_thm_GL} and \ref{main_thm_SL} in the following form:

\begin{thmx}\label{reformulation}
Let $C$ be a smooth hyperelliptic curve of genus three. Then there exist two commuting symplectic involutions $\tau$ and $\sigma$
on the moduli space $\mathcal{M}_{C}(2,0)$ such that:
\begin{enumerate}[label={\upshape(\roman*)}]
\item The quotient $\mathcal{M}_C(2,0)/ \langle \sigma\circ \tau \rangle$ is an 18-dimensional holomorphic symplectic variety admitting a symplectic resolution obtained by blowing-up the reduced singular locus.
\item There is a connected component $\mathcal{M}_C(2, \OO{C})$ of the fixed locus of $\tau$ which is a holomorphic symplectic subvariety of $\mathcal{M}_C(2,0)$ and is acted on by $\sigma$.
\item The quotient of  $\mathcal{M}_C(2,\OO{C})$ by the action of $\sigma$ is a 12-dimensional holomorphic symplectic variety admitting a symplectic resolution obtained by blowing-up the reduced singular locus. 
\end{enumerate}
\end{thmx}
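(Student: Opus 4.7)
The plan is to derive Theorem~\ref{reformulation} from Theorems~\ref{main_thm_GL} and~\ref{main_thm_SL} by making the two involutions explicit and tracking how they interact. Define
\[
\tau\colon (E,\phi)\mapsto (E^{\ast},-\phi^{t}),\qquad \sigma\colon (E,\phi)\mapsto (\sigma^{\ast}E,\sigma^{\ast}\phi),
\]
where by abuse of notation $\sigma$ also denotes the hyperelliptic involution of $C$. Both maps preserve semistability and descend to morphisms of $\mathcal{M}_{C}(2,0)$, and a direct computation shows they commute, since transposition and dualisation of sheaves commute with pullback along automorphisms of $C$.

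First I would check that $\tau$ and $\sigma$ are symplectic involutions. For $\sigma$ this is immediate, as pullback by an automorphism of $C$ is compatible with the natural symplectic form extending the Liouville form on $T^{\ast}\mathcal{U}_{C}^{s}(2,0)$. For $\tau$ the action on the Higgs deformation complex splits $T_{(E,\phi)}\mathcal{M}_{C}(2,0)$ into symmetric and antisymmetric parts on which $\tau$ acts by $\pm 1$, and the product of the two signs ensures that the Serre-duality symplectic pairing is preserved.

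Next I would establish (ii). Fixed points of $\tau$ satisfy $E\cong E^{\ast}$ and $\operatorname{tr}\phi=0$, which forces $(\det E)^{2}\cong\OO{C}$; since $\Pic(C)[2]$ is discrete, the determinant morphism is locally constant on the fixed locus, so the fixed locus of $\tau$ decomposes as a disjoint union indexed by two-torsion line bundles, and the component with $\det E\cong\OO{C}$ is exactly $\mathcal{M}_{C}(2,\OO{C})$. The delicate point is ruling out an extra component coming from symmetric rather than antisymmetric isomorphisms $E\cong E^{\ast}$: such a locus consists of strictly semistable orthogonal bundles $L\oplus L^{-1}$ with $L^{2}\cong\OO{C}$ and has strictly smaller dimension, so it cannot contribute an additional component containing a generic point of $\mathcal{M}_{C}(2,\OO{C})$. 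Since $\sigma$ and $\tau$ commute, $\sigma$ preserves this component.

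Parts (i) and (iii) are then immediate. The composition $\sigma\circ\tau$ sends $(E,\phi)$ to $(\sigma^{\ast}E^{\ast},-\sigma^{\ast}\phi^{t})$, which is precisely the involution of Theorem~\ref{main_thm_GL}, so (i) follows. The restriction of $\sigma$ to $\mathcal{M}_{C}(2,\OO{C})$ is the involution of Theorem~\ref{main_thm_SL}, so (iii) follows. The principal obstacle in the whole argument is the component identification in (ii); everything else is formal bookkeeping on top of the two main theorems.
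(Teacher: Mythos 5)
Your overall route coincides with the paper's: Theorem \ref{reformulation} is deduced from Theorems \ref{main_thm_GL} and \ref{main_thm_SL} once one knows that $\mathcal{M}_C(2,\OO{C})$ is a connected component of $\operatorname{Fix}(\tau)$, parts (i) and (iii) being just the identification of $\sigma\circ\tau$ and of $\sigma$ restricted to $\mathcal{M}_C(2,\OO{C})$ with the involutions of those theorems. The genuine gap is in your treatment of (ii). You prove only one inclusion: a $\tau$-fixed point has $(\det E)^2\cong\OO{C}$ and $\operatorname{tr}\phi=0$, so the piece of $\operatorname{Fix}(\tau)$ with $\det E\cong \OO{C}$ is \emph{contained} in $\mathcal{M}_C(2,\OO{C})$. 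To conclude that this piece is ``exactly'' $\mathcal{M}_C(2,\OO{C})$ -- and hence that $\mathcal{M}_C(2,\OO{C})$ is an entire connected component of $\operatorname{Fix}(\tau)$ rather than merely containing part of one -- you must show conversely that \emph{every} polystable pair with $\det E\cong\OO{C}$ and $\operatorname{tr}\phi=0$ satisfies $(E,\phi)\cong(E^\ast,-\phi^t)$. This is the paper's Lemma \ref{characterisation-SL}: the trivialisation $\Lambda^2E\cong\OO{C}$ furnishes a canonical skew-symmetric isomorphism $f\colon E\to E^\ast$, and $\operatorname{tr}\phi=0$ is equivalent to $f$ intertwining $\phi$ with $-\phi^t$ (the coincidence $\operatorname{Sp}(2,\mathbb{C})\cong\SL(2,\mathbb{C})$). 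You never argue this; what you flag as the ``delicate point'' is a different issue (possible components where the isomorphism $E\cong E^\ast$ is symmetric), which does not threaten (ii) once the converse inclusion is known -- such loci either have nontrivial $2$-torsion determinant, hence lie in other fibres of $(\det,\operatorname{tr})$, or sit inside $\mathcal{M}_C(2,\OO{C})$ itself -- and your claim that the symmetric locus consists only of strictly semistable bundles $L\oplus L^{-1}$ is moreover false: stable rank-two orthogonal bundles exist, e.g. pushforwards of line bundles along \'etale double covers of $C$.

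Secondarily, you assert that $\tau$ preserves semistability and descends to a (symplectic) involution of the coarse moduli space, with symplecticity justified only by a sign heuristic on the deformation complex. The paper treats this with care: via the spectral correspondence it realises $\tau$ through the duality $F\mapsto\EExt^1_{\overline{S}}(F,\xi)$ on the ruled surface, with a specific twist and polarisation (Proposition \ref{dualising-involution} and Corollary \ref{dualising-bundle}), which is what guarantees compatibility with families and $S$-equivalence and lets one see $\tau$ as the composition of two anti-symplectic involutions, hence symplectic. For a single Higgs pair with the slope condition your claim is elementary, but as written the family-level and symplecticity assertions are unsubstantiated; these are fixable, whereas the missing converse inclusion in (ii) is the real gap.
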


\subsection*{Beauville--Mukai systems} This reformulation has the advantage of suggesting a way of adapting our construction to moduli spaces of pure one-dimensional sheaves on compact symplectic surfaces. Indeed, the $\GL(2,\mathbb{C})$-Hitchin system naturally deforms to a Beauville--Mukai system (\cite{Donagi}), while there is no a priori obvious candidate for an analogue of the $\SL(2,\mathbb{C})$-Hitchin system. See also \cite[Appendix]{Felisetti-Mauri}, \cite{Franco} and \cite{franco2022ogrady} for an account of the problem. 

It then seems natural to ask whether there are compact holomorphic symplectic manifolds arising from desingularising quotients of a suitable Beauville--Mukai systems which degenerate to the non-compact examples provided by Theorems \ref{main_thm_GL} and \ref{main_thm_SL}. This is the main subject of ongoing research.

\subsection*{Outline} 
In Section \ref{section_symplectic_varieties} we gather standard definitions and results about holomorphic symplectic varieties and crepant resolutions.

In Section \ref{section_local_model} we describe the algebraic local model we will use later on, which is given by the closure of a nilpotent orbit in the symplectic Lie algebra $\mathfrak{sp}(2n, \mathbb{C})$. These varieties have been classically widely studied (\cite{Kraft-Procesi}, \cite{Panyushev}, \cite{Fu}), but they were first exploited in \cite{KL} to describe singularities of moduli spaces of sheaves on symplectic surfaces. 

In Section \ref{section_equivariant_hyperelliptic} we study equivariant bundles on hyperelliptic curves, with respect to the $\Ztwo$-action given by the hyperelliptic involution. Along the way, we also describe the induced action on the moduli spaces of rank two bundles and the corresponding quotients. 

In Section \ref{section_Higgs} we introduce moduli spaces of Higgs bundles and the involutions mentioned in Theorems \ref{main_thm_GL}, \ref{main_thm_SL} and \ref{reformulation}, showing that the $\SL(2,\mathbb{C})$-locus can be realised as the fixed locus of one of them. 

Finally in Section \ref{section_final_proofs}, we relate an analytic neighbourhood of the worst singularity of the moduli spaces to the local model described in Section \ref{section_local_model}, so that it admits a crepant resolution after taking the quotient by a local involution.  We then prove that such an involution can be extended to the whole moduli space by means of the hyperelliptic involution. 

\subsection*{Notation and conventions} 
Throughout the paper we work over the field $\mathbb{C}$ of complex numbers. A \emph{scheme} will always be a scheme of finite type over $\mathbb{C}$, and by \emph{algebraic variety} we mean an integral, separated scheme over $\mathbb{C}$. We will often think of a variety $X$ as the associated complex analytic space $X^\text{an}$. Therefore, when employing analytic notions within the realm of algebraic geometry we actually refer to $X^\text{an}$ (e.g. \emph{manifold}, \emph{analytic neighbourhoods}, \emph{holomorphic forms} etc.).  

Given a variety, we denote by $X^\text{reg}$ and $X^\text{sing}$ the open and closed subsets given by the smooth and singular locus (with its reduced structure), respectively. When $X$ is normal, and $j\colon X^\text{reg} \hookrightarrow X$ denotes the open immersion of its smooth locus, the sheaf of reflexive $p$-forms (for $0\leq p\leq \dim X$) is defined to be
\begin{equation}
\Omega^{[p]}_X \coloneqq j_\ast \Omega^p_{X^\text{reg}}\cong (\Lambda^p \Omega_X)^{\ast \ast},
\end{equation}
so that $H^0(X, \Omega_X^{[p]})=H^0(X^\text{reg}, \Omega_{X^\text{reg}}^p)$, i.e. a reflexive $p$-form is nothing but a regular $p$-form on the smooth locus $X^\text{reg}$. We will denote by $\omega_X\coloneqq \Omega_X^{[\dim X]}$ and call it the \emph{canonical sheaf} of $X$. When $X$ is Cohen--Macaulay, it agrees with the dualising sheaf, which is a line bundle when $X$ is Gorenstein.

\subsection*{Acknowledgments} We would like to thank Richard Thomas for being enthusiastic about this project and for reading several drafts of this paper. The second author is especially grateful for his constant support and guidance. Additionally, the second author wishes to thank Arend Bayer, Daniel Huybrechts and Travis Schedler for stimulating discussions and valuable feedback, as well as Federico Bongiorno, Alessio Bottini, Luigi Martinelli, and Mirko Mauri for very helpful conversations.

\addtocontents{toc}{\protect\setcounter{tocdepth}{1}}
\tableofcontents

\section{Holomorphic symplectic varieties}\label{section_symplectic_varieties}

\subsection{Holomorphic symplectic varieties} 

Holomorphic symplectic manifolds are Calabi--Yau manifolds admitting a holomorphic, everywhere non-degenerate two-form. Among them, the class of \emph{irreducible holomorphic symplectic manifolds}, i.e. compact K\"ahler simply connected holomorphic symplectic manifolds $X$ such that $H^{2,0}(X)$ is one-dimensional, has been extensively studied over the last few decades as they are building blocks for compact K\"ahler manifolds with trivial first Chern class, according to the Beauville--Bogomolov decomposition Theorem \cite[Thm. 2]{Beauville-Chern}. Despite the richness of their geometry, there is a severe lack of examples, which has led to a surge in research efforts to uncover new instances. Up to now, we only know two families in each even dimension \cite{Beauville-Chern}, and two exceptional families in dimension 10 and 6 \cite{O'Grady10, O'Grady6}, and they all arise from moduli spaces of sheaves on symplectic surfaces. 

In an attempt to generalise such a notion to the singular setting, Beauville introduced the class of \emph{holomorphic symplectic varieties}:

\begin{defn}[Holomorphic symplectic varieties, {\cite[Def. 1.1]{Beauville-symplectic-singularities}.}] A \emph{holomorphic symplectic structure} on a normal variety $X$ is a closed reflexive two-form $\omega\in H^0(X, \Omega_X^{[2]})$ which is non-degenerate at every point of $X_\text{reg}$. We say $(X,\omega)$ is a \emph{holomorphic symplectic variety} if for every resolution of singularities $f\colon \tilde{X}\to X$ the symplectic form $\omega\vert_{X_\text{reg}}$ extends to a holomorphic two-form on $\tilde{X}$.
\end{defn} 

\begin{rmk}[Symplectic singularities are canonical] \label{pull-back-morphism} By \cite[Thm. 6]{Namikawa}, a normal variety with a holomorphic symplectic structure is a holomorphic symplectic variety if and only if it is has rational Gorenstein singularities. In particular, by Kempf's criterion \cite[p. 50]{toroidal-embeddings} a holomorphic symplectic variety $X$ has canonical singularities, so that for every resolution of singularities $f\colon \tilde{X}\to X$ there exists a pull-back injective morphism
\begin{equation}\label{pull-back-morphism-eq}
f^\ast \omega_X \to \omega_{\tilde{X}},
\end{equation}
which extends the corresponding isomorphism over the smooth locus. 
\end{rmk}

\begin{ex} \label{examples-HSV} \begin{enumerate}
\item  Projective cones over canonical curves are singular symplectic surfaces: they can be regarded as one-point compactifications of the cotangent bundle obtained by collapsing the section at infinity of the natural projective completion. 
\item If $X$ is a symplectic quasi-projective variety and $G$ is a finite group of symplectic automorphisms of $X$, the normalisation of the irreducible components of the fixed locus of $G$ and the quotient $X/G$ are symplectic varieties \cite{Fujiki, Beauville-symplectic-singularities}.
\item The main source of compact examples is provided by moduli spaces of sheaves on symplectic surfaces, after Mukai's seminal work \cite{Mukai, Mukai-bundlesK3}.
\end{enumerate}
\end{ex}

A holomorphic symplectic variety $X$ always admits a finite stratification by \emph{symplectic leaves}, i.e. locally closed smooth symplectic subvarieties: 

\begin{prop}[Stratification by symplectic leaves, {\cite{Kaledin-symplectic-Poisson, Schedler-Kaplan}}] \label{stratification-symplectic-leaves} Let $X$ be a holomorphic symplectic variety. There is a finite filtration by reduced closed subschemes
\begin{equation}
X=X_0 \supseteq X_1 \supseteq \cdots \supseteq X_k
\end{equation}
inducing a stratification of $X$ with the following properties:
\begin{enumerate}[label={\upshape(\roman*)}]
\item $X_{i+1}=X_i^{\textnormal{sing}}$;
\item Each smooth stratum $X_{i}\setminus X_{i+1}\subseteq X$ is a symplectic subvariety;
\item $X$ is locally a product along each stratum: for all $x\in X_{i}\setminus X_{i+1}$ there exists a germ $(Z,0)$ of a holomorphic symplectic variety and an isomorphism of analytic germs of symplectic varieties
\begin{equation}\label{product_decomposition}
(X,x)\cong (X_{i}\setminus X_{i+1}, x)\times (Z,0).
\end{equation} 
\end{enumerate}
\end{prop}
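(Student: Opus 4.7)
The plan is to build the filtration inductively by setting $X_0 := X$ and $X_{i+1} := X_i^{\textnormal{sing}}$ endowed with its reduced scheme structure, so that property (i) is tautological. The whole argument then reduces to a single inductive assertion: if $Y$ is a holomorphic symplectic variety, then its singular locus $Y^{\textnormal{sing}}$ is a finite disjoint union of holomorphic symplectic subvarieties of $Y$, and the inclusion $Y^{\textnormal{sing}} \subseteq Y$ admits the local product description appearing in (iii). Granting this, finiteness of the chain is automatic: each $X_i$ is normal (symplectic varieties being normal by definition), so $\dim X_{i+1} \leq \dim X_i - 2$, and the process terminates within at most $\lfloor \dim X / 2 \rfloor$ steps.

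The first step is to convert the symplectic data into Poisson data. The form $\omega \in H^0(X_{\textnormal{reg}}, \Omega^2_{X_{\textnormal{reg}}})$ defines a Poisson bracket $\{-,-\}$ on $\mathcal{O}_{X_{\textnormal{reg}}}$ by inversion; since $X$ is normal and $X \setminus X_{\textnormal{reg}}$ has codimension at least two, this bracket extends uniquely to a Poisson structure on $\mathcal{O}_X$. The set-theoretic symplectic leaves of $(X, \{-,-\})$, defined as orbits of the pseudogroup generated by Hamiltonian flows of local sections, are the natural candidates for the smooth strata $X_i \setminus X_{i+1}$.

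The crucial input is Kaledin's decomposition theorem \cite{Kaledin-symplectic-Poisson}, which asserts that at every point $x \in X$ there exist a germ $(Z, 0)$ of a holomorphic symplectic variety with isolated singularity at $0$ and an analytic isomorphism of germs of Poisson varieties
\begin{equation*}
(X, x) \;\cong\; (L_x, x) \times (Z, 0),
\end{equation*}
where $L_x$ denotes the symplectic leaf through $x$. This single statement essentially delivers all three conclusions: each leaf is locally closed and smooth, its closure inherits a symplectic structure on its smooth locus (yielding (ii)), and the splitting itself is (iii). Matching the leaf stratification with the filtration by iterated singular loci is then a dimension argument, since the singular locus of $\overline{L_x}$ is precisely the union of the strictly lower-dimensional leaves meeting $\overline{L_x}$.

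The main obstacle is Kaledin's decomposition theorem itself: constructing the transverse slice $(Z,0)$ and verifying that the Poisson structure splits as a product is the hard analytic content. In \cite{Kaledin-symplectic-Poisson} this is carried out first in the formal setting, via a Darboux--Weinstein-type argument that exploits the non-degeneracy of the bracket in the leaf direction to build compatible systems of Hamiltonian flows, and then upgraded to the analytic category through Artin approximation. Once the local structure is in place, properties (i)--(iii) follow by Noetherian induction on dimension, and \cite{Schedler-Kaplan} provides the cleanest packaging of this bookkeeping.
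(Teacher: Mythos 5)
The paper itself contains no proof of this proposition: it is imported directly from Kaledin and Kaplan--Schedler, so the only comparison available is with the argument of those references, and your sketch follows it faithfully — extend the Poisson bracket across the singular locus by normality, invoke Kaledin's local product decomposition along the leaf (formal first, then upgraded to analytic germs via approximation, which is exactly the refinement the second reference packages), and identify the leaves with the smooth loci of the iterated singular loci by induction on dimension. Two inaccuracies are worth fixing, neither fatal since you never actually use them. First, Kaledin's decomposition theorem does not assert that the transverse slice $(Z,0)$ has an isolated singularity, and this is false in general: at a point of a deep stratum the singular locus of $Z$ contains slices to all intermediate strata. Already $(\mathcal{N}_{\leq n}(V),0)$ with $n\geq 2$ is a counterexample (the leaf through $0$ is a point, so the slice is the whole germ, whose singular locus $\mathcal{N}_{\leq n-1}(V)$ is positive-dimensional), and the same happens in this paper at $(\OO{C}^{\oplus 2},0)\in\mathcal{M}_C(2,\OO{C})$, where the slice is a ramified double cover of $\mathcal{N}_{\leq 3}(V)$. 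Second, it is not known a priori that each $X_i$ is normal, nor that $Y^{\textnormal{sing}}$ is a disjoint union of symplectic subvarieties: Kaledin's theorem endows the reduced singular locus with a Poisson structure and proves symplecticity of its \emph{normalization}, and only the open strata $X_i\setminus X_{i+1}$ are symplectic subvarieties. Consequently your codimension-two bound on the length of the filtration is unjustified as stated, though finiteness is immediate anyway, since $X_{i+1}$ is a proper closed subset of the reduced scheme $X_i$ and so the dimension strictly drops at each step.
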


Note that the last condition implies that any two points in a connected component of an open stratum have isomorphic analytic neighbourhoods in $X$. In other words, such connected components are the isosingular strata of $X$ \cite[Def. 0.1]{Ephraim}. 

\begin{rmk}[Normal flatness]\label{normal_flatness} Recall an immersion of schemes $i\colon Y\to X$ is normally flat if the normal cone $C_{Y/X}$ is flat over $Y$ \cite[App. III]{Hermann}. The product decomposition \eqref{product_decomposition} implies \textit{a fortiori} a product decomposition of the normal cone, so that $X$ is normally flat along each stratum \cite[App. III, Prop. 1.4.12]{Hermann}.
\end{rmk}

\subsection{Symplectic and crepant resolutions} Among the resolutions of singularities of a symplectic variety, a pivotal role is played by those which are themselves symplectic: 

\begin{defn}[Symplectic and crepant resolutions, {\cite[Def. 3.1]{Kaledin-crepant}}] If $(X,\omega)$ is a holomorphic symplectic variety, we say a projective resolution of singularities $f\colon \tilde{X}\to X$ is \emph{symplectic} if the symplectic form $\omega\vert_{X_\text{reg}}$ extends to a holomorphic symplectic two-form on $\tilde{X}$.

We say a projective resolution of singularities $f\colon \tilde{X}\to X$ is \emph{crepant} if the canonical morphism $f^\ast \omega_X \to \omega_{\tilde{X}}$ from Remark \ref{pull-back-morphism} is an isomorphism. 
\end{defn}

Note that the injective morphism $f^\ast \omega_X\to \omega_{\tilde{X}}$ defines an effective Cartier divisor $E$ on $\tilde{X}$, which describes the locus where \eqref{pull-back-morphism-eq} fails to be an isomorphism. We call the closed subset $f(E)\subseteq X$ the \emph{discrepancy centre} of $f$ \cite[\textsection 2.3]{Kollar-Mori}.

Clearly every symplectic resolution $f\colon \tilde{X} \to X$ is crepant, and hence in particular $\omega_{\tilde{X}}\cong \OO{\tilde{X}}$. The converse is also true: if $f\colon \tilde{X}\to X$ is crepant, then the extension of $\Lambda^{\dim X} \omega\vert_{X_\text{reg}}$ to $\tilde{X}$ has no zeros, and hence $\omega \vert_{X_\text{reg}}$ extends to a symplectic form on $\tilde{X}$ \cite[Prop. 3.2]{Kaledin-crepant}.

\begin{rmk}[Trivial canonical bundle implies crepant] \label{trivial-omega-crepant} Note that if $f\colon \tilde{X}\to X$ is a resolution of singularities of a holomorphic symplectic variety, it is enough to have \emph{any} isomorphism $\omega_{\tilde{X}}\cong \OO{\tilde{X}}$ in order for $f$ to be crepant. Indeed, the map $f^\ast \omega_X\to \omega_{\tilde{X}}$ is simply a function on $\tilde{X}$ which is non-zero over the smooth locus of $X$. By normality, it extends to a non-zero function on $X$ and hence it is non-zero on $\tilde{X}$ as well. 
\end{rmk}

\begin{rmk}[Being a crepant resolution is local over the base] \label{crepancy-local} Note that given a resolution of singularities $f\colon \tilde{X}\to X$ of a holomorphic symplectic variety, being crepant is local -- both Zariski and analytically -- over the base, and by Remark \ref{trivial-omega-crepant} it is equivalent to $\omega_{\tilde{X}}$ being locally trivial over $X$. 

If now $f\colon \tilde{X}\to X$ is \emph{any} projective birational morphism, it follows that being a crepant resolution is also local over the base. Indeed, if $f$ is a crepant resolution around every point of $X$, it follows in particular that $\tilde{X}$ is smooth, so that the morphism $f^\ast \omega_X\to \omega_{\tilde{X}}$ is well-defined and is locally -- hence globally -- an isomorphism.
\end{rmk}

\section{The local model and its resolution}\label{section_local_model}

In this Section we study the symplectic geometry of a specific class of nilpotent orbits closures $\mathcal{N}_{\leq r}(V)$ of rank $r\geq 0$ in the symplectic Lie algebra $\splie(V)$ of a symplectic vector space $(V,\omega)$, showing they admit a symplectic resolution given by the blow-up of the reduced singular locus when $2r=\dim V$ (Theorem \ref{blow-up-nilpotent-cone}).

\subsection{Symplectic nilpotent orbits closures are symplectic varieties}

Let $(V,\omega)$ be a $2n$-dimensional complex symplectic vector space. We denote by $\SP(V)\subseteq\Aut (V)$ the group of linear automorphisms of $V$ preserving the symplectic form $\omega$, i.e. $A\in \Aut (V)$ belongs to $\SP (V)$ if and only if 
\begin{equation}
\omega(Av, Aw)= \omega(v,w),  \text{ for all } v,w\in V.
\end{equation}
Similarly, we denote by $\splie(V)\subseteq\End (V)$ the associated Lie algebra, i.e. the set of endomorphisms $B\in \End(V)$ satisfying 
\begin{equation}\label{splie-condition}
\omega(Bv, w)= -\omega(v,Bw),  \text{ for all } v,w\in V.
\end{equation}

We identify $\splie(V)$ with its dual Lie algebra $\splie(V)^\ast$ by means of the Killing form, so that we can regard adjoint orbits -- which in this case are simply conjugacy classes under the action of $\SP(V)$ -- as holomorphic symplectic manifolds via the Kirillov--Kostant--Souriau symplectic form (see \cite[Ch. 1]{Ginzburg}). \\

For any integer $0\leq r\leq n$ we will be interested in the subvariety of $\splie(V)$ 
\begin{equation}
\mathcal{N}_r(V) \coloneqq \{ B \in \splie(V) \mid B^2=0 \text{ and } \operatorname{rk}B=r\}
\end{equation}
given by 2-nilpotent elements of rank $r$. It is an easy exercise in linear algebra to see that $\mathcal{N}_r(V)$ is a single adjoint orbit and hence a holomorphic symplectic manifold. It is locally closed in $\splie(V)$, and its closure is given by  
\begin{equation}\label{2-nilpotent-orbits-closure-defn}
\mathcal{N}_{\leq r}(V) \coloneqq \overline{\mathcal{N}}_{r}(V) = \bigcup_{r' \leq r} \mathcal{N}_{r'}(V).
\end{equation}

The closure $\mathcal{N}_{\leq r}(V)$ of the 2-nilpotent orbit of rank $r$ is a $\mathbb{Q}$-factorial symplectic variety of dimension
\begin{equation}
\dim \mathcal{N}_{\leq r}(V) = r(2n-r+1),
\end{equation} 
whose singular locus is given by $\mathcal{N}_{\leq r-1}(V)\subseteq \mathcal{N}_{\leq r}(V)$ \cite{Kraft-Procesi, Panyushev}.

\subsection{Isotropic and Lagrangian Grassmannians} 
Let $(V,\omega)$ be a $2n$-dimensional complex symplectic vector space. For a linear subspace $W\subseteq V$ we denote by $W^\perp \subseteq V$ its orthogonal with respect to $\omega$, i.e.
\begin{equation}
W^\perp \coloneqq \{v\in V \mid \omega(v,w)=0 \textnormal{ for all } w\in W \}. 
\end{equation}

Recall a subspace $W\subseteq V$ is \emph{isotropic} if $W\subseteq W^\perp$ -- equivalently $\omega$ vanishes on $W$.  Any isotropic subspace $W$ has $\dim W\leq n$. We say an isotropic subspace $W\subseteq V$ is \emph{Lagrangian} if it has maximal dimension $n$, so that $W=W^\perp$. \\

Given $0< r \leq 2n$ we denote by $\Grass(r,V)$ the ordinary Grassmannian of $r$-dimensional subspaces of $V$ and by $\Lambda(r,V)\subseteq \Grass(r,V)$ the Grassmannian of isotropic $r$-dimensional subspaces of $V$. It is a smooth submanifold of $\Grass(r,V)$ of dimension
\begin{equation}\label{dimension-isotropic-grass}
\dim \Lambda(k,V) = 2nr-\frac{3r^2-r}{2}.
\end{equation}
Note that $\Lambda(n,V)$ is the Lagrangian Grassmannian and $\Lambda(r,V)$ is empty for $r>n$.  We denote by $\mathcal{U}$ (resp. $\mathcal{Q}$) the universal subbundle (resp. quotient bundle) of $V\otimes \OO{\Grass(r,V)}$ and we use the same notation for their restriction to $\Lambda(r,V)\subseteq \Grass(r,V)$. \\

Since $\Lambda(r,V)\subseteq \Grass(r,V)$ is cut out by a regular section of $\Lambda^2 \mathcal{U}^\ast$, given by the restriction of the symplectic form $\omega$ to the universal subbundle $\mathcal{U}$, we get an exact sequence
\begin{equation}
\begin{tikzcd}[column sep=small]
0 \ar[r] & \Lambda^2 \mathcal{U}  \ar[r] & \mathcal{U} \otimes \mathcal{Q}^\ast \ar[r] & \Omega_{\Lambda(r,V)}\ar[r] & 0
\end{tikzcd}
\end{equation} 
In the Lagrangian case, we have $\mathcal{U}=\mathcal{U}^\perp$. Hence, the identification $\mathcal{Q}^\ast \cong \mathcal{U}^\perp$ induced by the symplectic form gives $\Omega_{\Lambda(n,V)}  \cong \operatorname{Sym}^2 \mathcal{U}$.

\subsection{Resolutions of symplectic 2-nilpotent orbits} By means of the universal subbundle of the Grassmannian of $r$-dimensional isotropic subspaces of $V$, one can write down an explicit resolution of singularities of the closure $\mathcal{N}_{\leq r}(V)$:

\begin{lem}[Resolution of symplectic 2-nilpotent orbits]\label{resolution-2nilpotent-cone} Let $0< r\leq n$ be an integer. The closure $\mathcal{N}_{\leq r}(V)$ of the 2-nilpotent orbit of rank $r$ in $\splie(V)$ admits a resolution of singularities given by the total space of $\Sym^2 \mathcal{U}$, where $\mathcal{U}\subseteq V\otimes \OO{\Lambda(r,V)}$ is the universal subbundle of the isotropic Grassmannian $\Lambda(r,V)$. 
\end{lem}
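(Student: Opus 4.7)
The plan is to exhibit an incidence variety over $\Lambda(r,V)$ realizing $\Sym^2 \mathcal{U}$ as a vector bundle whose total space resolves $\mathcal{N}_{\leq r}(V)$. The key observation is that for any $B\in\splie(V)$ with $B^2=0$, the image $\im B$ is automatically isotropic: if $v,w\in V$, then $\omega(Bv,Bw) = -\omega(v,B^2 w) = 0$. Hence the assignment $B\mapsto \im B$ sends a rank-$r$ element of $\mathcal{N}_{\leq r}(V)$ into $\Lambda(r,V)$, and suggests that the natural resolution should parametrize pairs $(W,B)\in\Lambda(r,V)\times\splie(V)$ with $\im B\subseteq W$.

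First I would set
\[
\widetilde{X} \coloneqq \bigl\{(W,B)\in \Lambda(r,V)\times\splie(V) \,\bigl|\, \im B \subseteq W\bigr\}
\]
and identify it with the total space of $\Sym^2 \mathcal{U}$ by computing the fibre over a fixed isotropic $W$. The symplectic condition \eqref{splie-condition} applied with $w\in W$ and $Bv\in W\subseteq W^\perp$ forces $W\subseteq \ker B$; applying it a second time with $v\in W^\perp$ forces $W^\perp\subseteq\ker B$ as well (so the condition $B^2=0$ holds automatically). Hence $B$ factors as $V/W^\perp\to W$, and via the symplectic identification $V/W^\perp\cong W^\ast$ it corresponds to some $\widetilde{B}\in W\otimes W$. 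A direct check reads
\[
\langle \xi,\widetilde{B}\eta\rangle = \omega(v_\xi, Bv_\eta) = -\omega(Bv_\xi, v_\eta) = \langle \eta,\widetilde{B}\xi\rangle,
\]
so $\widetilde{B}\in\Sym^2 W$. This gives a canonical isomorphism between the fibre over $W$ and $\Sym^2 W$, identifying $\widetilde{X}$ with the total space of $\Sym^2\mathcal{U}$.

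Next I would analyze the projection $f\colon \widetilde{X}\to \splie(V)$. Its image lies in $\mathcal{N}_{\leq r}(V)$: for $(W,B)\in\widetilde{X}$ we have $B^2=0$ and $\rk B\leq \dim W = r$. Conversely, for any $B\in\mathcal{N}_{\leq r}(V)$ the image $\im B$ is isotropic of dimension $\leq r$, hence can be extended to some $W\in\Lambda(r,V)$ with $(W,B)\in\widetilde{X}$, so $f$ is surjective onto $\mathcal{N}_{\leq r}(V)$. Over the open stratum $\mathcal{N}_r(V)\subseteq \mathcal{N}_{\leq r}(V)$ the subspace $W$ is forced to equal $\im B$, so $f$ is a bijection there; combined with smoothness of both sides and dimension count
\[
\dim \widetilde{X} = \Bigl(2nr - \tfrac{3r^2-r}{2}\Bigr)+\tfrac{r(r+1)}{2} = r(2n-r+1) = \dim \mathcal{N}_{\leq r}(V),
\]
this gives an isomorphism over the open orbit. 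Finally, properness of $f$ follows from the factorisation $\widetilde{X}\hookrightarrow \Lambda(r,V)\times\splie(V)$ and projectivity of $\Lambda(r,V)$, so $f$ is a projective birational morphism from a smooth variety, i.e. a resolution of singularities.

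The main obstacle is the linear-algebra identification of the fibre with $\Sym^2 W$ — namely showing that the symplectic condition both forces $B^2=0$ automatically once $\im B\subseteq W$ is isotropic and produces a \emph{symmetric} tensor in $W\otimes W$. Once this is in place, the rest is a formality: surjectivity follows from isotropy of $\im B$, and the isomorphism over the top orbit together with properness and smoothness of $\widetilde{X}$ upgrades $f$ to a resolution in the required sense.
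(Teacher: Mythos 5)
Your proof is correct and follows essentially the same route as the paper: the same incidence variety $\{(B,W)\mid \im B\subseteq W\}$ over $\Lambda(r,V)$, the same use of $\ker B=(\im B)^\perp$ to get the chain $\im B\subseteq W\subseteq W^\perp\subseteq\ker B$, and the same linear-algebra check identifying the fibre with $\Sym^2 W$ inside $W\otimes W$. The only difference is that you spell out surjectivity, properness, the dimension count and birationality over the open orbit, which the paper leaves implicit.
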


\begin{proof} One considers the incidence variety
\begin{equation}\label{total-space-resolution-nilpotent-orbits}
\tilde{\mathcal{N}}_{\leq r}(V) \coloneqq \left\lbrace (B, W)\in \mathcal{N}_{\leq r}(V)\times \Lambda(r,V) \mid \im B \subseteq W  \right\rbrace,
\end{equation}
with projections $f$ and $g$ onto the two factors. Note that since $W$ is isotropic and $B\in \splie(V)$, one has $\ker B= (\im B)^\perp$, so that the condition $\im B \subseteq W$ is equivalent to 
\begin{equation}\label{condition-equivalent-resolution}
\im B \subseteq W  \subseteq W^\perp \subseteq \ker B. 
\end{equation}
The projection $f$ is one-to-one over the rank $r$ locus, as $\im B$ is itself in $\Lambda(r,V)$. 

Now it is easy to see that  $g\colon  \tilde{\mathcal{N}}_{\leq r}(V) \to \Lambda(r,V)$ is given by the total space of the bundle $\Sym^2\mathcal{U}$. Indeed, note that by \eqref{condition-equivalent-resolution} there is a natural closed immersion
\begin{equation}\label{subbundle-resolution}
\begin{split}
\tilde{\mathcal{N}}_{\leq r}(V) & \to \operatorname{Tot}\left(\mathcal{H}om(\mathcal{U}^\ast, \mathcal{U})\right) \\
(B,W) & \mapsto \tilde{B}\colon V/W^\perp \cong W^\ast\to W,
\end{split}
\end{equation}
where 
\begin{equation}
\begin{split}
\tilde{B}\colon W^\ast & \to W \\
f=\omega(v,-) & \mapsto Bv.
\end{split}
\end{equation}
Now such a $\tilde{B}\in \Hom(W^\ast, W)\cong W\otimes W$ lies in $\Sym^2 W$ if and only if for all $f,g\in W^\ast$ one has $g(\tilde{B}(f))=f(\tilde{B}(g))$. Writing $f=\omega(v,-)$ and $g=\omega(w, -)$ for suitable $v,w\in V$, this is equivalent to
\begin{equation}
\omega(w,Bv)=\omega(v,Bw)=-\omega(Bw, v),
\end{equation}
which is equivalent to $B\in \splie(V)$. Hence we deduce that $\tilde{\mathcal{N}}_{\leq r}(V)$ and $\Sym^2 \mathcal{U}$ agree as subbundles of $\HHom(\mathcal{U}^\ast, \mathcal{U})\cong \mathcal{U}\otimes \mathcal{U}$ via \eqref{subbundle-resolution}.
\end{proof}

Now we want to understand in which cases the resolution given by Lemma \ref{resolution-2nilpotent-cone} is crepant, and provide an intrinsic description of it. \\

We will need a technical result first. Recall that given a morphism of schemes $f\colon X\to Y$, we say a line bundle $\xi \in \Pic(X)$ is $f$-\emph{very ample} if there is an immersion $i\colon X\hookrightarrow \mathbb{P}^m_Y$ over $Y$ such that $\xi \cong i^\ast \OO{\mathbb{P}^m_Y}(1)$ \cite[\textsection II.5]{Hartshorne}. 

\begin{prop}\label{curious-property-blow-up} Let $f\colon X \to Y$ be a projective birational morphism of algebraic varieties. Assume there is a closed subscheme $Z\subseteq Y$ such that the inverse image ideal sheaf $f^{-1}\mathcal{I}_Z \cdot \OO{X}$ is an invertible sheaf which is $f$-very ample and the natural map
\begin{equation}\label{push-of-ideal-exc-divisor}
\mathcal{I}_Z \to f_\ast \left( f^{-1}\mathcal{I}_Z \cdot \OO{X} \right)
\end{equation}
is an isomorphism. Then the birational morphism
\begin{equation}\label{universal-property}
\begin{tikzcd}[column sep=small]
X \ar[rr]\ar[dr] & & \operatorname{Bl}_Z Y\ar[dl] \\
& Y &
\end{tikzcd}
\end{equation}
induced by the universal property is an isomorphism. 
\end{prop}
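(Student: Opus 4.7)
My plan is to realise both $X$ and $Y':=\operatorname{Bl}_Z Y$ as closed subschemes of the projective bundle $\mathbb{P}_Y(\mathcal{I}_Z)=\operatorname{Proj}_Y(\operatorname{Sym}^\bullet \mathcal{I}_Z)$ and then prove that they coincide as such. Recall that $Y'=\operatorname{Proj}_Y(\bigoplus_{n\geq 0}\mathcal{I}_Z^n)$ sits in $\mathbb{P}_Y(\mathcal{I}_Z)$ as a closed subscheme $\jmath\colon Y'\hookrightarrow\mathbb{P}_Y(\mathcal{I}_Z)$, induced by the multiplication map $\operatorname{Sym}^\bullet \mathcal{I}_Z \to \bigoplus_n\mathcal{I}_Z^n$ and cut out by the graded ideal $K:=\ker(\operatorname{Sym}^\bullet \mathcal{I}_Z \to \bigoplus_n\mathcal{I}_Z^n)$. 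On the other side, the invertibility of $\mathcal{L}=f^{-1}\mathcal{I}_Z\cdot \OO{X}$ provides a canonical surjection $f^\ast \mathcal{I}_Z\to \mathcal{L}$, hence a morphism $\iota\colon X\to\mathbb{P}_Y(\mathcal{I}_Z)$ over $Y$ with $\iota^\ast \OO{}(1)\cong \mathcal{L}$. Combining $f$-very ampleness of $\mathcal{L}$ with the identification $\mathcal{I}_Z\cong f_\ast\mathcal{L}$ from the hypothesis, I would identify the target with $\mathbb{P}_Y(f_\ast\mathcal{L})$ and conclude that $\iota$ is a closed immersion.

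Next, I would check that $\iota$ factors through $\jmath$ by showing $f^\ast K$ maps to zero in the graded algebra $\bigoplus_n \mathcal{L}^n$. For each $n\geq 0$, the map $f^\ast \operatorname{Sym}^n \mathcal{I}_Z \to \mathcal{L}^n$ induced by $\iota$ sends a symmetric product of sections to their ordinary product inside the ring $\OO{X}$; since $\mathcal{L}^n=f^{-1}\mathcal{I}_Z^n\cdot \OO{X}$, this factors as the composition
\[
f^\ast \operatorname{Sym}^n \mathcal{I}_Z \longrightarrow f^\ast \mathcal{I}_Z^n \longrightarrow \mathcal{L}^n,
\]
where the first arrow is the pullback of $\operatorname{Sym}^n \mathcal{I}_Z\to \mathcal{I}_Z^n$ and the second is the defining surjection for $\mathcal{L}^n$. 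This kills $f^\ast K$, producing a closed immersion $g\colon X\to Y'$ with $\jmath\circ g=\iota$; by the uniqueness in the universal property of the blow-up, this $g$ agrees with the morphism in \eqref{universal-property}.

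Finally, I would show $g$ is surjective, hence an isomorphism. The blow-up $Y'$ of the integral variety $Y$ along the proper closed subscheme $Z$ is itself integral of the same dimension as $X$. Because $\pi\colon Y'\to Y$ is an isomorphism over $Y\setminus Z$ and $f$ is birational, $g$ restricts to an isomorphism on a dense open of $X$. A closed immersion with dense image in an integral scheme is necessarily surjective, so $g$ is an isomorphism.

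The step I expect to be the most delicate is the claim that $\iota\colon X\to \mathbb{P}_Y(\mathcal{I}_Z)$ is a closed immersion: $f$-very ampleness of $\mathcal{L}$ only produces a closed immersion into \emph{some} projective bundle $\mathbb{P}_Y(\mathcal{E})$, and the isomorphism $\mathcal{I}_Z\cong f_\ast \mathcal{L}$ must be exploited to pin down the canonical choice $\mathbb{P}_Y(f_\ast \mathcal{L})=\mathbb{P}_Y(\mathcal{I}_Z)$ and verify the closed immersion property there.
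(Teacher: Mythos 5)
Your argument is correct, and it reaches the conclusion by a related but genuinely different mechanism than the paper's. The paper adapts the proof of Hartshorne, Thm.\ II.7.17: it identifies $X$ with $\mathbf{Proj}_Y \bigoplus_{k\geq 0} f_\ast \mathcal{I}_E^{k}$, where $\mathcal{I}_E=f^{-1}\mathcal{I}_Z\cdot\mathcal{O}_X$, uses the $f$-ampleness of $\mathcal{I}_E$ together with the degree-one isomorphism \eqref{push-of-ideal-exc-divisor} to get surjectivity of $\mathcal{I}_Z^{k}\to f_\ast\mathcal{I}_E^{k}$ for $k\gg 0$, and concludes that the map in \eqref{universal-property} is a birational closed immersion into the integral scheme $\operatorname{Bl}_Z Y$, hence an isomorphism. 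You never touch the section algebra in high degrees: you work entirely in degree one, embedding both $X$ and $\operatorname{Bl}_Z Y$ into $\mathbb{P}_Y(\mathcal{I}_Z)=\operatorname{Proj}_Y\operatorname{Sym}^{\bullet}\mathcal{I}_Z$ and comparing them there, before finishing with the same ``birational closed immersion into an integral scheme'' step (note that invertibility of $\mathcal{L}$ forces $\mathcal{I}_Z\neq 0$, so the blow-up is indeed integral and birational to $Y$). What your route buys is avoiding the $k\gg 0$ surjectivity statement; what it costs is exactly the step you flag: that $f$-very ampleness of $\mathcal{L}$ plus $\mathcal{I}_Z\cong f_\ast\mathcal{L}$ makes $\iota\colon X\to\mathbb{P}_Y(\mathcal{I}_Z)$ a closed immersion. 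This is true and standard (EGA II, 4.4.4): if $i\colon X\hookrightarrow\mathbb{P}^m_Y$ realises the very ampleness, let $\mathcal{E}\subseteq f_\ast\mathcal{L}$ be the image of the adjoint map $\mathcal{O}_Y^{m+1}\to f_\ast\mathcal{L}$; then $X\to\mathbb{P}_Y(\mathcal{E})$ is a closed immersion since composing with $\mathbb{P}_Y(\mathcal{E})\hookrightarrow\mathbb{P}^m_Y$ recovers $i$ (closed because $f$ is proper), the inclusion $\mathcal{E}\subseteq f_\ast\mathcal{L}$ yields a morphism to $\mathbb{P}_Y(\mathcal{E})$ on the open subset of $\mathbb{P}_Y(f_\ast\mathcal{L})$ containing $\iota(X)$ where the composite evaluation is surjective, and cancellation for separated morphisms plus properness of $f$ upgrade $\iota$ to a closed immersion. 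With that reference or argument supplied, the remaining steps of your proof (the factorisation through the graded ideal $K$, the identification with the universal-property map, and the surjectivity of a closed immersion with dense image) are all sound.
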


\begin{proof} This is a variant of the proof of \cite[Thm. II.7.17]{Hartshorne}. As in  \textit{ibid.}, our hypotheses ensure that $X \cong \mathbf{Proj}_Y \bigoplus_{k\geq 0} f_\ast \mathcal{I}_E^k, $ where $\mathcal{I}_E$ is the ideal sheaf of the scheme-theoretic inverse image of $Z$. Now the $f$-ampleness of $\mathcal{I}_E$, together with the assumption \eqref{push-of-ideal-exc-divisor} guarantee that, for $k\gg 0$, the natural map $\mathcal{I}_Z^k\to f_\ast \mathcal{I}_E^k$ is surjective. Hence, the induced morphism \eqref{universal-property} is a birational closed immersion. Since $Y$, and hence $\operatorname{Bl}_Z Y$, is integral, it is an isomorphism. 
\end{proof}

Now we are finally ready to state the main result about the existence of symplectic resolutions for symplectic 2-nilpotent orbits:

\begin{thm}[Symplectic resolution of symplectic 2-nilpotent orbits]\label{blow-up-nilpotent-cone} Let $0< r\leq n$ be an integer. 
When $r<n$ the closure $\mathcal{N}_{\leq r}(V)$ of the 2-nilpotent orbit of rank $r$ in $\splie(V)$ does not admit any symplectic resolution.

When $r=n$, the 2-nilpotent cone $\mathcal{N}_{\leq n}(V)$ admits a symplectic resolution given by the total space of the cotangent sheaf of the Lagrangian Grassmannian $\Lambda(n, V)$. Moreover such a resolution is isomorphic to the blow-up of $\mathcal{N}_{\leq n}(V)$ along the reduced singular locus $\mathcal{N}_{\leq n-1}(V)$.
\end{thm}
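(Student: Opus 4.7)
The plan is to treat the two parts separately.

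For the non-existence assertion when $r < n$, I would invoke Fu's classification \cite{Fu} of nilpotent orbit closures in classical simple Lie algebras admitting a symplectic resolution. In type $C_n$, the 2-nilpotent orbit $\mathcal{N}_r(V)$ corresponds to the partition $(2^r, 1^{2(n-r)})$ of $2n$, and Fu's criterion singles out $r = n$ (the partition $(2^n)$) as the only value in the stated range for which $\overline{\mathcal{N}}_{r}(V)$ carries a symplectic resolution.

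For the case $r = n$, I would start from Lemma \ref{resolution-2nilpotent-cone}, which produces a resolution $f \colon \operatorname{Tot}(\operatorname{Sym}^2 \mathcal{U}) \to \mathcal{N}_{\leq n}(V)$ over the Lagrangian Grassmannian. The isomorphism $\Omega_{\Lambda(n,V)} \cong \operatorname{Sym}^2 \mathcal{U}$ recalled just before the lemma identifies this total space with $T^{\ast}\Lambda(n,V)$, which carries the canonical Liouville symplectic form and in particular has trivial canonical bundle. By Remark \ref{trivial-omega-crepant}, $f$ is therefore crepant, hence symplectic.

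To identify this resolution with the blow-up of $\mathcal{N}_{\leq n}(V)$ along the reduced singular locus $Z = \mathcal{N}_{\leq n-1}(V)$, I would apply Proposition \ref{curious-property-blow-up}. Writing $\pi \colon T^{\ast}\Lambda(n,V) \to \Lambda(n,V)$ for the bundle projection and viewing the tautological section $\tilde B \in \operatorname{Sym}^2 \mathcal{U}$ as a symmetric map $\mathcal{U}^{\ast} \to \mathcal{U}$, the scheme-theoretic inverse image $f^{-1}(Z)$ is the fibrewise rank-degeneracy locus, cut out by the relative determinant $\det \tilde B$, a section of $\pi^{\ast}(\det \mathcal{U})^{\otimes 2}$. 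Hence $f^{-1}\mathcal{I}_Z \cdot \mathcal{O}_X$ is the invertible sheaf $\pi^{\ast}(\det \mathcal{U}^{\ast})^{\otimes 2}$, which is $f$-very ample since $\det \mathcal{U}^{\ast}$ is the ample generator of $\operatorname{Pic}\Lambda(n,V)$.

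The main obstacle will be verifying condition $(3)$ of Proposition \ref{curious-property-blow-up}, namely that $\mathcal{I}_Z \to f_{\ast}(f^{-1}\mathcal{I}_Z \cdot \mathcal{O}_X)$ is an isomorphism. Since $f$ is birational and proper with $Y = \mathcal{N}_{\leq n}(V)$ normal, Zariski's Main Theorem gives $f_{\ast}\mathcal{O}_X = \mathcal{O}_Y$; pushing forward the short exact sequence $0 \to f^{-1}\mathcal{I}_Z \cdot \mathcal{O}_X \to \mathcal{O}_X \to \mathcal{O}_{f^{-1}(Z)} \to 0$ reduces the claim to showing that the scheme-theoretic image of $f^{-1}(Z)$ in $Y$ equals the reduced subscheme $Z$. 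This in turn follows from the fact that the relative determinant is an irreducible polynomial with simple generic zero, so that $f^{-1}(Z)$ is a reduced Cartier divisor on $X$, combined with the normality of $\mathcal{N}_{\leq n-1}(V)$ established in \cite{Kraft-Procesi}.
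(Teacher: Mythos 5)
Your proposal is correct, and for the $r=n$ case it follows the same skeleton as the paper (Lemma \ref{resolution-2nilpotent-cone}, Remark \ref{trivial-omega-crepant}, Proposition \ref{curious-property-blow-up}), but the three verifications are carried out by genuinely different means. For $r<n$ you quote Fu's classification \cite{Fu}, which works provided you actually check the combinatorial criterion (the orbit $(2^r,1^{2(n-r)})$ with $r<n$ is not Richardson); the paper instead gives a short self-contained argument: $\mathcal{N}_{\leq r}(V)$ is $\mathbb{Q}$-factorial with singular locus of codimension $2(n-r+1)\geq 4$, so any birational morphism onto it has divisorial exceptional locus, contradicting semi-smallness of symplectic resolutions \cite{Kaledin-symplectic-Poisson}. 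What your route buys is brevity via a citation; what the paper's buys is independence from the classification. For the $f$-very ampleness input to Proposition \ref{curious-property-blow-up}, you identify $\OO{\tilde{\mathcal{N}}_{\leq n}(V)}(-E)$ directly as $\pi^\ast(\det\mathcal{U}^\ast)^{\otimes 2}$ via the relative determinant of $\tilde B\colon \mathcal{U}^\ast\to\mathcal{U}$, whereas the paper pins down the twist $q=-2$ by adjunction along the $\mathbb{P}^1$-fibres over $\mathcal{N}_{n-1}(V)$; the conclusions agree. The one step you should make explicit is the claim that the inverse image ideal $f^{-1}\mathcal{I}_Z\cdot\OO{\tilde{\mathcal{N}}_{\leq n}(V)}$ coincides with the principal ideal $(\det\tilde B)$: one containment is automatic once the determinantal divisor is known to be reduced, and for the other one can pull back the maximal minors of the symmetric form $\omega(\cdot\,,B\,\cdot)$, which vanish on $\mathcal{N}_{\leq n-1}(V)$ (hence lie in $\mathcal{I}_Z$) and generate $(\det\tilde B)$ locally; the paper elides the same point when asserting that the preimage is an effective Cartier divisor. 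Finally, for the condition \eqref{push-of-ideal-exc-divisor} your argument via $f_\ast\OO{\tilde{\mathcal{N}}_{\leq n}(V)}=\OO{\mathcal{N}_{\leq n}(V)}$ and the reducedness of $E$ (so that the scheme-theoretic image of $E$ is the reduced $\mathcal{N}_{\leq n-1}(V)$) is if anything cleaner than the paper's, which applies Zariski's Main Theorem and Stein factorisation also to $E\to \mathcal{N}_{\leq n-1}(V)$ -- a map that is not birational but generically a $\mathbb{P}^1$-bundle, so that connectedness of fibres and reducedness of $E$ are implicitly used there anyway; note that the normality of $\mathcal{N}_{\leq n-1}(V)$ from \cite{Kraft-Procesi}, which you invoke, is not actually needed once $E$ is reduced.
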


\begin{proof}The first part follows immediately from a standard argument relying on the fact that the singular locus has codimension $>2$. Indeed, $\mathbb{Q}$-factoriality implies that the exceptional locus of any birational morphism to $\mathcal{N}_{\leq r}(V)$ must be pure of codimension one \cite[\textsection 1.40]{Debarre}, but symplectic resolutions are semi-small \cite[Lemma 2.11]{Kaledin-symplectic-Poisson}, so the singular locus must have codimension $\leq 2$. 

When $r=n$, the resolution
\begin{equation} \label{resolution-nilpotent-orbit-max-rank}
f\colon \tilde{\mathcal{N}}_{\leq n}(V) \to \mathcal{N}_{\leq n}(V)
\end{equation}
provided by \eqref{total-space-resolution-nilpotent-orbits} in Lemma \ref{resolution-2nilpotent-cone} actually coincides with the total space of the cotangent bundle of $\Lambda(n,V)$ and hence it is crepant by Remark \ref{trivial-omega-crepant}.

For the last statement, we want to make use of Proposition \ref{curious-property-blow-up}. Let $E\subseteq \tilde{\mathcal{N}}_{\leq n}(V)$ be the effective Cartier divisor given by the inverse image of the reduced singular locus $\mathcal{N}_{\leq n-1}(V)$ of $\mathcal{N}_{\leq n}(V)$. Since $\tilde{\mathcal{N}}_{\leq n}(V)$ is a vector bundle over the Lagrangian Grassmannian, the associated line bundle is given by
\begin{equation}
\OO{\tilde{\mathcal{N}}_{\leq n}(V)}(E) = g^\ast \OO{\Lambda(n,V)}(q),
\end{equation}
for some $q\in \mathbb{Z}$, where $\OO{\Lambda(n,V)}(1)$ is the Pl\"ucker line bundle on the Grassmannian $\Lambda(n,V)$. 

Note that by adjunction, since $\tilde{\mathcal{N}}_{\leq n}(V)$ has trivial canonical bundle, the dualising sheaf $\omega_E$ is given by $\OO{E}(E)=g^\ast \OO{\Lambda(n,V)}(q)\otimes \OO{E}$. Now consider the restriction $f\colon E \to \mathcal{N}_{\leq n-1}(V)$: it is a $\mathbb{P}^1$-bundle over the smooth locus $\mathcal{N}_{n-1}(V)$. Given $B\in \mathcal{N}_{ n-1}(V)$, once again by adjunction we get
\begin{equation}\label{canonical-fibre}
g^\ast \OO{\Lambda(n,V)}(q) \otimes \OO{f^{-1}(B)}= \omega_E \otimes \OO{f^{-1}(B)} = \omega_{f^{-1}(B)}.
\end{equation}

On the other hand, the fibre 
\begin{equation}
f^{-1}(B) = \{B\}\times   \{W \in \Lambda(n,V) \mid \operatorname{Im}B \subseteq W \subseteq \ker B\} \cong \mathbb{P}(\ker B/\operatorname{Im} B),
\end{equation}
is a linear subspace contained in $\Lambda(n,V)$ under the Pl\"ucker embedding, so that \eqref{canonical-fibre} implies $q=-2$ and hence the ideal sheaf of $E$ is $f$-very ample. 

Finally, since $\mathcal{N}_{\leq n-1}(V)$ and $\mathcal{N}_{\leq n}(V)$ are both normal, by Zariski's Main Theorem and Stein factorisation \cite[Cor. III.11.4-5]{Hartshorne} the natural maps
\begin{equation}
\OO{\mathcal{N}_{\leq n}(V)}\to f_\ast \OO{\tilde{\mathcal{N}}_{\leq n}(V)}, \quad \OO{\mathcal{N}_{\leq n-1}(V)}\to f_\ast \OO{E}
\end{equation}
are both isomorphisms. It follows that the map
\begin{equation}
 \mathcal{I}_{\mathcal{N}_{\leq n-1}(V)} \to f_\ast \mathcal{I}_E = f_\ast \left(f^{-1} \mathcal{I}_{\mathcal{N}_{\leq n-1}(V)} \cdot \OO{\tilde{\mathcal{N}}_{\leq n}(V)} \right)
\end{equation}
is also an isomorphism. 
\end{proof}

\section{Equivariant bundles on hyperelliptic curves}\label{section_equivariant_hyperelliptic}

In this Section we describe the action induced by the hyperelliptic involution on the moduli space of rank two bundles on a smooth hyperelliptic curve. 

\subsection{Equivariant sheaves} Assume $G$ is a finite group acting on a projective variety $X$. Recall in this setting a $G$-\emph{equivariant structure} on a sheaf $F$ on $X$ is a collection $\alpha\coloneqq \{\alpha_g\}_{g\in G}$ of morphisms of sheaves $\alpha_g \colon g^\ast F \to F$ for all $g\in G$ such that
\begin{enumerate}[label={\upshape(\roman*)}]
\item $\alpha_e=\ide_F$;
\item $\alpha_{g\cdot h}= \alpha_h \circ h^\ast \alpha_g$ for all $g,h\in G.$
\end{enumerate}
We say $(F,\alpha)$ is a $G$-\emph{equivariant sheaf} on $X$. 

When $F$ is locally free, the datum of a $G$-equivariant structure is the same as a lift of the $G$-action by bundle automorphisms to the total space $\operatorname{Tot}(F)$. When $F$ is an invertible sheaf, we often use $G$-\emph{linearisation} instead of $G$-equivariant structure. \\

Morphisms of $G$-equivariant sheaves are morphisms of sheaves which commute with the equivariant structure: we will denote by $\Coh^G(X)$ the category of $G$-equivariant coherent sheaves on $X$ -- all morphisms are required to be $\OO{X}$-linear here. It is an abelian category with enough injectives, equipped with internal hom and tensor product \cite[\textsection 5]{Tohoku}. Moreover, if $X$ and $Y$ are quasi-projective varieties equipped with a $G$-action and $f\colon X\to Y$ is a $G$-equivariant morphism, the pull-back and push-forward functors (assuming they are defined) lift to the equivariant categories
\begin{equation}
f^\ast \colon \Coh^G(Y) \to \Coh^G(X), \quad f_\ast \colon \Coh^G(X)\to \Coh^G(Y).
\end{equation}
In particular, by means of equivariant push-forward to the point we get an induced $G$-action on the cohomology groups $H^i(X, F)$ of a $G$-equivariant sheaf $(F,\alpha)$. Moreover, when the action of $G$ on $Y$ is trivial, taking $G$-invariants defines a subsheaf $f_\ast^G F\subseteq f_\ast F$.

\subsection{Hyperelliptic curves}  Let $C$ be a smooth curve of genus $g\geq 2$. Recall we say $C$ is \emph{hyperelliptic} if it is endowed with a two-to-one morphism 
\begin{equation}\label{hyperelliptic-map}
f\colon C\to \mathbb{P}^1,
\end{equation}
ramified along a set of $2g+2$ distinct points, defining the \emph{ramification divisor} $R\in \Div(C)$, whose image via $f$ is the \emph{branch divisor} $B\in \Div(\mathbb{P}^1)$. \\

Let $s\in H^0(\mathbb{P}^1, \OO{\mathbb{P}^1}(2g+2))$ be a section cutting out $B$ and let $L\coloneqq \OO{\mathbb{P}^1}(g+1)$ be the square-root of $\OO{\mathbb{P}^1}(2g+2)$.  Let $\pi\colon \operatorname{Tot}(L)\to C$ be the total space of $L$: the curve $C$ can be recovered uniquely up to isomorphism via the cyclic covering construction as the zero locus of the section
\begin{equation}
\lambda^2-\pi^\ast s \in H^0(\operatorname{Tot}(L), \pi^\ast L^2),
\end{equation}
where $\lambda\in H^0(\operatorname{Tot}(L), \pi^\ast L)$ is the tautological section. \\

The map \eqref{hyperelliptic-map} is then simply given by the restriction of $\pi$ to $C\subseteq \operatorname{Tot}(L)$ and we have  
\begin{equation}\label{Hurwitz-formula}
\begin{split}
f_\ast \OO{C} & \cong \OO{\mathbb{P}^1}\oplus L^{-1}, \\
\omega_C & \cong f^\ast \omega_{\mathbb{P}^1} \otimes \OO{C}(R).
\end{split}
\end{equation}

The involution on $\operatorname{Tot}(L)$ given by multiplication by $-1$ on the fibres of $L$ induces an involution $\sigma \in \Aut(C)$, which we will call the \emph{hyperelliptic involution}, which swaps the points in the fibres of $f$ and such that $C/ \langle \sigma \rangle \cong \mathbb{P}^1$ via $f$. 

\subsection{Equivariant bundles on hyperelliptic curves} We will be interested in the $\Ztwo$-action on $C$ defined by $\sigma\in \Aut(C)$. 
We start by describing the induced action on the Picard group:

\begin{lem}[Hyperelliptic involution on $\Pic(C)$]\label{action-on-Pic} For every line bundle $\xi \in \Pic(C)$ one has an isomorphism
\begin{equation}\label{formula-hyperelliptic-line-bundles}
\sigma^\ast \xi \cong \xi^{-1}\otimes (f^\ast \OO{\mathbb{P}^1}(1))^{\deg (\xi)}.
\end{equation}
In particular every $\xi \in \Pic^0(C)$ satisfies $\sigma^\ast \xi\cong \xi^{-1}$. Moreover, if $\eta\in \Pic^{2d}(C)$ satisfies $\eta^2\cong f^\ast \OO{\mathbb{P}^1}(d)$ for some $d\in \mathbb{Z}$, then $\sigma^\ast \eta\cong \eta$.
\end{lem}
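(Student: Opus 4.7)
The plan is to reduce the main isomorphism to a single divisor-theoretic identity on $C$, from which both remaining assertions follow by direct substitution. I would work at the level of Weil divisors, which is legitimate since every line bundle on the smooth curve $C$ is of the form $\OO{C}(D)$ for some divisor $D$.

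The starting point is the elementary identity
\begin{equation*}
p + \sigma(p) = f^{\ast}(f(p))
\end{equation*}
of divisors on $C$, valid for every closed point $p \in C$. Away from the ramification divisor this records that the scheme-theoretic fibre of $f$ over $f(p)$ consists of the two reduced points $\{p, \sigma(p)\}$; at a ramification point, where $\sigma(p) = p$, both sides equal $2p$, so the formula still holds. Writing $\xi = \OO{C}(D)$ with $D = \sum_{i} n_{i} p_{i}$ of total degree $d = \deg \xi$ and summing this identity along the support of $D$ yields
\begin{equation*}
D + \sigma^{\ast}D = f^{\ast}\bigl(\textstyle\sum_{i} n_{i} f(p_{i})\bigr),
\end{equation*}
where the divisor on the right has degree $d$ on $\mathbb{P}^{1}$. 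Translating back into line bundles gives
\begin{equation*}
\xi \otimes \sigma^{\ast}\xi \;\cong\; f^{\ast}\OO{\mathbb{P}^{1}}(d) \;\cong\; (f^{\ast}\OO{\mathbb{P}^{1}}(1))^{d},
\end{equation*}
and tensoring with $\xi^{-1}$ produces the desired formula \eqref{formula-hyperelliptic-line-bundles}.

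The degree-zero specialisation then follows by setting $d = 0$, so that the twist by $f^{\ast}\OO{\mathbb{P}^{1}}(1)^{d}$ disappears and $\sigma^{\ast}\xi \cong \xi^{-1}$. For the last assertion, substituting $\xi = \eta$ in the main formula and invoking the hypothesis $\eta^{2} \cong f^{\ast}\OO{\mathbb{P}^{1}}(d)$ causes the pulled-back twist to cancel against $\eta^{-1}$, leaving $\sigma^{\ast}\eta \cong \eta$. There is no real obstacle here: the entire argument hinges on the single fibre-wise identity above, and the rest is routine bookkeeping in $\Pic(C)$; the only point that deserves a line of justification is that the identity $p+\sigma(p)=f^{\ast}(f(p))$ continues to hold on the ramification locus.
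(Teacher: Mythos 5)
Your proof is correct and follows essentially the same route as the paper's: both rest on the single divisor identity $p+\sigma^\ast p=f^\ast(f(p))$ on $C$, valid also at ramification points. The only cosmetic difference is that the paper reduces to the case $\xi=\OO{C}(p)$ by writing $\xi$ as a difference of bundles with sections and inducting on the degree, whereas you apply the identity to an arbitrary divisor $D$ with $\xi\cong\OO{C}(D)$ and sum with multiplicities all at once -- the same additivity, just packaged without the reduction step.
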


\begin{proof} We can assume $H^0(C, \xi)\neq 0$ -- otherwise write $\xi= \xi_1\otimes \xi_2$ with $H^0(C, \xi_1)$ and $H^0(C, \xi_2^{-1})$ both non-zero and use additivity of \eqref{formula-hyperelliptic-line-bundles}.  We can then induct on $\deg(\xi)$, so it is enough to prove the claim for $\xi=\OO{C}(p)$ for a point $p\in C$. In this case, one has $p+\sigma^\ast p=f^\ast (f(p))\in \operatorname{Div}(C)$, so that
\begin{equation}
\OO{C}(p)\otimes \sigma^\ast \OO{C}(p)\cong f^\ast \OO{\mathbb{P}^1}(1). \qedhere
\end{equation}
\end{proof}

\begin{ex}[Equivariant structures on $\omega_{C}$]\label{equiv-omega-hyper} The natural equivariant structure on the canonical bundle $\omega_C$ induced by the functorial isomorphism $\sigma^\ast \Omega_C\cong \Omega_C$ induces a $(-1)$-action on the fibres over fixed points in $R$, and the corresponding action on global sections is given by $-\ide$. Whenever we consider $\omega_C$ as an equivariant bundle, we will consider it with this structure.

On the other hand, by \eqref{Hurwitz-formula}, $\omega_C\cong f^\ast \OO{\mathbb{P}^1}(g-1)$ is a pull-back, so it has a trivial $\Ztwo$-equivariant structure inducing the identity on the fixed fibres and the trivial action on global sections. 

In terms of the action on the total space, the two differ by the involution given by multiplication by $-1$ on the fibres of $\omega_C$, which corresponds to tensoring with the non-trivial equivariant structure on the structure sheaf. \end{ex}

Note that if $(E,\alpha)$ is an equivariant vector bundle on $C$, the cohomology groups $H^i(C, E)$ are endowed with a $G$-action, so that we can define the quantity
\begin{equation}
\chi^{\Ztwo}(E) \coloneqq \dim H^0(C,E)^{\Ztwo} - \dim H^1(C, E)^{\Ztwo}. 
\end{equation}

We have the following equivariant version of Riemann--Roch, which will allow us to compute the dimension of the fixed locus of the hyperelliptic  involution in the moduli space of bundles.

\begin{prop}[Equivariant Riemann--Roch {\cite[Prop. 2.2]{Desale-Ramanan}}]\label{equivariant-RR} Let $(E,\alpha)$ be an equivariant vector bundle on $C$. For $p\in R$, denote by $E_p^{\Ztwo}\subseteq E_p$ the $\alpha_p$-invariant subspace of the fibre $E_p$. Then
\begin{equation}
\chi^{\Ztwo}(E) = \frac{1}{2}\deg(E)-\rk(E)g+\frac{1}{2}\sum_{p\in R} \dim E_p^{\Ztwo}.
\end{equation}
\end{prop}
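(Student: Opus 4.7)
The approach is to split $\chi^{\Ztwo}(E)$ into an ordinary Euler characteristic and an equivariant Lefschetz trace, then evaluate each piece by Riemann--Roch and the Atiyah--Bott holomorphic (Woods Hole) fixed point formula respectively. For any finite-dimensional $\Ztwo$-representation $V$ decomposing as $V=V^+\oplus V^-$, one has
\[
\dim V^{\Ztwo} \;=\; \dim V^+ \;=\; \tfrac12\bigl(\dim V + \operatorname{tr}(\sigma\mid V)\bigr).
\]
Applying this identity to each $H^i(C,E)$ with its induced $\Ztwo$-action and taking the alternating sum yields
\[
\chi^{\Ztwo}(E) \;=\; \tfrac12\bigl(\chi(C,E) + L(\sigma,E)\bigr),
\]
where $L(\sigma,E)\coloneqq \operatorname{tr}(\sigma^\ast\mid H^0(C,E)) - \operatorname{tr}(\sigma^\ast\mid H^1(C,E))$ denotes the equivariant Lefschetz trace.

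The first summand is classical: ordinary Riemann--Roch on $C$ gives $\chi(C,E) = \deg E + \rk(E)(1-g)$. For the second, the plan is to invoke the holomorphic Lefschetz fixed point theorem. The fixed locus of $\sigma$ on $C$ consists exactly of the $2g+2$ ramification points $R$; at each $p\in R$ the cyclic-covering description of $C$ shows that $\sigma$ acts locally as $z\mapsto -z$ in a suitable coordinate, so $d\sigma_p=-\ide$ on $T_p C$ and $\det(\ide - d\sigma_p\mid T_p C)=2$. Consequently
\[
L(\sigma,E) \;=\; \sum_{p\in R} \frac{\operatorname{tr}(\alpha_p \mid E_p)}{2}.
\]
Since $\alpha_p$ is an involution on $E_p$, diagonalising into $\pm 1$ eigenspaces gives $\operatorname{tr}(\alpha_p\mid E_p) = 2\dim E_p^{\Ztwo} - \rk(E)$, and summing over the $|R|=2g+2$ ramification points yields $L(\sigma,E) = \sum_{p\in R}\dim E_p^{\Ztwo} - (g+1)\rk(E)$.

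Substituting both expressions into the averaging identity and simplifying produces the stated formula. The argument is essentially bookkeeping once the Lefschetz formula has been applied; the mild subtlety is verifying that the fixed points are transverse with $d\sigma_p=-\ide$, which is immediate from the cyclic-covering realisation of $\sigma$. An alternative algebro-geometric route would avoid Atiyah--Bott by pushing forward via $f\colon C\to\mathbb{P}^1$, writing $\chi^{\Ztwo}(E) = \chi(\mathbb{P}^1, (f_\ast E)^{\Ztwo})$, and analysing the invariant subbundle locally at the branch points; but the Lefschetz route is shorter and makes the contribution of each ramification point transparent.
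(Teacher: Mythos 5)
Your argument is correct, and the computation checks out: the averaging identity $\dim V^{\Ztwo}=\tfrac12(\dim V+\operatorname{tr}\sigma)$, ordinary Riemann--Roch $\chi(C,E)=\deg E+\rk(E)(1-g)$, and the holomorphic Lefschetz contribution $\sum_{p\in R}\tfrac12\operatorname{tr}(\alpha_p\mid E_p)$ with $\det(\ide-d\sigma_p\mid T_pC)=2$ combine exactly to the stated formula, and the verification that $d\sigma_p=-\ide$ at the $2g+2$ ramification points is immediate from the cyclic-covering description (locally $\sigma\colon\lambda\mapsto-\lambda$, with the fixed points isolated and nondegenerate, so Atiyah--Bott applies). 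Note, however, that the paper offers no proof of this statement: it is quoted verbatim from Desale--Ramanan, so there is no in-text argument to compare with. Your route via the Woods Hole fixed point formula is a legitimate self-contained derivation; the classical treatment (and the alternative you sketch at the end) instead descends to the quotient $\mathbb{P}^1$ and computes $\chi^{\Ztwo}(E)=\chi(\mathbb{P}^1,(f_\ast E)^{\Ztwo})$ by analysing the invariant direct image at the branch points, which is more elementary in that it avoids the fixed point theorem but requires a local study of invariant sections at the ramification points. Either way the bookkeeping is the same, and your version makes the per-point contribution $\dim E_p^{\Ztwo}$ most transparent.
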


\subsection{Moduli spaces of vector bundles} 
Let $C$ be a smooth curve of genus $g\geq 2$. For $(r,d)\in \mathbb{Z}_{>0}\times \mathbb{Z}$ we denote by $\mathcal{U}_C(r,d)$ the moduli space parametrising $S$-equivalence classes of slope-semistable vector bundles on $C$ with rank $r$ and degree $d$ \cite{Seshadri-unitary, Narasimhan-Ramanan, Seshadri}. It is a normal, irreducible, projective variety of dimension
\begin{equation}
\dim \mathcal{U}_C(r,d)= r^2(g-1)+1.
\end{equation}
We denote by  $\mathcal{U}^s_C(r,d)\subseteq \mathcal{U}_C(r,d)$ the smooth open subset parametrising isomorphism classes of stable vector bundles. 

Having fixed a degree $d$ line bundle $\xi\in \Pic^d(C)$, we can consider the fibre of the isotrivial determinant morphism
\begin{equation}
\det \colon \mathcal{U}_C(r,d) \to \Pic^d(C),
\end{equation}
which we will denote by $\mathcal{SU}_C(r,\xi)$. It parametrises $S$-equivalence classes of slope-semistable vector bundles of rank $r$ and fixed determinant $\xi$  and is a normal, irreducible, projective variety of dimension
\begin{equation}
\dim \mathcal{SU}_C(r,d)= (r^2-1)(2g-2).
\end{equation}
It contains a smooth open subset $\mathcal{SU}^s_C(r,\xi)\subseteq \mathcal{SU}_C(r,\xi)$ parametrising isomorphism classes of stable vector bundles \cite[\textsection 1.VI]{Seshadri}. \\

When the integers $r$ and $d$ are coprime, every semistable bundle is stable, so that both $\mathcal{U}_C(r,d)$ and $\mathcal{SU}_C(r,\xi)$ are smooth and projective \cite[Thm. 1.III.17]{Seshadri}. If $r$ and $d$ are not coprime, and $g\geq 3$ or $g=2$ and $r\geq 3$, then the smooth loci of $\mathcal{U}_C(r,d)$ and $\mathcal{SU}_{C}(r,\xi)$ are precisely the stable loci \cite[Thm. 1.V.45]{Seshadri}. When $g=2$, the moduli spaces $\mathcal{U}_C(2,0)$ and $\mathcal{SU}_C(2, \OO{C})$ are actually smooth -- the latter is isomorphic to $\mathbb{P}^3$ via the theta-map \cite{Narasimhan-Ramanan}. 

\begin{rmk}[Local triviality of determinant morphism]\label{triviality-determinant-morphism} Having fixed a line bundle $\xi\in \Pic^d(C)$, the determinant morphism $\det\colon \mathcal{U}_C(r,d)\to \Pic^d(C)$ becomes trivial after a finite \'etale cover. Indeed, we have the following cartesian diagram
\begin{equation} 
\begin{tikzcd}[column sep=2cm]
\mathcal{SU}_C(r, \xi)\times \Pic^d(C) \ar[r, "(- \otimes -) \otimes \xi^{-1}"] \ar[d] &  \mathcal{U}_C(r, d) \ar[d, "\det"] \\
\Pic^d(C) \ar[r, "(-)^r \otimes \xi^{1-r}"] & \Pic^d(C)
\end{tikzcd}
\end{equation}
where the horizontal arrows are Galois covers with Galois group $\Pic^0(C)[r]$. In particular, the moduli spaces $\mathcal{SU}_C(r, \xi)$ and $\mathcal{U}_C(r,d)$ are stably isosingular as in \cite[Def. 2.6]{Mirko}. 
\end{rmk}

We will be specifically interested in moduli spaces of rank two bundles on $C$, so we describe the structure of their singular locus explicitly:

\begin{prop}[Singularities of $\mathcal{U}_C(2,0)$ and $\mathcal{SU}_C(2, \OO{C})$, {\cite[Thm. 1.V.45]{Seshadri}}]\label{singularities-U-SU} Let $C$ be a smooth curve of genus $g\geq 3$. We have the following stratification of $\mathcal{U}_C(2,0)$:
\begin{equation}
\mathcal{U}_C(2,0) \supseteq \Sigma(\mathcal{U}_C(2,0)) \supseteq \Omega(\mathcal{U}_C(2,0)),
\end{equation}
where
\begin{equation}
\begin{split}
\Sigma(\mathcal{U}_C(2,0)) & \coloneqq \mathcal{U}_C(2,0)^{\textnormal{sing}}= \{\xi \oplus \xi' \mid \xi, \xi' \in \Pic^0(C) \}, \\
\Omega(\mathcal{U}_C(2,0))& \coloneqq  \Sigma(\mathcal{U}_C(2,0))^{\textnormal{sing}} =\{ \xi\oplus \xi \mid \xi \in \Pic^0(C)\},
\end{split}
\end{equation}
where we consider bundles up to $S$-equivalence. 

We have an analogous stratification for $\mathcal{SU}_C(2, \OO{C})$:
\begin{equation}
\mathcal{SU}_C(2, \OO{C}) \supseteq \Sigma(\mathcal{SU}_C(2, \OO{C})) \supseteq \Omega(\mathcal{SU}_C(2, \OO{C})),
\end{equation} 
where
\begin{equation}
\begin{split}
\Sigma(\mathcal{SU}_C(2, \OO{C})) & \coloneqq \mathcal{SU}_C(2,\OO{C})^{\textnormal{sing}}= \{\xi \oplus \xi^{-1} \mid \xi \in \Pic^0(C) \}, \\
\Omega(\mathcal{SU}_C(2, \OO{C}))& \coloneqq  \Sigma(\mathcal{SU}_C(2, \OO{C}))^{\textnormal{sing}} =\{ \xi\oplus \xi \mid \xi \in \Pic^0(C)[2] \},
\end{split}
\end{equation}
where we consider bundles up to $S$-equivalence. 
\end{prop}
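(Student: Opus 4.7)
The plan is to combine the GIT construction of the moduli spaces with Luna's étale slice theorem to obtain an analytic-local model at each polystable orbit, and then to compute Zariski tangent spaces and local quotients explicitly. First I would recall that $\mathcal{U}_C(2,0)$ (and similarly $\mathcal{SU}_C(2,\OO{C})$) is a GIT quotient of a smooth parameter scheme by a reductive group, so that its closed points parametrise $S$-equivalence classes of semistable bundles, with each such class containing a unique polystable representative. For rank two degree zero bundles, the non-stable polystable bundles are precisely the direct sums $\xi\oplus \xi'$ with $\xi,\xi'\in \Pic^0(C)$, intersected with the constraint $\xi\otimes \xi'\cong \OO{C}$ in the $\mathcal{SU}$-case, which yields the set-theoretic description of $\Sigma$ and $\Omega$.

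Next I would establish smoothness along the stable locus. For a stable bundle $E$, simplicity gives $\Hom(E,E)=\mathbb{C}$, and Riemann--Roch yields $\dim \Ext^1(E,E)=4g-3=\dim \mathcal{U}_C(2,0)$ (respectively $3g-3$ after restricting to traceless endomorphisms in the $\mathcal{SU}$-setting), matching the expected dimension so that the moduli space is smooth there. Conversely, at a strictly semistable polystable $F$, the stabiliser $\Aut(F)$ is non-trivial and Luna's slice theorem expresses an analytic neighbourhood as $\Ext^1(F,F)^{ss}/\!\!/\Aut(F)$; the jump in $\dim \Hom(F,F)$ forces this quotient to be singular, so that $\Sigma$ is contained in the singular locus, and a tangent-space dimension count gives the reverse inclusion. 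The hypothesis $g\geq 3$ is used here to ensure that $\Sigma$ has codimension at least two inside the moduli space, which is precisely the threshold beyond which the stable locus and smooth locus coincide.

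To identify $\Omega$ as the singular locus of $\Sigma$, I would analyse the global structure of $\Sigma$. In the $\mathcal{U}_C(2,0)$ case, $\Sigma$ is the image of the addition map $\Pic^0(C)\times \Pic^0(C)\to \mathcal{U}_C(2,0)$, which factors through the symmetric square $\Sym^2 \Pic^0(C)$; this factorisation is an isomorphism onto $\Sigma$, and the symmetric square is smooth away from its diagonal, which produces exactly $\Omega$. In the $\mathcal{SU}$-case, $\Sigma$ is the Kummer variety of $\Pic^0(C)$ with respect to the involution $\xi\mapsto \xi^{-1}$, whose singular locus is the fixed locus $\Pic^0(C)[2]$. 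These global descriptions can then be matched against Luna-slice computations at the two types of strictly semistable points: at points of $\Sigma\setminus \Omega$ the stabiliser is a two-dimensional torus acting on $\Ext^1(F,F)$ with opposite weights on the off-diagonal blocks, producing a toric quotient, while at points of $\Omega$ the stabiliser jumps to $GL_2$ (respectively $PGL_2$ after quotienting by scalars in the $\mathcal{SU}$-case), yielding a genuinely deeper quotient singularity, which is in fact the Kaledin--Lehn local model recalled in Section~\ref{section_local_model}.

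The main obstacle will be making the Luna-slice analysis precise enough to do two things at once: verifying that the semistable locus $\Ext^1(F,F)^{ss}$ inside the slice matches the semistable deformations of $F$ as a bundle on $C$ (since not every first-order deformation preserves semistability), and then reading off the stratification by stabiliser type to rigorously distinguish $\Sigma\setminus\Omega$ from $\Omega$ as loci of different singularity type. Once this dictionary between deformations of the polystable representative and points of the GIT quotient is set up cleanly, the claimed stratification follows by assembling the local and global pictures above.
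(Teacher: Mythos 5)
The paper does not actually prove this Proposition: it is quoted from Seshadri \cite{Seshadri} (cf.\ also \cite{Narasimhan-Ramanan}), so there is no internal argument to compare with, and your GIT--plus--Luna-slice route is indeed the standard proof of the quoted result. Its overall architecture is sound, but the step carrying the real content is justified by a principle that is false as stated: ``the jump in $\dim\Hom(F,F)$ forces this quotient to be singular''. A jump of the stabiliser does not by itself make a GIT quotient singular; for $(g,r)=(2,2)$ exactly the same jump occurs at the polystable points, and yet $\mathcal{U}_C(2,0)$ and $\mathcal{SU}_C(2,\OO{C})\cong\mathbb{P}^3$ are smooth, as the paper recalls just before Remark \ref{triviality-determinant-morphism}. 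Likewise, attributing the hypothesis $g\geq 3$ to ``codimension at least two'' is a heuristic, not an argument. The inclusion $\Sigma\subseteq\mathcal{U}_C(2,0)^{\textnormal{sing}}$ genuinely requires computing the slice quotient: at $F=\xi\oplus\xi'$ with $\xi\not\cong\xi'$, unobstructedness of deformations on a curve gives the analytic model
\begin{equation*}
\bigl(\Ext^1(\xi,\xi)\oplus\Ext^1(\xi',\xi')\bigr)\times\Bigl(\bigl(\Ext^1(\xi,\xi')\oplus\Ext^1(\xi',\xi)\bigr)\sslash\mathbb{C}^\ast\Bigr),
\end{equation*}
whose second factor is the cone of rank $\leq 1$ matrices of size $g-1$; hence the embedding dimension is $2g+(g-1)^2$, which exceeds $\dim\mathcal{U}_C(2,0)=4g-3$ precisely when $g\neq 2$. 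A parallel $\GL(2,\mathbb{C})$- (resp.\ $\SL(2,\mathbb{C})$-) invariant computation at $\xi\oplus\xi$ settles the points of $\Omega$, and the same local charts also yield $\Sigma^{\textnormal{sing}}=\Omega$ and make rigorous the identifications $\Sigma\cong\Sym^2\Pic^0(C)$, resp.\ the Kummer variety, which you assert without argument. So the fix is to replace the automorphism-jump and codimension heuristics by this explicit invariant-theoretic computation (this is in substance what Narasimhan--Ramanan and Seshadri do), which simultaneously explains why $g=2$ is excluded.

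A secondary correction: the local model at points of $\Omega$ in the \emph{bundle} moduli space is not the Kaledin--Lehn model of Section \ref{section_local_model}. On a curve $\Ext^2$ vanishes, so the Luna slice is the whole linear space $\Ext^1(F,F)\cong H^1(C,\OO{C})\otimes\gl(2,\mathbb{C})$ acted on by conjugation, with no moment-map equation; the Hamiltonian-reduction/nilpotent-orbit model of Propositions \ref{Kaledin-Lehn-crucial} and \ref{local-structure-GL} pertains to the Higgs moduli space $\mathcal{M}_C(2,0)$, where the two-dimensional (symplectic-surface) obstruction theory supplies the quadratic equation. Conflating the two would in particular throw off the dimension counts above.
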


\subsection{Hyperelliptic action on the moduli space of rank two bundles} 

Assume $C$ is a smooth hyperelliptic curve of genus $g\geq 3$ and let $\sigma\in \Aut(C)$ be the hyperelliptic involution. We are interested in the following involution  
\begin{equation}\label{dualising+hyperelliptic}
\begin{split}
\sigma^\ast (-)^\ast \colon \mathcal{U}_C(2,0) & \to \mathcal{U}_C(2,0) \\
E & \mapsto \sigma^\ast E^\ast,
\end{split}
\end{equation}
given by the composition of the hyperelliptic involution and the dualising involution. Note that it preserves the closed subscheme $\mathcal{SU}_C(2, \OO{C})$ and the induced action is simply given by $\sigma^\ast$, as the dualising involution acts trivially on $\mathcal{SU}_C(2, \OO{C})$.

\begin{thm}[Hyperelliptic action on the moduli space of rank two bundles]\label{hyperelliptic-moduli-bundles} 
Let $C$ be a smooth hyperelliptic curve of genus $g\geq 3$. The fixed locus of \eqref{dualising+hyperelliptic} contains the singular locus $\Sigma(\mathcal{U}_C(2,0))$ and it is an irreducible subvariety of dimension $3g-1$. When $g=3$ it is a divisor, and the quotient of $\mathcal{U}_C(2, 0)$ by \eqref{dualising+hyperelliptic} is smooth. 

Similarly, when restricting to $\mathcal{SU}_C(2,\OO{C})$, we have that the fixed locus of $\sigma$ contains the singular locus $\Sigma(\mathcal{SU}_C(2,\OO{C}))$ and it is an irreducible subvariety of dimension $2g-1$. When $g=3$ it is a divisor, and the corresponding quotient of $\mathcal{SU}_C(2, \OO{C})$ is smooth. 
\end{thm}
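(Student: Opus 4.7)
The plan has three main steps: establishing that the singular locus is contained in the fixed locus, computing the dimension and proving irreducibility of the fixed locus, and (for $g=3$) verifying smoothness of the quotient.

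The inclusion of $\Sigma(\mathcal{U}_C(2,0))$ into the fixed locus is a direct application of Lemma \ref{action-on-Pic}: for $\xi, \xi'\in \Pic^0(C)$ one has $\sigma^\ast (\xi\oplus\xi')^\ast \cong \sigma^\ast \xi^{-1}\oplus \sigma^\ast (\xi')^{-1}\cong \xi\oplus \xi'$, and the case of $\mathcal{SU}_C(2, \OO{C})$ is analogous, since $\sigma^\ast(\xi\oplus \xi^{-1})\cong \xi^{-1}\oplus \xi$.

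For the dimension and irreducibility, I would observe that at a stable fixed point $[E]$ there exists an isomorphism $\phi\colon \sigma^\ast E^\ast \to E$ endowing $E$ with an essentially unique $\Ztwo$-equivariant structure, since the automorphism group of a stable bundle is just the scalars. The tangent space to the fixed locus at $[E]$ is then naturally the $\Ztwo$-invariant subspace $H^1(C, \End E)^{\Ztwo}$ (resp. $H^1(C, \End_0 E)^{\Ztwo}$ in the $\SL$-case). A direct application of the equivariant Riemann--Roch formula of Proposition \ref{equivariant-RR}, after analysing the induced action on the fibres at the ramification points $p\in R$, then yields the predicted dimensions $3g-1$ and $2g-1$. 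For irreducibility, I would push forward such equivariant bundles along $f\colon C\to \mathbb{P}^1$ and identify the fixed locus with a moduli space of rank-two parabolic bundles on $\mathbb{P}^1$ with a parabolic line in the fibre at each of the $2g+2$ branch points, which is well-known to be irreducible.

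The main obstacle is the smoothness of the quotient for $g=3$, which I would verify stratum by stratum. Over the smooth locus of $\mathcal{U}_C(2,0)$, the fixed locus being a divisor forces the involution to act as a reflection on the tangent space, so the quotient is smooth by the standard local linearisation argument. Around a point $\xi\oplus \xi'\in \Sigma\setminus \Omega$, the local model of $\mathcal{U}_C(2,0)$ is a product of a smooth factor of dimension $2g$ with a three-dimensional ordinary double point---the affine cone over the Segre embedding $\mathbb{P}^1\times \mathbb{P}^1\hookrightarrow \mathbb{P}^3$; the involution acts trivially on the smooth factor and swaps the two rulings of the ODP, and a direct invariant-theoretic calculation identifies the quotient of the ODP with $\mathbb{A}^3$. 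The deepest stratum $\Omega$ is the most delicate: here the local model of $\mathcal{U}_C(2,0)$ is a GIT quotient of $H^1(C, \OO{C})\otimes M_2(\mathbb{C})\cong M_2(\mathbb{C})^g$ by the conjugation action of $\operatorname{PGL}_2$, and the involution acts, up to a sign coming from $\sigma$, by transposition of the matrices. Smoothness of the further quotient by the involution thus reduces to an explicit invariant-theoretic statement about $M_2(\mathbb{C})^3$ under $\operatorname{PGL}_2\rtimes \Ztwo$, which I would verify by direct manipulation of the generating trace polynomials. The fact that $g=3$ is precisely the minimal genus yielding a smooth quotient is consistent with the dimension count above.
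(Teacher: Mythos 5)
Your overall strategy matches the paper's on the first two points (containment of $\Sigma$ via Lemma \ref{action-on-Pic}, and the dimension via an equivariant structure at a stable fixed point plus Proposition \ref{equivariant-RR}), but it diverges where the paper leans on the literature: for irreducibility the paper quotes Kumar's result for $\mathcal{SU}_C(2,\omega_C)$ and transfers it to $\mathcal{SU}_C(2,\OO{C})$ by a theta-characteristic, whereas you propose the equivariant-bundle/parabolic-bundle correspondence over $\mathbb{P}^1$; and for smoothness of the quotient at $g=3$ the paper simply cites Desale--Ramanan, while you sketch a self-contained stratum-by-stratum proof (reflection at smooth fixed points; the $3$-dimensional ODP modulo the ruling swap at $\Sigma\setminus\Omega$; the invariant ring of $M_2(\mathbb{C})^3$ under $\operatorname{PGL}_2\rtimes\Ztwo$ at $\Omega$). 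Your local models are correct and the invariant-theoretic computations do close (e.g.\ in the $\SL$-model the triple trace is negated by simultaneous transposition, so the invariant subring is the polynomial ring in the pairwise traces), so this buys a more explicit, citation-free argument, at the cost of needing an equivariant version of the Luna/Kuranishi local model at the fixed polystable points. Note also that the paper handles $\mathcal{U}_C(2,0)$ by reducing to $\mathcal{SU}_C(2,\OO{C})$ through the strongly \'etale Galois cover of Remark \ref{triviality-determinant-morphism}, which gives the $3g-1$ count and irreducibility for free; treating $\mathcal{U}_C(2,0)$ directly as you do requires some care in setting up the $\Ztwo$-structure on $\mathcal{E}nd(E)$ induced by an isomorphism $\sigma^\ast E^\ast\cong E$.

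The one step you leave genuinely under-specified is the "analysis at the ramification points" inside the dimension count. The crux there is a dichotomy: an equivariant structure $\alpha$ on a stable $E$ with $\det E\cong\OO{C}$ has either $\wedge^2\alpha=-1$ (so $\alpha_p$ has eigenvalues $\pm1$ at every $p\in R$), or $\wedge^2\alpha=1$ with $\alpha_p=\pm\ide_{E_p}$; in the second case, after twisting by $\OO{C}(D)$ for the divisor $D$ of points where $\alpha_p=-\ide$, Kempf's descent criterion makes $E$ a pull-back from $\mathbb{P}^1$, contradicting stability. Without excluding this second case you cannot assert that every stable fixed point contributes $\dim E_p^{\Ztwo}$ of the same local type, hence neither the purity of the fixed locus nor the values $2g-1$ and $3g-1$ (the second case would feed different fibre invariants into Proposition \ref{equivariant-RR}). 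So make this dichotomy and its exclusion explicit; once that is in place, your dimension count, and with it the reflection argument you use on the smooth stratum for $g=3$, goes through.
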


\begin{proof}  According to Remark \ref{triviality-determinant-morphism}, the moduli space $\mathcal{U}_C(2,0)$ looks \'etale locally like a product $\mathcal{SU}_C(2,\OO{C})\times \Pic^0(C)$. The Galois cover 
\begin{equation}\label{Galois-cover-SU-U}
\mathcal{SU}_C(2,\OO{C})\times \Pic^0(C) \overset{\otimes}{\longrightarrow} \mathcal{U}_C(2,0)
\end{equation}
is $\Ztwo$-equivariant with respect to the action on $\mathcal{SU}_C(2,\OO{C})$ and $\mathcal{U}_C(2,0)$ defined by \eqref{dualising+hyperelliptic}. 

Moreover, it is strongly \'etale -- in the sense of \cite[Appendix D to Ch. 1]{GIT} -- so that it induces an \'etale map between the corresponding fixed loci and there is a cartesian diagram
\begin{equation} 
\begin{tikzcd}
\mathcal{SU}_C(2, \OO{C})\times \Pic^0(C) \ar[r, "\otimes"] \ar[d] &  \mathcal{U}_C(2, 0) \ar[d] \\
\left( \mathcal{SU}_C(2, \OO{C})/\Ztwo \right) \times \Pic^0(C) \ar[r] & \mathcal{U}_C(2, 0)/\Ztwo
\end{tikzcd}
\end{equation}
where the vertical arrows are the quotient morphisms and the bottom arrow is \'etale. This shows that one can deduce the result about $\mathcal{U}_C(2,0)$ from the corresponding result about $\mathcal{SU}_C(2, \OO{C})$ and the hyperelliptic involution.

According to Lemma \ref{action-on-Pic}, the singular locus $\Sigma(\mathcal{SU}_C(2,\OO{C}))$ is fixed by $\sigma$. 

Now if $E$ is a slope-stable rank two bundle such that $\sigma^\ast E \cong E$, we can always lift such an isomorphism to an equivariant structure $(E, \alpha)$. Now since $\det(E)\cong \OO{C}$ and we only have two equivariant structures on $\OO{C}$, we only have two possibilities: either $\wedge^2 \alpha = -1$ or $\wedge^2 \alpha=1$ (and in the latter case $\alpha_p=\pm \ide_{E_p}$ at each ramification point $p\in R$).

In the second case, by replacing $E$ with $E\otimes \OO{C}(D)$, where $D$ is the effective divisor of ramification points $p\in R$ where $\alpha_p=-\ide_{E_p}$, we can assume by Kempf's descent criterion \cite[Thm. 2.3]{Drezet} that $E$ is actually a pull-back from $\mathbb{P}^1$, contradicting stability.

In the first case, we can use Theorem \ref{equivariant-RR} to compute the local dimension of the fixed locus around $E$ in $\mathcal{SU}_C(2, \OO{C})$. Indeed, we have
\begin{equation}
\begin{split}
\dim T_{E}^{\Ztwo} \mathcal{SU}_C(2, \OO{C}) & = \dim H^1(C, \mathcal{E}nd_0(E))^{\Ztwo} \\
& = -\chi^{\Ztwo}( \mathcal{E}nd_0(E))= 2g-1.
\end{split}
\end{equation}
Irreducibility follows from \cite{Kumar}\footnote{To be precise, in \cite{Kumar}, the author focuses on the connected component of the fixed locus given by $\sigma$-invariant polystable bundles $E$ which admit an equivariant structure $\alpha: E\to \sigma^\ast E$ such that $\operatorname{tr}(\alpha_p)=0$ for all $p\in R$, but our argument in the previous proof shows that this is the only possible case for stable bundles.}, where it is proved for $\mathcal{SU}_C(2, \omega_C)$: we can use any theta-characteristic to get an isomorphism with $\mathcal{SU}_C(2, \OO{C})$ which is equivariant with respect to the hyperelliptic involution by Lemma \ref{action-on-Pic}. The last statement is \cite[Thm. 3]{Desale-Ramanan}.
\end{proof}

\begin{rmk} If one only considers the hyperelliptic involution $E\mapsto \sigma^\ast E$ on $\mathcal{U}_C(2,0)$, there is an induced non-trivial action on $\Pic^0(C)$ in \eqref{Galois-cover-SU-U}, so that the singular locus is not contained in the fixed locus anymore and the latter has dimension $2g-1$. Hence the corresponding quotient $\mathcal{U}_C(2,0)/\Ztwo$ is never smooth. 
\end{rmk}

\section{Moduli spaces of Higgs bundles and involutions}\label{section_Higgs}

\subsection{Moduli spaces of Higgs bundles} Let $C$ be a smooth curve. Recall a \emph{Higgs bundle} on $C$ is a pair $(E,\phi)$ where $E$ is a vector bundle on $C$ and $\phi\in \Hom_C(E, E\otimes \omega_C)$ is known as the \emph{Higgs field}. 

Slope-stability for Higgs bundles is defined in terms of $\phi$-invariant subbundles, and for $g\geq 2$ one gets a well-defined coarse moduli space $\mathcal{M}_C(r,d)$ parametrising $S$-equivalence classes of slope-semistable Higgs bundles on $C$ of  rank $r$ and degree $d$ \cite{Nitsure, Simpson-II}. It is a normal, irreducible, quasi-projective variety of dimension
\begin{equation}
\dim \mathcal{M}_C(r, d)= r^2(2g-2)+2,
\end{equation}  
which contains a non-empty smooth open subset $\mathcal{M}^s_C(r,d)\subseteq \mathcal{M}_C(r,d)$ parametrising isomorphism classes of slope-stable Higgs bundles. \\

When the bundle $E$ itself is slope-stable, then $(E,\phi)$ is slope-stable for every Higgs field $\phi\in \Hom_C(E,E\otimes \omega_C)$. Since $\Hom_C(E,E\otimes \omega_C)\cong \Ext^1_C(E, E)^\ast$ by Serre duality, and the latter can be identified with the fibre of $T^\ast \mathcal{U}^s_C(\rk(E), \deg(E))$ at $E$, the rational map
\begin{equation}
\mathcal{M}_C(r,d) \dashrightarrow \mathcal{U}_C(r,d)
\end{equation}
which forgets the Higgs field coincides with the cotangent bundle over the stable locus $\mathcal{U}^s_C(r,d)$. \\

Having fixed a line bundle $\xi\in \Pic^d(C)$, by taking the fibre over $(\xi, 0)$ of the determinant morphism
\begin{equation}\label{determinant-morphism-Higgs}
(\det, \text{tr})\colon \mathcal{M}_C(r,d)\to T^\ast \Pic^d(C),
\end{equation}
one gets the moduli space $\mathcal{M}_C(r,\xi)$ of Higgs bundles with fixed determinant, or $\SL(r,\mathbb{C})$-Higgs bundles \cite{Hitchin-stable-bundles, Kiem}. It is a normal, irreducible, quasi-projective variety of dimension
\begin{equation}
\dim \mathcal{M}_C(r,\xi)=(r^2-1)(2g-2),
\end{equation}
which contains a non-empty smooth open subset $\mathcal{M}^s_C(r,\xi)$ parametrising stable Higgs bundles. \\

Once again, by forgetting the Higgs field, one gets a rational map
\begin{equation}
\mathcal{M}_C(r,\xi) \dashrightarrow \mathcal{SU}_C(r,\xi)
\end{equation}
which agrees with the cotangent bundle over the stable locus  $\mathcal{SU}^s_C(r,\xi)$.

 \begin{rmk}[Local triviality of determinant morphism]\label{triviality-determinant-morphism-Higgs} The determinant morphism \eqref{determinant-morphism-Higgs} becomes trivial after a finite \'etale cover. Indeed, we have the following cartesian diagram
\begin{equation} 
\begin{tikzcd}[column sep=2cm]
\mathcal{M}_C(r, \xi)\times T^\ast \Pic^d(C) \ar[r] \ar[d] &  \mathcal{M}_C(r, d) \ar[d, "{(\det, \operatorname{tr})}"] \\
T^\ast \Pic^d(C) \ar[r] & T^\ast \Pic^d(C)
\end{tikzcd}
\end{equation}
where the top and bottom arrow are described, respectively, by
\begin{equation}
\left((E, \phi), (\eta, \psi) \right) \mapsto \left( E\otimes \eta\otimes \xi^{-1}, \phi + \frac{\psi}{r} \right)
\end{equation}
and
\begin{equation}
(\eta, \psi) \mapsto (\eta^r \otimes \xi^{1-r}, \psi).
\end{equation}
Both horizontal arrows are in fact Galois covers with Galois group $\Pic^0(C)[r]$. In particular, the moduli spaces $\mathcal{M}_C(r, \xi)$ and $\mathcal{M}_C(r,d)$ are stably isosingular. 
\end{rmk}

 \subsection{Higgs bundles as torsion sheaves on the cotangent bundle} 
Let $C$ be a smooth curve of genus $g\geq 2$ and let $
S\coloneqq \mathbf{Spec}_C \left( \Sym^\bullet \omega_C^{-1}\right) $
be the total space of its canonical bundle. It is a quasi-projective symplectic surface that admits a natural compactification given by the geometrically ruled surface $
 \overline{S} \coloneqq \mathbf{Proj}_C \left( \Sym^\bullet (\OO{C}\oplus \omega_C^{-1}) \right)$ 
 obtained by adding a divisor $D_\infty$ at infinity. Denoting by $\pi\colon \overline{S}\to C$ the projection, we have that $\Pic(\overline{S})$ is generated by the subgroup $\pi^\ast \Pic(C)$ and the relative hyperplane bundle $\OO{\overline{S}}(1)$. In particular, 
 \begin{equation}\label{Neron-Severi-overline-K}
\NS(\overline{S})= \mathbb{Z}[f] \oplus \mathbb{Z}[D_\infty],
\end{equation}
where $[f]$ denotes the class of any fibre of $\pi$, so that $[f]^2=0$, $[f]\cdot [D_\infty]=1$ and $[D_\infty]^2=2-2g$.  \\

Since $\pi_\ast \OO{\overline{S}}(1)\cong \OO{C}\oplus \omega_C^{-1}$, we have a canonical section $
\mu \in H^0(\overline{S}, \OO{\overline{S}}(1))
$
which cuts out $D_\infty$. On the other hand, $\pi_\ast (\pi^\ast \omega_C \otimes \OO{\overline{S}}(1))\cong \omega_C \oplus \OO{C}$, so that we also get a section
$
\lambda \in H^0(\overline{S}, \pi^\ast \omega_C \otimes \OO{\overline{S}}(1))
$
whose zero locus is the zero section of $\omega_C$, whose image we still denote by $C\subseteq \overline{S}$ and which is then numerically equivalent to $[D_\infty] + (2g-2) [f]$.
Finally, the adjunction formula shows that
\begin{equation}\label{canonical-Kbar}
c_1(\omega_{ \overline{S}}) = -2 [D_\infty] \in \NS(\overline{S}).
\end{equation}

We fix once and for all a polarisation $H\in \Amp(\overline{S})$ such that
\begin{equation}\label{polarisation-on-K-bar}
H = k(2g-2)[f] + [D_\infty] \in \NS(\overline{S}),
\end{equation} 
for $k \gg 0$. The so-called \emph{spectral correspondence} relates semistable Higgs bundles to pure one-dimensional sheaves on $S$, which are parametrised by an open subset -- defined by the condition of being supported in $S\subseteq \overline{S}$ -- of the moduli space $\mathcal{M}_{\overline{S},H}(0, r[C], d+r(1-g))$ of $H$-Gieseker semistable sheaves on $\overline{S}$:

\begin{thm}[Spectral correspondence, {\cite[Lemma 6.8]{Simpson-II}}, {\cite[Prop. 3.6]{BNR}}] \label{spectral-correspondence} There is an open immersion  
\begin{equation} \label{spectral-correspondence-morphism}
\mathcal{M}_C(r,d) \to \mathcal{M}_{\overline{S},H}(0, r [C], d+r(1-g)).
\end{equation}
Namely, each family of slope-semistable Higgs bundles on $C$ of rank $r$ and degree $d$ induces a family of $H$-Gieseker semistable pure one-dimensional sheaves on $\overline{S}$ with $c_1=r [C]$ and $\chi=\ch_2=d+r(1-g)$ whose support does not intersect $D_\infty$.  
\end{thm}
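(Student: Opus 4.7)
The plan is to follow the classical construction of Beauville--Narasimhan--Ramanan, adapted to the compactification $\overline{S}$: I would produce a functorial assignment $(E,\phi)\mapsto F_\phi$ from Higgs bundles on $C$ to pure one-dimensional sheaves on $\overline{S}$, verify the numerical invariants, match the stability conditions, and identify the resulting morphism with an open embedding onto the locus of sheaves whose support avoids $D_\infty$.

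First I would construct $F_\phi$ as the cokernel of the morphism of locally free sheaves on $\overline{S}$
\[
\lambda\otimes \ide_{\pi^\ast E} \,-\, \mu\otimes \pi^\ast\phi \colon \pi^\ast E \longrightarrow \pi^\ast E\otimes\pi^\ast\omega_C\otimes\OO{\overline{S}}(1),
\]
where $\lambda$ and $\mu$ are the tautological sections introduced above. This morphism is generically injective because its determinant cuts out the spectral curve of $\phi$; and since $\lambda$ is nowhere vanishing on $D_\infty$ while $\mu$ cuts out $D_\infty$, the restriction of the morphism to $D_\infty$ is an isomorphism. Hence $F_\phi$ is a pure one-dimensional sheaf whose scheme-theoretic support lies inside $S = \overline{S}\setminus D_\infty$, with $\pi_\ast F_\phi\cong E$ and with the Higgs field recovered from multiplication by the regular function $\lambda/\mu$ along the support.

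Using the short exact sequence defining $F_\phi$, additivity of the Chern character, and the intersection relations in $\NS(\overline{S})$ recalled above, an elementary computation yields $c_1(F_\phi)=r[C]$ and $\chi(F_\phi)=\ch_2(F_\phi)=d+r(1-g)$. Since the construction is functorial in families -- it makes sense whenever $(E,\phi)$ is a family of Higgs bundles over any base -- it defines a morphism $\Psi\colon \mathcal{M}_C(r,d)\to \mathcal{M}_{\overline{S},H}(0, r[C], d+r(1-g))$.

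To upgrade $\Psi$ to an open immersion, the locus $U\subseteq \mathcal{M}_{\overline{S},H}(0,r[C], d+r(1-g))$ of sheaves with support disjoint from $D_\infty$ is open by semicontinuity, and a candidate inverse $U\to \mathcal{M}_C(r,d)$ is given by $F\mapsto(\pi_\ast F, \phi_F)$, where $\phi_F$ is induced by multiplication by $\lambda/\mu$ (regular on $U$). The main obstacle, and the reason for the specific choice of polarisation $H = k(2g-2)[f]+[D_\infty]$ with $k\gg 0$, is to verify that slope-semistability for Higgs bundles corresponds to $H$-Gieseker semistability for the associated spectral sheaves. Running the cokernel construction relatively produces a bijection between $\phi$-invariant subbundles $E'\subseteq E$ and pure subsheaves $F'\subseteq F_\phi$ supported in $S$; one must then show that for $k\gg 0$ the leading-order comparison of reduced Hilbert polynomials with respect to $H$ is governed precisely by the slopes of $E'$ and $E$, so that the two notions of (semi)stability coincide and $\Psi$ restricts to an isomorphism onto $U$.
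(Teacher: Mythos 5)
The paper itself gives no argument for this statement -- it is quoted from \cite{BNR} and \cite{Simpson-II} -- and your proposal reconstructs exactly that standard spectral-correspondence proof, so in approach you are aligned with the paper's source. One concrete slip: your cokernel presentation is off by a twist. With the map $\lambda\otimes\ide{\pi^\ast E}-\mu\otimes\pi^\ast\phi\colon \pi^\ast E\to \pi^\ast E\otimes\pi^\ast\omega_C\otimes\OO{\overline{S}}(1)$, restriction to $S$ (where $\mu$ trivialises $\OO{\overline{S}}(1)$) gives the eigenvalue presentation tensored by $\pi^\ast\omega_C$, so the cokernel satisfies $\pi_\ast F_\phi\cong E\otimes\omega_C$, not $E$, and $\chi(F_\phi)=d+r(g-1)$ rather than $d+r(1-g)$ (the class $c_1=r[C]$ is unaffected). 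To get the stated invariants you should either tensor your cokernel by $\pi^\ast\omega_C^{-1}$, or use the presentation $\pi^\ast(E\otimes\omega_C^{-1})\otimes\OO{\overline{S}}(-1)\to\pi^\ast E$ given by $\lambda-\mu\cdot\pi^\ast\phi$, after which $\pi_\ast F_\phi\cong E$ and $\chi(F_\phi)=\chi(E)=d+r(1-g)=\ch_2(F_\phi)$.

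The stability step you defer (``leading-order comparison for $k\gg 0$'') can in fact be closed exactly and needs no asymptotics: any subsheaf $F'\subseteq F_\phi$ has support contained in $\supp F_\phi\subseteq S$, hence finite over $C$ and disjoint from $D_\infty$, which forces $c_1(F')=r'[C]$ for some $1\leq r'\leq r$. Therefore $c_1(F')\cdot H=r'\,[C]\cdot H$ scales linearly in $r'$ for \emph{any} polarisation, and the Gieseker inequality $\chi(F')/(c_1(F')\cdot H)\leq \chi(F_\phi)/(c_1(F_\phi)\cdot H)$ is literally the slope inequality for the $\phi$-invariant subsheaf $\pi_\ast F'\subseteq E$, using $\chi(F')=\deg(\pi_\ast F')+r'(1-g)$; conversely $\phi$-invariant subsheaves of $E$ correspond to subsheaves of $F_\phi$ under the usual equivalence between sheaves on $S$ finite over $C$ and Higgs sheaves on $C$. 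Combined with the observation that any pure sheaf in the moduli space supported off $D_\infty$ has torsion-free, hence locally free, pushforward to $C$, this gives the inverse on the open locus $U$ (and compatibility with $S$-equivalence), completing your outline.
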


\subsection{Symplectic structure and crepant resolutions}  The open subset of $\mathcal{M}_{\overline{S},H}(\ch)$ defined by \eqref{spectral-correspondence-morphism} parametrises sheaves on the quasi-projective symplectic surface $S\subseteq \overline{S}$. In particular, it is naturally endowed with a symplectic structure on the stable locus, constructed by Mukai \cite{Mukai} by means of the canonical symplectic structure on $S$. 

This defines a symplectic structure on the stable locus $\mathcal{M}^s_C(r,d)$, which coincides with the canonical symplectic structure on $T^\ast \mathcal{U}^s_C(r,d)$, seen as an open subset of $\mathcal{M}^s_C(r,d)$ \cite[Thm. 1.1]{BBG}. 
 
Similarly, the moduli space $\mathcal{M}^s_C(r, \xi)$ of stable Higgs bundles with fixed determinant $\xi\in \Pic^d(C)$, is a holomorphic symplectic submanifold of $\mathcal{M}^s_C(r, d)$, and the induced symplectic structure extends the canonical symplectic structure on the open subset $T^\ast \mathcal{SU}^s_C(r,d)$.  \\

When $r$ and $d$ are coprime, every semistable Higgs bundle is stable, so that the moduli spaces $\mathcal{M}_C(r,d)$ and $\mathcal{M}_C(r, \xi)$ are holomorphic symplectic manifolds \cite{Hitchin-self-duality}.  On the other hand, strictly polystable sheaves give rise to singular points of the moduli space. We will focus, for instance, on the case $d=0$ (or equivalently $d$ is a multiple of $r$):

\begin{thm}[Symplectic resolutions of moduli spaces of Higgs bundles, {\cite{Tirelli, Kiem, Bellamy-Schedler-character}}]\label{singularities-GL-Higgs} Given a smooth curve $C$ of genus $g\geq 2$, an integer $r\geq 2$ and a line bundle $\xi\in \Pic^0(C)$, the moduli spaces $\mathcal{M}_C(r,0)$ and $\mathcal{M}_C(r,\xi)$ are holomorphic symplectic varieties. Moreover we have the following cases:
\begin{enumerate}[label={\upshape(\roman*)}]
\item When $(g,r)=(2,2)$, $\mathcal{M}_C(r,0)$ (resp. $\mathcal{M}_C(r,\xi)$) admits a 10-dimensional (resp. 6-dimensional) symplectic resolution obtained by blowing-up the reduced singular locus.
\item When $(g,r)\neq (2,2)$, the moduli spaces $\mathcal{M}_C(r,0)$ and $\mathcal{M}_C(r,\xi)$ do not admit any crepant resolution. 
\end{enumerate}
\end{thm}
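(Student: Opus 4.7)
The plan is to reduce everything to the local analysis of symplectic $2$-nilpotent orbit closures carried out in Section \ref{section_local_model}, via an \'etale-local description of the moduli spaces around their polystable points.

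First I would show that $\mathcal{M}_C(r,0)$ and $\mathcal{M}_C(r,\xi)$ are holomorphic symplectic varieties. The symplectic form on the smooth stable locus is already available from Mukai's construction via the spectral correspondence \eqref{spectral-correspondence-morphism}. To verify that the singularities are symplectic in the sense of \cite{Beauville-symplectic-singularities}, I would combine Luna's \'etale slice theorem with the deformation theory of Higgs bundles: around a polystable Higgs bundle $(E,\phi)=\bigoplus_i (E_i,\phi_i)^{\oplus m_i}$ with pairwise non-isomorphic stable summands, an \'etale neighbourhood in the moduli space is isomorphic to a symplectic reduction of an $\Ext^1$-type symplectic vector space by the reductive stabiliser $\prod_i\GL(m_i,\mathbb{C})$, and Beauville's theorem (Example \ref{examples-HSV}) identifies such a reduction as a holomorphic symplectic variety. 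Remark \ref{triviality-determinant-morphism-Higgs} then reduces the statement for $\mathcal{M}_C(r,0)$ to that for $\mathcal{M}_C(r,\xi)$.

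For part (i), with $(g,r)=(2,2)$, I would stratify the singular locus of $\mathcal{M}_C(2,\OO{C})$ by decomposition type, in analogy with Proposition \ref{singularities-U-SU}, so that its deepest stratum parametrises the polystable pairs $(\xi\oplus\xi,0)$ with $\xi\in\Pic^0(C)[2]$. A direct $\Ext$-quiver computation at such a point, combined with normal flatness along the strata (Remark \ref{normal_flatness}), would identify an analytic neighbourhood -- up to a smooth symplectic factor along the deepest stratum -- with a symplectic $2$-nilpotent orbit closure $\mathcal{N}_{\leq n}(V)$ of maximal rank as in Section \ref{section_local_model}. Theorem \ref{blow-up-nilpotent-cone} then provides a local symplectic resolution by blowing up the reduced singular locus, and this globalises to a symplectic resolution of $\mathcal{M}_C(2,\OO{C})$ by the same construction, since both blow-ups and the crepancy condition are local over the base (Remark \ref{crepancy-local}). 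The analogous statement for $\mathcal{M}_C(2,0)$ then follows from the \'etale cover of Remark \ref{triviality-determinant-morphism-Higgs} applied to the corresponding local model at its deepest stratum.

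For part (ii), with $(g,r)\neq(2,2)$, the analogous local model analysis instead produces either an orbit closure $\mathcal{N}_{\leq r'}(V)$ with $r'<n$, or a higher-rank local model built from symplectic quotients of quiver-type $\Ext$ spaces. The first part of Theorem \ref{blow-up-nilpotent-cone} rules out symplectic -- equivalently crepant -- resolutions in the first case, and since crepancy is local over the base, also in the ambient moduli space. The hardest part of the argument is the higher rank regime $r\geq 3$, where the local model is no longer a single $2$-nilpotent orbit closure; there one must appeal to the Namikawa-space computations and the general no-go results of \cite{Tirelli, Kiem, Bellamy-Schedler-character} to exclude any crepant resolution.
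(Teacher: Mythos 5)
First, note that the paper does not prove this statement at all: it is quoted from \cite{Tirelli, Kiem, Bellamy-Schedler-character} (see also Table \ref{table:GL}), so you are attempting to reconstruct a cited result. Your sketch of part (i) is broadly consistent with how the $(g,r)=(2,2)$ case is actually handled (and with the Example following Proposition \ref{local-structure-SL}): for $g=2$ one has $\dim V=4$, the Kaledin--Lehn local model $V\otimes\slie(2,\mathbb{C})\sslash\SL(2,\mathbb{C})$ maps isomorphically (no ramification) onto the maximal-rank cone $\mathcal{N}_{\leq 2}(\mathbb{C}^4)$, and Theorem \ref{blow-up-nilpotent-cone} plus locality of crepancy (Remark \ref{crepancy-local}) does the rest. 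But be aware that even the preliminary claim that these moduli spaces have symplectic singularities is not delivered by the tools you invoke: Example \ref{examples-HSV}(2) concerns quotients by \emph{finite} groups of symplectic automorphisms, whereas the Luna slice at a polystable point is a Hamiltonian reduction by a positive-dimensional reductive group $\prod_i\GL(m_i,\mathbb{C})$; that such reductions are normal with rational Gorenstein singularities is precisely the nontrivial content of \cite{Tirelli, Bellamy-Schedler-character} (and uses formality of the deformation dg Lie algebra, not just ``a direct $\Ext$-quiver computation'').

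The genuine gap is in part (ii) for $r=2$, $g\geq 3$. Your ``first case'' assumes the local model at the deepest stratum is an orbit closure $\mathcal{N}_{\leq r'}(V)$ with $r'<n$, so that the first half of Theorem \ref{blow-up-nilpotent-cone} applies. That identification is wrong: by Proposition \ref{local-structure-SL} (i.e.\ \cite{KL}), the local model is the Hamiltonian reduction $V\otimes\slie(2,\mathbb{C})\sslash\SL(2,\mathbb{C})$ with $\dim V=2g\geq 6$, which is only a \emph{two-to-one cover} of $\mathcal{N}_{\leq 3}(V)$, ramified over the rank $\leq 2$ locus -- it is not isomorphic to any $\mathcal{N}_{\leq r'}(V)$. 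The failure of your inference is not cosmetic: for $g=3$ the target $\mathcal{N}_{\leq 3}(\mathbb{C}^6)$ is the maximal-rank cone and \emph{does} admit a symplectic resolution (this is the engine of the whole paper, Theorem \ref{main_thm_SL}), yet $\mathcal{M}_C(2,\OO{C})$ admits none; so non-existence of a crepant resolution for the moduli space can be neither read off from, nor contradicted by, the geometry of the nilpotent orbit closure downstairs. The actual no-go argument in rank two is the one of \cite{KL, Kaledin-Lehn-Sorger, Kiem, Tirelli}: the local model is locally factorial with singular locus of codimension $\geq 4$, hence has terminal $\mathbb{Q}$-factorial singularities and no crepant resolution; your proposal only defers to the literature for $r\geq 3$, but the rank-two case needs it just as much. (The reduction between $\mathcal{M}_C(r,0)$ and $\mathcal{M}_C(r,\xi)$ via Remark \ref{triviality-determinant-morphism-Higgs} is fine.)
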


\begin{table}
\centering
\begingroup
\small
\begin{tabular}{|>{\centering\arraybackslash}m{\dimexpr 0.10\textwidth-2\tabcolsep-1.3333\arrayrulewidth\relax}|>{\centering\arraybackslash}m{\dimexpr 0.42\textwidth-2\tabcolsep-1.3333\arrayrulewidth\relax}|>{\centering\arraybackslash}m{\dimexpr 0.42\textwidth-2\tabcolsep-1.3333\arrayrulewidth\relax}|}
\hline
\multicolumn{1}{|c|}{} & \multicolumn{1}{c|}{$g=2$} & \multicolumn{1}{c|}{$g\geq 3$} \\
\hline
$r=1$ & \multicolumn{2}{c|} {$\mathcal{M}_{C}(1, 0)\cong T^\ast \Pic^0(C)\cong \Pic^0(C)\times H^0(C, \omega_C)$ } \\
\hline
$r=2$ & $\mathcal{M}_{C}(2,0)$ and  $\mathcal{M}_{C}(2, \xi)$ symplectic varieties of dimension 10 and 6 with a crepant resolution  \cite{Tirelli, Kiem} & $\mathcal{M}_{C}(2, 0)$ and $\mathcal{M}_{C}(2, \xi)$ symplectic varieties with terminal singularities, no crepant resolution \cite{Tirelli, Kiem} \\
\hline
$r \geq 3$ & \multicolumn{2}{>{\centering\arraybackslash}p{0.82\textwidth}|} {$\mathcal{M}_{C}(r,0)$ and $\mathcal{M}_C(r, \xi)$ symplectic varieties with terminal singularities, no crepant resolution \cite{Tirelli, Kiem, Bellamy-Schedler-character}} \\
\hline
\end{tabular}
\endgroup
\caption{Moduli spaces $\mathcal{M}_{C}(r, 0)$ and $\mathcal{M}_C(r, \xi)$ of semistable Higgs bundles on a smooth curve $C$ of genus $g$, for $r\geq 1$ and $\xi \in \Pic^0(C)$.}
\label{table:GL}
\end{table}

\subsection{Singularities of moduli spaces of rank two Higgs bundles} We will from now on be focusing on the case of rank two and degree zero Higgs bundles, for which we can get an explicit description of the singular locus, analogous to the one given for moduli spaces of bundles in Proposition \ref{singularities-U-SU}, by simply looking at the Jordan--H\"older decomposition of each $S$-equivalence class:

\begin{prop}[Singularities of $\mathcal{M}_C(2,0)$ and $\mathcal{M}_C(2,\OO{C})$, {\cite[\textsection 2]{Kiem}}] \label{sing-Higgs} Let $C$ be a smooth curve of genus $g\geq 2$. The stratification of $\mathcal{M}_C(2,0)$ by symplectic leaves of Proposition \ref{stratification-symplectic-leaves} is induced by the filtration
\begin{equation}\label{stratification-GL}
\mathcal{M}_C(2,0) \supseteq \Sigma(\mathcal{M}_C(2,0)) \supseteq \Omega(\mathcal{M}_C(2,0)),
\end{equation}
where
\begin{equation}
\begin{split}
\Sigma(\mathcal{M}_C(2,0)) & \coloneqq \mathcal{M}_C(2,0)^{\textnormal{sing}}= \{(\xi, \phi) \oplus (\xi', \phi') \mid (\xi,\phi), (\xi', \phi') \in T^\ast \Pic^0(C)  \}, \\
\Omega(\mathcal{M}_C(2,0))& \coloneqq  \Sigma(\mathcal{M}_C(2,0))^{\textnormal{sing}} =\{  (\xi, \phi) \oplus (\xi, \phi) \mid (\xi, \phi) \in T^\ast \Pic^0(C)\},
\end{split}
\end{equation}
where we consider Higgs bundles up to $S$-equivalence. 

Similarly, for $\mathcal{M}_C(2, \OO{C})$ we have
\begin{equation}\label{stratification-SL}
\mathcal{M}_C(2, \OO{C}) \supseteq \Sigma(\mathcal{M}_C(2, \OO{C})) \supseteq \Omega(\mathcal{M}_C(2, \OO{C})),
\end{equation} 
where
\begin{equation}
\begin{split}
\Sigma(\mathcal{M}_C(2, \OO{C})) & \coloneqq \mathcal{SU}_C(2,\OO{C})^{\textnormal{sing}}= \{ (\xi, \phi) \oplus (\xi^{-1}, -\phi) \mid (\xi, \phi) \in T^\ast \Pic^0(C) \}, \\
\Omega(\mathcal{M}_C(2, \OO{C}))& \coloneqq  \Sigma(\mathcal{M}_C(2, \OO{C}))^{\textnormal{sing}} =\{  (\xi, 0) \oplus (\xi, 0) \mid \xi \in \Pic^0(C)[2] \},
\end{split}
\end{equation}
where we consider bundles up to $S$-equivalence. 
\end{prop}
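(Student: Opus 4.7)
The plan is to deduce the filtration set-theoretically from the Jordan--H\"older description of $S$-equivalence classes of semistable rank-two degree-zero Higgs bundles, and then match it to the symplectic-leaf stratification via a local analytic model at each polystable point.

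First, I would observe that for a strictly semistable $(E, \phi) \in \mathcal{M}_C(2, 0)$, any $\phi$-invariant slope-zero line subbundle $\xi \subseteq E$ gives a rank-one sub-Higgs bundle $(\xi, \phi_\xi) \in T^\ast \Pic^0(C)$ and a quotient $(\xi', \phi_{\xi'}) \in T^\ast \Pic^0(C)$, so its $S$-equivalence class is represented by $(\xi, \phi_\xi) \oplus (\xi', \phi_{\xi'})$. Combined with the standard fact that $\mathcal{M}_C^s(2,0)$ is smooth (since stability kills the traceless part of the obstruction space computed by the hypercohomology of the Hitchin complex), this identifies $\Sigma(\mathcal{M}_C(2,0)) = \mathcal{M}_C(2,0)^{\mathrm{sing}}$ as the set described in the statement.

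To go further, I would appeal to a Luna-type local analytic model near a polystable point: the germ of $\mathcal{M}_C(2, 0)$ at $(\xi, \phi_\xi) \oplus (\xi', \phi_{\xi'})$ is the symplectic reduction of the first hypercohomology of the deformation complex by the reductive automorphism group modulo its centre. When the two summands are non-isomorphic the stabiliser is a one-dimensional torus and the transverse slice is an ordinary double point, smooth along the locus of deformations of the direct-sum decomposition; when the two summands coincide the stabiliser enhances to $\operatorname{PGL}(2, \mathbb{C})$ and the transverse slice becomes an instance of the Kaledin--Lehn local model of Section \ref{section_local_model}, which is singular along its lower-rank stratum. This simultaneously identifies $\Omega(\mathcal{M}_C(2, 0)) = \Sigma(\mathcal{M}_C(2, 0))^{\mathrm{sing}}$ and, by the product structure of the local model, shows that the filtration \eqref{stratification-GL} induces the symplectic-leaf stratification of Proposition \ref{stratification-symplectic-leaves}.

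The $\mathcal{M}_C(2, \OO{C})$ case follows by imposing the constraints $\det E \cong \OO{C}$ and $\operatorname{tr}\phi = 0$ on the above description, which on a direct sum force $\xi' \cong \xi^{-1}$ and $\phi_{\xi'} = -\phi_\xi$, and then, in the deepest stratum, $\xi \cong \xi^{-1}$ (so $\xi \in \Pic^0(C)[2]$) and $\phi_\xi = -\phi_\xi$ (so $\phi_\xi = 0$). The main technical obstacle in both cases is the explicit identification of the transverse slice at a deepest point, where the stabiliser jumps from a torus to a non-abelian group and one must compute the symplectic reduction with care; this is precisely the computation carried out in \cite[\textsection 2]{Kiem}, which I would invoke to close the argument.
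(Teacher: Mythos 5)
Your proposal is essentially the paper's own treatment: the paper obtains the strata by reading off Jordan--H\"older decompositions of $S$-equivalence classes and defers the local analysis to \cite[\textsection 2]{Kiem}, whose Luna-slice/Hamiltonian-reduction computation is exactly what you sketch and also invoke. One small correction that does not affect the argument: for $g\geq 3$ the transverse slice at a point of $\Sigma\setminus\Omega$ with non-isomorphic summands is the $(4g-6)$-dimensional minimal nilpotent orbit closure (the cone of rank-one traceless endomorphisms of a $(2g-2)$-dimensional space), not an ordinary double point -- that identification is only correct for $g=2$ -- but all you need is that this slice is singular precisely at its vertex, which still holds.
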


Note that $\Sigma(\mathcal{M}_C(2,0))$ is isomorphic to $\Sym^2 (T^\ast \Pic^0(C))$, whose singular locus is a diagonally embedded copy of $T^\ast \Pic^0(C)$, so that
\begin{equation}
\begin{split}
\dim \Sigma(\mathcal{M}_C(2,0)) & = 4g, \\
\dim \Omega(\mathcal{M}_C(2,0)) & = 2g.
\end{split} 
\end{equation}

On the other hand, the subvariety $\Sigma(\mathcal{M}_C(2, \OO{C}))$ is isomorphic to the quotient of the cotangent bundle to the Jacobian $T^\ast \Pic^0(C)$ by the involution $(\xi, \phi) \mapsto (\xi^{-1}, -\phi)$, whose fixed locus is  given by the $2^{2g}$ points of $\Omega(\mathcal{M}_C(2, \OO{C}))$, corresponding to 2-torsion elements of the Jacobian.

\subsection{Involutions on moduli spaces of rank two Higgs bundles}  We will now define the involutions on $\mathcal{M}_C(2,0)$ and $\mathcal{M}_C(2, \OO{C})$ mentioned in Theorems \ref{main_thm_GL}, \ref{main_thm_SL} and \ref{reformulation}. \\

The first natural way of defining automorphisms of the moduli space is by lifting automorphisms of the base curve: if $\sigma\in \Aut(C)$ is an involution, then for a polystable pair $(E,\phi)$ in $\mathcal{M}_C(2,0)$, the assignment
\begin{equation}\label{lifting-involution-Higgs}
(E, \phi) \mapsto (\sigma^\ast E, \sigma^\ast \phi),
\end{equation}
where we are implicitly identifying $\sigma^\ast \omega_C\cong \omega_C$ by means of the natural equivariant structure on $\omega_C$, defines a symplectic involution on $\mathcal{M}_C(2,0)$. Indeed, the equivariant structure defines a lift of $\sigma$ to an involution of the ruled surface $\overline{S}$ -- which preserves the symplectic form on $S$ -- fixing $H$ and $C$, and hence naturally acts via pull-back on the Gieseker moduli space  $\mathcal{M}_{\overline{S},H}(0, 2[C], 2-2g)$ \cite[\textsection 3]{Franco}. \\

On the other hand, by considering the involution on $\overline{S}$ induced by multiplication by $-1$ on the fibres of $\omega_C$ -- which is anti-symplectic on $S$ -- and the induced action on the Gieseker moduli space, one gets the anti-symplectic involution
\begin{equation}\label{minus-one-explicit}
(E, \phi) \mapsto (E, -\phi).
\end{equation}

Note that the composition $(E, \phi)\mapsto (\sigma^\ast E, -\sigma^\ast \phi)$ of \eqref{lifting-involution-Higgs} and \eqref{minus-one-explicit} is simply induced by the twisted equivariant structure $\omega_C[-1]$ \cite[\textsection 3]{Franco}. \\

Finally, since $\mathcal{M}_C(2,0)$ can be regarded as a compactified Jacobian of the family of spectral curves, one would like a family version of the dualising involution -- and possibly twisting by a line bundle to fix the degree, as in \cite{Sacca-Prym}. This can be achieved by means of the spectral correspondence and the derived dual on the ruled surface $\overline{S}$, but one has to be careful with the choice of polarisation:

\begin{prop}[Duality for pure one-dimensional sheaves, \cite{Sacca-Prym}]\label{dualising-involution} Let $(X,H)$ be a smooth polarised surface and $\ch\in H^{2\ast}(X,\mathbb{Q})$ the Chern character of a one-dimensional $H$-Gieseker semistable sheaf. 

Given a line bundle $\xi \in \Pic(X)$, the assignment
\begin{equation}\label{dualisation-formula}
F \mapsto R\HHom_X(F, \OO{X})[1]\otimes \xi \cong \EExt^1_X(F, \xi)
\end{equation} 
defines an involution on the moduli space $\mathcal{M}_{X,H}(\ch)$ provided that the following two conditions on $H$ and $\xi$ are satisfied for every $H$-Gieseker semistable sheaf $F$ with $\ch(F)=\ch$:
\begin{enumerate}[label={\upshape(\roman*)}]
\item $2\chi(F)+c_1(F)\cdot (c_1(\omega_X)-c_1(\xi))=0$;
\item For all subsheaves $E\subseteq F$ one has
\begin{equation}\label{condition-stability-invariant}
\frac{c_1(E)\cdot (c_1(\omega_X)-c_1(\xi))}{c_1(E)\cdot H}=\frac{c_1(F)\cdot (c_1(\omega_X)-c_1(\xi))}{c_1(F)\cdot H}.
\end{equation}
\end{enumerate}
\end{prop}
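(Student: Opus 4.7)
My plan is to check four things in turn: (a) that $\Phi(F)\coloneqq \EExt^1_X(F,\xi)\cong R\HHom_X(F,\OO{X})[1]\otimes \xi$ is a coherent sheaf depending flatly on $F$, (b) that $\Phi$ preserves the Chern character, (c) that $\Phi$ preserves $H$-Gieseker semistability, and (d) that $\Phi\circ \Phi\cong \ide$. For (a), $F$ pure one-dimensional on the smooth surface $X$ is Cohen--Macaulay of codimension one, so $R\HHom_X(F,\OO{X})$ is concentrated in degree one and $\Phi(F)$ is itself a pure one-dimensional coherent sheaf; the relative version of this sends flat families to flat families, so $\Phi$ induces a self-morphism of the moduli space (once semistability is checked). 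For (d), Grothendieck--Serre biduality $R\HHom_X(R\HHom_X(F,\omega_X),\omega_X)\cong F$ together with the invertibility of $\omega_X$ makes the $\xi$-twists and shifts cancel in $\Phi\circ \Phi$. For (b), the product formula $\ch(R\HHom_X(F,\xi))=\ch(\xi)\cdot (0,-c_1(F),\ch_2(F))$ combined with the shift by one gives
\begin{equation*}
\ch(\Phi(F))=(0,\,c_1(F),\,c_1(F)\cdot c_1(\xi)-\ch_2(F)),
\end{equation*}
and Riemann--Roch on a surface reads $2\ch_2(F)=2\chi(F)+c_1(F)\cdot c_1(\omega_X)$, so the equality $\ch(\Phi(F))=\ch(F)$ is precisely hypothesis (i).

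The main step is (c). Let $F$ be $H$-Gieseker semistable and $K\subseteq \Phi(F)$ a subsheaf, which is automatically pure one-dimensional since $\Phi(F)$ is. Decompose the quotient $Q=\Phi(F)/K$ into its zero-dimensional torsion $Q_0$ and its pure one-dimensional part $Q_1$. Applying $R\HHom_X(-,\xi)$ to $0\to K\to \Phi(F)\to Q\to 0$ and using the standard $\EExt$-vanishings on pure one-dimensional and on zero-dimensional sheaves yields the four-term exact sequence
\begin{equation*}
0\to \Phi(Q_1)\to F\to \Phi(K)\to \EExt^2_X(Q_0,\xi)\to 0,
\end{equation*}
so $\Phi(Q_1)$ is a subsheaf of $F$. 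Setting $D\coloneqq c_1(\omega_X)-c_1(\xi)$ and $\lambda\coloneqq (c_1(F)\cdot D)/(c_1(F)\cdot H)$, hypothesis (ii) applied to $\Phi(Q_1)\subseteq F$ forces $c_1(A)\cdot D=\lambda\,c_1(A)\cdot H$ for every relevant subobject $A$, which translates into the uniform affine identity $\mu_H(\Phi(A))=-\mu_H(A)-\lambda$. Hypothesis (i) makes $\mu_H(F)=-\lambda/2$, so semistability of $F$ applied to $\Phi(Q_1)\subseteq F$ becomes $\mu_H(Q_1)\geq -\lambda/2$, and combined with the positivity $\chi(Q_0)\geq 0$ (both sides vanish trivially when $Q_1=0$) this yields $\mu_H(K)\leq \mu_H(\Phi(F))$, i.e.\ $p_K\leq p_{\Phi(F)}$.

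The hard part is exactly this last step: cokernels of subsheaves of $\Phi(F)$ need not be pure, and the zero-dimensional torsion $Q_0$ has to be tracked carefully through the long exact sequence of $\EExt$-sheaves, since it only surfaces in $\EExt^2_X(Q_0,\xi)$. Hypothesis (ii) is designed precisely to make the affine slope shift under $\Phi$ uniform across all subobjects appearing in this sequence, so that Gieseker semistability transports cleanly through the duality. Once (a)--(d) are established, $\Phi$ is a contravariant exact equivalence on the pure one-dimensional category, hence preserves Jordan--H\"older graded objects up to reordering and therefore descends to an involution of $S$-equivalence classes in $\mathcal{M}_{X,H}(\ch)$.
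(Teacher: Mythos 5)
Your proof is correct and follows essentially the same route as the paper: the Chern character/Hilbert polynomial computation identifying condition (i) with invariance of $\ch$, and the transfer of semistability through dualising subobjects via $\EExt^1_X(-,\xi)$, with condition (ii) making the slope shift uniform so that twisted and untwisted Gieseker semistability coincide. If anything, you are more careful than the paper's brief stability argument, since you track the zero-dimensional torsion of quotients explicitly (via $\EExt^2_X(Q_0,\xi)$ and $\chi(Q_0)\geq 0$) and spell out biduality, flatness in families and compatibility with $S$-equivalence, which the paper delegates to the cited references.
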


\begin{proof} First note that for each pure one-dimensional sheaf, the complex  \begin{equation}
R\HHom_X(F, \OO{X})[1]\end{equation} is indeed a sheaf \cite[Prop. 1.1.10]{Huy-Lehn} and the assignment \eqref{dualisation-formula} is well-behaved in families by \cite[Lemma 3.8]{Sacca-Prym}. Hence the involution is well defined provided that $\ch$ is invariant and stability is preserved.

Since $c_1(F)$ is clearly preserved by \eqref{dualisation-formula} as the Fitting support is invariant, a direct computation shows that
\begin{equation}\label{hilb-poly-dual}
P_H(\EExt^1_X(F, \xi), t)=t(c_1(F)\cdot H)-\chi(F) - c_1(F)\cdot (c_1(\omega_X)-c_1(\xi)),
\end{equation}
so that invariance of $\ch$ is precisely condition $\mathrm{(i)}$. 

As for stability, there is a one-to-one correspondence between subsheaves of $F$ and quotient sheaves of $\EExt^1_X(F, \xi)$, given by associating to $E\subseteq F$ the induced quotient $\EExt^1_X(F, \xi)\to \EExt^1_X(E, \xi)$. Using \eqref{hilb-poly-dual}, we see that $\EExt^1_X(F, \xi)$ if $H$-Gieseker semistable if and only if
\begin{equation}
\frac{\chi(E)+c_1(E)\cdot (c_1(\omega_X)-c_1(\xi))}{c_1(E)\cdot H} \leq \frac{\chi(F)+c_1(F)\cdot (c_1(\omega_X)-c_1(\xi))}{c_1(F)\cdot H},
\end{equation}
for all subsheaves $E\subseteq F$. In other words, \eqref{dualisation-formula} turns $H$-Gieseker stability into $(c_1(\omega_X)-c_1(\xi))$-twisted $H$-Gieseker stability, and the two are equivalent under condition \eqref{condition-stability-invariant}. 
\end{proof}

In the case of compact symplectic surfaces considered in \cite{Sacca-Prym}, it is essentially impossible to find invariants such that the conditions of Proposition \ref{dualising-involution} are satisfied \cite[\textsection 3.5]{Sacca-Prym}, so one has to settle for a birational involution. A crucial step in our construction is to show that such an involution preserves stability for sheaves supported in the open subset $S\subseteq \overline{S}$:

\begin{cor}[Duality for Higgs bundles]
\label{dualising-bundle} For a semistable pair $(E,\phi)$ in $\mathcal{M}_C(2,0)$, the assignment
\begin{equation}\label{duality-explicit}
(E, \phi) \mapsto (E^\ast, \phi^t)
\end{equation}
defines an anti-symplectic involution on $\mathcal{M}_C(2,0)$.
\end{cor}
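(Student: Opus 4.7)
The plan is to realize the map as the restriction to the spectral locus of a duality involution on the ambient Gieseker moduli space $\mathcal{M}_{\overline{S}, H}(0, 2[C], 2-2g)$, obtained from Proposition \ref{dualising-involution} for a suitable twist $\xi \in \Pic(\overline{S})$. Numerically the natural candidate is $\xi = \pi^\ast \omega_C^{-1}$, so that $c_1(\xi) = (2-2g)[f] \in \NS(\overline{S})$.

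I would first verify condition $\mathrm{(i)}$: using $[C]\cdot [D_\infty] = 0$ and $[C]\cdot[f] = 1$, a direct computation gives
\begin{equation*}
c_1(F) \cdot (c_1(\omega_{\overline{S}})-c_1(\xi)) = 2[C] \cdot \bigl(-2[D_\infty] + (2g-2)[f]\bigr) = 4g - 4 = -2\chi(F).
\end{equation*}
For condition $\mathrm{(ii)}$, the key observation is that every $F$ in the image of the spectral correspondence is supported in the open subset $S \subseteq \overline{S}$, hence so is every subsheaf $E \subseteq F$ by purity, whence $c_1(E)\cdot [D_\infty] = 0$. The explicit form \eqref{polarisation-on-K-bar} of $H$ then makes both sides of \eqref{condition-stability-invariant} equal to $1/k$. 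It follows that the involution $F \mapsto \EExt^1_{\overline{S}}(F, \pi^\ast \omega_C^{-1})$ preserves the spectral locus and descends to an involution on $\mathcal{M}_C(2,0)$.

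Next, one has to translate this operation back through the spectral correspondence and identify it with $(E,\phi) \mapsto (E^\ast, \phi^t)$. Since $\omega_{\overline{S}/C} = \omega_{\overline{S}} \otimes \pi^\ast \omega_C^{-1}$, relative Grothendieck duality along $\pi$ yields a canonical isomorphism $\pi_\ast \EExt^1_{\overline{S}}(F, \pi^\ast \omega_C^{-1}) \cong \HHom_C(\pi_\ast F, \OO{C}) = E^\ast$, and one checks that multiplication by the tautological section $\lambda \in H^0(\overline{S}, \pi^\ast \omega_C \otimes \OO{\overline{S}}(1))$ on the left-hand side corresponds to $\phi^t$ on the right under this identification.

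Finally, anti-symplecticity can be read off on the dense open subset $T^\ast \mathcal{U}_C^s(2,0) \subseteq \mathcal{M}_C^s(2,0)$: the cotangent lift of the dualization $E \mapsto E^\ast$ of stable bundles is the involution $(E, \phi) \mapsto (E^\ast, -\phi^t)$, because the differential of $E \mapsto E^\ast$ on infinitesimal deformations is $A \mapsto -A^t$ and its transpose under the Serre duality pairing $\Ext^1(E,E) \otimes \Hom(E, E\otimes \omega_C) \to \mathbb{C}$, $(A, \phi) \mapsto \operatorname{tr}(A \circ \phi)$, is $\phi \mapsto -\phi^t$. Being a cotangent lift, this is symplectic; hence our involution, which differs from it by the anti-symplectic involution \eqref{minus-one-explicit}, is anti-symplectic on the open subset and therefore everywhere by closedness. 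The main obstacle I anticipate is the careful sign bookkeeping in step three, where the Grothendieck duality isomorphism is only canonical up to a sign and one must confirm that the identification is consistent with $(E,\phi) \mapsto (E^\ast, -\phi^t)$ being the genuine cotangent lift.
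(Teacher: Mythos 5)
Your proposal is correct and follows the same core strategy as the paper: view $\mathcal{M}_C(2,0)$ as an open subset of $\mathcal{M}_{\overline{S},H}(0,2[C],2-2g)$, apply Proposition \ref{dualising-involution} for a suitable twist, and verify conditions (i)--(ii) using that spectral sheaves (and their subsheaves) avoid $D_\infty$ together with the choice \eqref{polarisation-on-K-bar} of $H$. The differences are modest. You take $\xi=\pi^\ast\omega_C^{-1}$ while the paper takes $\xi=\OO{\overline{S}}(-C)=\pi^\ast\omega_C^{-1}\otimes\OO{\overline{S}}(-D_\infty)$; since $\OO{\overline{S}}(D_\infty)$ is trivialised on $S$ by the section $\mu$, the two twists agree near the support of every relevant sheaf, so they induce the same involution on the spectral locus, and your numerical checks are correct. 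Two points where your write-up is more explicit than the paper, and worth tightening: first, $\omega_{\overline{S}/C}$ is \emph{not} isomorphic to $\pi^\ast\omega_C^{-1}$ on $\overline{S}$ (they differ by $\OO{\overline{S}}(-2D_\infty)$), so the Grothendieck duality step should be run on $S$, where $\omega_{\overline{S}}\vert_S\cong\OO{S}$ and hence $\omega_{S/C}\cong p^\ast\omega_C^{-1}$ exactly -- harmless here, but the justification as stated is incomplete. Second, your anti-symplecticity argument (cotangent lift of $E\mapsto E^\ast$ is $(E,\phi)\mapsto(E^\ast,-\phi^t)$, hence symplectic, so composing with \eqref{minus-one-explicit} gives an anti-symplectic map, extended from the dense open $T^\ast\mathcal{U}^s_C(2,0)$ by continuity and the compatibility of the Mukai form with the cotangent structure) is self-contained and replaces the paper's citation of \cite[Prop. 3.11]{Sacca-Prym}; note also that the sign ambiguity you flag is immaterial for the statement, since $(E,\phi)\mapsto(E,-\phi)$ is itself a well-defined involution, so either identification yields well-definedness of \eqref{duality-explicit}, and the cotangent computation then pins down anti-symplecticity.
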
 

\begin{proof} As above, we once again regard $\mathcal{M}_C(2,0)$ as an open subset of the Gieseker moduli space $\mathcal{M}_{\overline{S},H}(0, 2[C], 2-2g)$ and consider the involution defined in Proposition \ref{dualising-involution} with $\xi=\OO{\overline{S}}(-C)$, where $C$ is considered as the zero section in $S\subseteq \overline{S}$. Note that both conditions are satisfied if we restrict to $\mathcal{M}_C(2,0)$. Indeed, the support of any sheaf $F$ supported in $S\subseteq \overline{S}$ does not intersect $D_\infty$, so that by our choice \eqref{polarisation-on-K-bar} of $H$ and \eqref{canonical-Kbar} we have
\begin{equation}
\frac{c_1(F)\cdot (c_1(\omega_{\overline{S}})+[C])}{c_1(F)\cdot H}=\frac{(2g-2)c_1(F)\cdot [f]}{k(2g-2)c_1(F)\cdot [f]}=\frac{1}{k}. 
\end{equation}
Note that this ratio does not depend on $F$, so that condition \eqref{condition-stability-invariant} in Proposition \ref{dualising-involution} is satisfied for any sheaf in $\mathcal{M}_C(2,0)\subseteq \mathcal{M}_{\overline{S},H}(0, 2[C], 2-2g)$.  Finally, \eqref{duality-explicit} is anti-symplectic by \cite[Prop. 3.11]{Sacca-Prym}. 
\end{proof}

\subsection{$\mathcal{M}_C(2, \OO{C})$ as fixed locus of a symplectic involution} 

Recall that the moduli space $\mathcal{M}_C(2, \OO{C})$ is defined as a fibre of the determinant morphism
\begin{equation}
(\det,\operatorname{tr})\colon \mathcal{M}_C(2,0)\to T^\ast \Pic^0(C). 
\end{equation}

In the specific case of rank two Higgs bundles, we can make use of the following useful characterisation:

\begin{lem}\label{characterisation-SL} A polystable pair $(E,\phi)$ in $\mathcal{M}_C(2, 0)$ represents a point in the moduli space $\mathcal{M}_C(2, \OO{C})$ if and only if $\det E \cong \OO{C}$ and the induced skew-symmetric isomorphism $f\colon  E\cong E^\ast$ fits into the following commutative diagram
\begin{equation}\label{condition-SL}
\begin{tikzcd}
E \ar[r, "\phi"] \ar[d, "f"] & E\otimes \omega_C \ar[d, "f\otimes \ide_{\omega_C}"] \\
E^\ast \ar[r, "-\phi^t"] & E^\ast \otimes \omega_C 
\end{tikzcd}
\end{equation}
\end{lem}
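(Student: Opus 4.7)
The plan is to reduce the statement to the classical identification $\splie(2,\mathbb{C}) = \slie(2,\mathbb{C})$. By construction, $\mathcal{M}_C(2, \OO{C})$ is the fibre over $(\OO{C}, 0)$ of the determinant-trace morphism $\mathcal{M}_C(2,0) \to T^\ast \Pic^0(C)$, so a polystable pair $(E, \phi)$ represents a point of $\mathcal{M}_C(2, \OO{C})$ precisely when $\det E \cong \OO{C}$ and $\operatorname{tr} \phi = 0$. Hence, assuming $\det E \cong \OO{C}$ throughout, it is enough to prove that the displayed diagram commutes if and only if $\operatorname{tr} \phi = 0$.

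The first step is to describe the skew-symmetric isomorphism $f \colon E \to E^\ast$ explicitly. A trivialisation $\det E \cong \OO{C}$ turns the wedge product $E \otimes E \to \Lambda^2 E$ into a perfect alternating pairing, and hence into an isomorphism $f$ satisfying $f^t = -f$ under the canonical identification $E^{\ast\ast} \cong E$. On a local frame $e_1, e_2$ with $e_1 \wedge e_2 = 1$, one has $f(e_1) = e_2^\ast$ and $f(e_2) = -e_1^\ast$.

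Next I would reinterpret the commutativity of the diagram. Composing $\phi$ with $f \otimes \ide_{\omega_C}$ produces a section $f\circ \phi$ of $E^\ast \otimes E^\ast \otimes \omega_C$, and a direct local computation shows that the commutativity of the diagram is equivalent to the condition that $f \circ \phi$ lies in the subsheaf $\Sym^2 E^\ast \otimes \omega_C$. In other words, $\phi$ takes values in the subsheaf of $f$-skew-adjoint endomorphisms of $E$, which is precisely the symplectic subalgebra sheaf $\splie(E, f) \subseteq \EEnd(E)$, defined pointwise by the relation recalled in Section \ref{section_local_model}.

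Finally, at every point $p\in C$, the fibre $E_p$ is a two-dimensional symplectic vector space, and an elementary check in a Darboux basis gives the equality of Lie subalgebras $\splie(E_p, f_p) = \slie(E_p)$ inside $\End(E_p)$, both being characterised by the vanishing of the trace. Sheafifying yields $\splie(E, f) = \EEnd_0(E)$, so the commutativity of the diagram is equivalent to $\phi \in H^0(C, \EEnd_0(E) \otimes \omega_C)$, i.e., $\operatorname{tr} \phi = 0$. The only subtlety is the careful bookkeeping of signs in the identification $f \circ \phi \in \Sym^2 E^\ast \otimes \omega_C$, which is routine given the explicit form of $f$ recorded above, so no genuine obstacle arises.
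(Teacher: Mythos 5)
Your proof is correct and follows essentially the same route as the paper: both arguments reduce the statement to the fibrewise observation that, on a two-dimensional symplectic vector space, an endomorphism is skew-adjoint with respect to the symplectic form precisely when it is traceless, i.e.\ the exceptional isomorphism $\splie(2,\mathbb{C})\cong\slie(2,\mathbb{C})$. Your extra bookkeeping (the explicit $f$ from the trivialisation of $\det E$ and the identification of the commutativity condition with $f\circ\phi\in\Sym^2 E^\ast\otimes\omega_C$) just spells out what the paper leaves implicit.
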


\begin{proof} This simply follows from the exceptional isomorphism $\operatorname{Sp}(2,\mathbb{C})\cong \SL(2,\mathbb{C})$ -- and the associated Lie algebras. More explicitly, if $V$ is a 2-dimensional complex vector space endowed with a skew-symmetric non-degenerate pairing $f\colon V\cong V^\ast$, then an endomorphism $\phi\in \End(V)$ satisfies $\operatorname{tr}(\phi)=0$ if and only if it is skew-symmetric with respect to $f$, i.e. the diagram
\begin{equation}
\begin{tikzcd}
V \ar[r, "\phi"] \ar[d, "f"] & V \ar[d, "f"] \\
V^\ast \ar[r, "-\phi^t"] & V^\ast 
\end{tikzcd}
\end{equation}
is commutative.
\end{proof}

It follows immediately from Lemma \ref{characterisation-SL} that we can realise $\mathcal{M}_C(2, \OO{C})$ as a connected component of the fixed locus of the involution
\begin{equation}\label{involution-SL}
(E,\phi)\mapsto (E^\ast, -\phi^t)
\end{equation}
on $\mathcal{M}_C(2,0)$, which is essentially described by the commutativity of  \eqref{condition-SL}. Note that \eqref{involution-SL} is well-defined as the composition of \eqref{minus-one-explicit} and  \eqref{duality-explicit} and is in fact a symplectic involution. 

\begin{prop}[$\mathcal{M}_C(2,\OO{C})$ as a fixed locus]\label{SL-fixed-in-GL} For a Higgs pair $(E,\phi)$ in $\mathcal{M}_C(2,0)$, the assignment
\begin{equation}
\begin{split}
\tau\colon \mathcal{M}_C(2,0) & \to \mathcal{M}_C(2,0) \\
(E,\phi) & \mapsto (E^\ast, -\phi^t)
\end{split}
\end{equation}
defines a symplectic involution of $\mathcal{M}_C(2,0)$ whose fixed locus contains $\mathcal{M}_C(2, \OO{C})$ as a connected component.
\end{prop}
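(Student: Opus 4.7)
The plan is to realise $\tau$ as the composition of two natural anti-symplectic involutions, deduce the inclusion $\mathcal{M}_C(2,\OO{C})\subseteq\operatorname{Fix}(\tau)$ from Lemma~\ref{characterisation-SL}, and isolate $\mathcal{M}_C(2,\OO{C})$ as a clopen piece by using the continuity of the determinant-trace morphism.

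First, I would write $\tau$ as the composition of the involution $(E,\phi)\mapsto(E,-\phi)$ from \eqref{minus-one-explicit} and the dualising involution $(E,\phi)\mapsto(E^\ast,\phi^t)$ from Corollary~\ref{dualising-bundle}. Both are well-defined on S-equivalence classes and both are anti-symplectic, so their composition $\tau$ is automatically a symplectic involution. The inclusion $\mathcal{M}_C(2,\OO{C})\subseteq\operatorname{Fix}(\tau)$ is then immediate from Lemma~\ref{characterisation-SL}: a polystable $(E,\phi)\in\mathcal{M}_C(2,\OO{C})$ admits a skew-symmetric isomorphism $f\colon E\to E^\ast$ making diagram~\eqref{condition-SL} commute, and this commutativity is precisely an isomorphism of Higgs pairs $(E,\phi)\cong(E^\ast,-\phi^t)=\tau(E,\phi)$.

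To show that $\mathcal{M}_C(2,\OO{C})$ is in fact a connected component of $\operatorname{Fix}(\tau)$, I would exploit the determinant-trace morphism $(\det,\operatorname{tr})\colon\mathcal{M}_C(2,0)\to T^\ast\Pic^0(C)$. Any fixed point $(E,\phi)\in\operatorname{Fix}(\tau)$ satisfies $\det E\cong\det E^\ast=(\det E)^{-1}$ and $\operatorname{tr}\phi=\operatorname{tr}(-\phi^t)=-\operatorname{tr}\phi$, so $(\det,\operatorname{tr})$ maps $\operatorname{Fix}(\tau)$ into the finite discrete subset $\Pic^0(C)[2]\times\{0\}\subseteq T^\ast\Pic^0(C)$. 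Hence $\operatorname{Fix}(\tau)$ decomposes as a disjoint union of clopen pieces indexed by 2-torsion line bundles, and the piece lying over $(\OO{C},0)$ coincides by definition with $\mathcal{M}_C(2,\OO{C})$. Since this moduli space is irreducible and hence connected, it must be a single connected component of $\operatorname{Fix}(\tau)$.

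The only mildly delicate point is checking that determinant and trace are genuine invariants of S-equivalence classes for strictly semistable Higgs pairs, so that the decomposition above is intrinsic; this is standard and follows from their additivity along Jordan--H\"older filtrations.
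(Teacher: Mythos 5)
Your proof is correct and takes essentially the same route as the paper: $\tau$ is a symplectic involution because it is the composition of the two commuting anti-symplectic involutions \eqref{minus-one-explicit} and \eqref{duality-explicit} (Corollary \ref{dualising-bundle}), and the inclusion $\mathcal{M}_C(2,\OO{C})\subseteq\operatorname{Fix}(\tau)$ is exactly the content of Lemma \ref{characterisation-SL}. Your clopen argument via $(\det,\operatorname{tr})$ merely makes explicit the connected-component step the paper treats as immediate, and it is consistent with the paper's remark that the remaining fixed points (e.g.\ $(\xi\oplus\OO{C},0)$ with $\xi\in\Pic^0(C)[2]$ nontrivial) lie over the other 2-torsion points.
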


\begin{rmk}[Fixed locus of $\tau$] Note that the Galois cover 
\begin{equation}\label{Galois-cover-Higgs-I}
\mathcal{M}_C(2, \OO{C})\times T^\ast \Pic^0(C) \to \mathcal{M}_C(2, 0)
\end{equation}
from Remark \ref{triviality-determinant-morphism-Higgs} is $\Ztwo$-equivariant with respect to the action of $\tau$ on the target and the action
\begin{equation}\label{duality-rank-one}
(\xi, \phi)\mapsto (\xi^{-1}, -\phi)
\end{equation}
on $T^\ast \Pic^0(C)$. In fact, the assignment \eqref{duality-rank-one} defines a $\Ztwo$-action on $\mathcal{M}_C(2, \OO{C})\times T^\ast \Pic^0(C)$ which commutes with the Galois action and hence descends to $\mathcal{M}_C(2, 0)$ as $\tau$. 
The fixed locus of \eqref{duality-rank-one} on $\mathcal{M}_C(2, \OO{C})\times T^\ast \Pic^0(C)$ is given by several disjoint copies of the fibre $\mathcal{M}_C(2, \OO{C})$, indexed by torsion line bundles in $\Pic^0(C)[2]$, which all map to $\mathcal{M}_C(2, \OO{C})\subseteq \mathcal{M}_C(2,0)$ under \eqref{Galois-cover-Higgs-I}. 

The induced map between fixed loci is not surjective though -- in fact \eqref{Galois-cover-Higgs-I} is not injective on $\Ztwo$-orbits, and hence is not strongly \'etale \cite[Appendix D to Ch. 1]{GIT} -- and hence the fixed locus of $\tau$ is strictly bigger than $\mathcal{M}_C(2, \OO{C})$: for instance, the semistable pairs $(\xi\oplus \OO{C}, 0)$, for $\xi \in \Pic^0(C)[2]$, are all fixed by $\tau$. 
\end{rmk}

\section{Proof of the main results}\label{section_final_proofs}

The proofs of Theorems \ref{main_thm_GL}, \ref{main_thm_SL} and \ref{reformulation} are articulated as follows:
\begin{itemize}
\item Prove Theorem \ref{main_thm_SL} by the following steps:
\begin{itemize}
\item Relate the structure of the moduli space $\mathcal{M}_C(2, \OO{C})$ around the singular point $(\OO{C}^{\oplus 2}, 0)$ in the deepest singular stratum $\Omega(\mathcal{M}_C(2, \OO{C}))$ (see Proposition \ref{sing-Higgs}) to that of the adjoint orbit $\mathcal{N}_{\leq 3}(\mathbb{C}^6)$ studied in \textsection \ref{section_local_model} by means of a two-to-one ramified cover on analytic germs (Theorem \ref{local-structure-SL})
\begin{equation}\label{crucial-double-cover}
\mu \colon (\mathcal{M}_C(2,\OO{C}), (\OO{C}^{\oplus 2}, 0)) \to (\mathcal{N}_{\leq 3}(\mathbb{C}^6), 0).
\end{equation}
\item Assuming $C$ is hyperelliptic, show that the covering involution of \eqref{crucial-double-cover} extends to a global involution of $\mathcal{M}_C(2, \OO{C})$, induced by the hyperelliptic involution (Remark \ref{covering-involution-mu} and Lemma \ref{tangent-action-sigma-tau}).
\item The quotient $\pi \colon \mathcal{M}_C(2, \OO{C})\to Y$ by the action of the hyperelliptic involution admits a stratification by symplectic leaves, and the locus of points in $Y$ where the blow-up of the reduced singular locus of $Y$ is not a crepant resolution is a union of strata (Corollary \ref{locus-non-crepant-resolution-clopen}). 
\item By exploiting the previous analysis around the point $(\OO{C}^{\oplus 2}, 0)$ in the deepest stratum, we show that each connected component of the symplectic leaves of $Y$ contains a point at which the blow-up is a crepant resolution, which concludes the proof. 
\end{itemize}
\item Reduce Theorem \ref{main_thm_GL} to Theorem \ref{main_thm_SL} by a suitable equivariant version of Remark \ref{triviality-determinant-morphism-Higgs}, as in the proof of Theorem \ref{hyperelliptic-moduli-bundles}. 
\item Theorem \ref{reformulation} follows immediately from Theorems \ref{main_thm_GL} and \ref{main_thm_SL} since $\mathcal{M}_C(2, \OO{C})$ is a connected component of the fixed locus of \eqref{involution-SL} on $\mathcal{M}_C(2,0)$ (Proposition \ref{SL-fixed-in-GL}).
\end{itemize}

The key technical result we need for the first step is given by Kaledin and Lehn's analysis of the singularities of moduli spaces of sheaves on symplectic surfaces, which also applies to Higgs bundles through the spectral correspondence. In order to state the result, we need to set up some notation about Hamiltonian reductions of symplectic vector space -- see \cite[Ch. 1]{Ginzburg}, or specifically \cite[\textsection 3.3]{KL}. 

\subsection{Hamiltonian reduction of symplectic vector spaces} Let $(V,\omega)$ be a complex symplectic vector space and $G$ a connected reductive subgroup of $\SP(V)$. The action of $G$ on $V$ is Hamiltonian and the moment map is the quadratic map given by
\begin{equation}\label{moment-map-V}
\begin{split}
Q\colon V & \to \mathfrak{g}^\ast, \\
v& \mapsto \left[ A \mapsto \frac{1}{2}\omega(Av, v) \right].
\end{split}
\end{equation}
We define the \textit{Hamiltonian reduction} of $V$ by $G$ to be the affine quotient 
\begin{equation}
V \sslash G \coloneqq Q^{-1}(0) / G,
\end{equation}
which is naturally endowed with a symplectic form on its smooth locus. 

We will be interested specifically in the following example.

\begin{ex} Assume $(V,\omega)$ is a complex symplectic vector space and $G$ is a complex reductive semisimple Lie group (we will only need the case $G=\SL(2,\mathbb{C})$). Denote by $\mathfrak{g}$ its Lie algebra, endowed with the non-degenerate Killing form $h\in \mathfrak{g}^\ast \otimes \mathfrak{g}^\ast$. In what follows, we will tacitly identify $\splie(V)$ with $\splie(V)^\ast$ by means of $\omega$, and $\mathfrak{g}$ with $\mathfrak{g}^\ast$ by means of $h$. 

The vector space $V\otimes \mathfrak{g}$ is naturally endowed with a symplectic form given by the product of $\omega$ and $h$:
\begin{equation}
\left(\sum_i v_i\otimes a_i, \sum_j w_j \otimes b_j \right) \mapsto \sum_{i,j}\omega(v_i, w_j)h(a_i, b_j). 
\end{equation}

There are two natural commuting Hamiltonian actions on $V\otimes \mathfrak{g}$, given by the adjoint action of $G$ on the right factor and the natural action of $\SP(V)$ on the left factor.

The moment map for the $G$-action
\begin{equation}\label{moment-map-G}
\begin{split}
Q_G\colon V\otimes \mathfrak{g} & \to \mathfrak{g}, \\
\sum_i v_i \otimes a_i & \mapsto \frac{1}{2}\sum_{i,j}\omega(v_i, v_j)[a_i, a_j]= \sum_{i<j}\omega(v_i,v_j) [a_i, a_j]
\end{split}
\end{equation}
is quadratic, and is given by the product of $\omega$ and the Lie bracket of $\mathfrak{g}$. 

On the other hand, the moment map for the $SP(V)$-action
\begin{equation}\label{moment-map-SP}
\begin{split}
Q_{\SP(V)}\colon V\otimes \mathfrak{g} & \to \splie(V), \\
\sum_i v_i \otimes a_i & \mapsto \frac{1}{2}\sum_{i,j}h(a_i, a_j) \omega(v_i, -)v_j
\end{split}
\end{equation}
is also quadratic, and is given by the product of the Killing form and the quadratic moment map \eqref{moment-map-V} for the $\SP(V)$-action on $V$. 
\end{ex}

\subsection{Local structure of moduli spaces of sheaves on symplectic surfaces}   Assume $(X,H)$ is a polarised symplectic surface and $E$ is a non-rigid $H$-Gieseker stable sheaf on $X$. By \cite{Mukai}, deformations of $E$ are unobstructed, so that the deformation space $\operatorname{Def}(E)$ is smooth and its tangent space $V\coloneqq \Ext^1_X(E,E)$ is a non-trivial symplectic vector space. Assume that the differential graded Lie algebra $R\Hom_X^\bullet (E,E)$ is formal -- this is satisfied if $X$ is a K3 surface by \cite{formality-conj} or if $E$ is the spectral sheaf associated to a Higgs bundle by \cite{Simpson-II}, which is our case of interest. 

Consider the polystable sheaf $F\coloneqq E\oplus E$ as a point of the moduli space $\mathcal{M}_{X,H}(\ch(F))$ of $H$-Gieseker semistable sheaves with Chern character $\ch(F)$. Note that there is a closed immersion
\begin{equation}\label{singular-locus-M2}
\begin{split}
\operatorname{Sym}^2 \mathcal{M}_{X,H}(\ch(E)) & \to \mathcal{M}_{X,H}(\ch(F)), \\
(E_1, E_2) & \mapsto E_1 \oplus E_2,
\end{split}
\end{equation} 
and the point corresponding to $F$ lies in the image of the diagonal $\mathcal{M}_{X,H}(\ch(E))\subseteq \operatorname{Sym}^2 \mathcal{M}_{X,H}(\ch(E))$.

When $\ch(E)$ is primitive and $H$ is generic,  \eqref{singular-locus-M2} describes exactly the singular locus of $\mathcal{M}_{X,H}(\ch(F))$, at least when $\dim V \geq 4$. In general, the singular locus might have several irreducible components, corresponding to the possible Jordan--H\"older decompositions of polystable sheaves in $\mathcal{M}_{X,H}(\ch(F))$.  

The following crucial result gives an explicit description of an analytic neighbourhood of the moduli space $\mathcal{M}_{X,H}(\ch(F))$ around the point $F$:

\begin{prop}[Local structure of $\mathcal{M}_{X,H}(\text{ch}(F))$, {\cite[Prop. 2.2]{KL}}]\label{Kaledin-Lehn-crucial} 
There is an isomorphism of analytic germs of symplectic varieties
\begin{equation}\label{isomorphism-of-germs}
(\mathcal{M}_{X,H}(\ch(F)), F) \cong (V \otimes  \gl(2, \mathbb{C}) \sslash \GL(2,\mathbb{C}), 0)
\end{equation}
where the right-hand side is the Hamiltonian reduction with respect to the adjoint action on the right factor of $V \otimes  \gl(2, \mathbb{C})$.  

Moreover there is a map of analytic germs of symplectic varieties
\begin{equation} \label{moment-map-Sp}
\mu \colon (\mathcal{M}_{X,H}(\ch(F)), F) \to (V \times \mathcal{N}_{\leq 3}(V), (0,0))
\end{equation}
with the following properties:
\begin{enumerate}[label={\upshape(\roman*)}]
\item $\mu$ is a two-to-one \'etale cover over the locus $V \times \mathcal{N}_{3}(V)$.
\item $\mu$ is one-to-one over the locus $V \times \mathcal{N}_{\leq 2}(V)$.
\item $\mu$ identifies the subvariety $\operatorname{Sym}^2 \mathcal{M}_{X,H}(\ch(E))$ given by \eqref{singular-locus-M2} with $V \times \mathcal{N}_{\leq 1}(V)$ and its diagonal with $V \times \{0\}$.
\end{enumerate}
\end{prop}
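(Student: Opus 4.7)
The plan is to use deformation theory à la Kuranishi for the strictly polystable sheaf $F=E\oplus E$ in the moduli space, and exploit the formality of the dg Lie algebra $R\Hom_X^\bullet(F,F)$ to reduce the analytic germ $(\mathcal{M}_{X,H}(\ch(F)),F)$ to the GIT quotient of the zero locus of a purely quadratic obstruction map on $\Ext^1_X(F,F)$ by $\Aut(F)=\GL(2,\mathbb{C})$. The identification of this quadratic map with a moment map in the sense of \eqref{moment-map-G} is what realises the right-hand side of \eqref{isomorphism-of-germs} as a Hamiltonian reduction. The second, more geometric, half of the statement would then be obtained by applying the commuting $\SP(V)$-moment map \eqref{moment-map-SP} after splitting off the center of $\gl(2,\mathbb{C})$.

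\textbf{Formality and the Kuranishi model.} Since $F=E\oplus E$, one computes $\Ext^1_X(F,F)\cong \Ext^1_X(E,E)\otimes \End(\mathbb{C}^2)=V\otimes \gl(2,\mathbb{C})$ and, under this identification, Mukai's symplectic pairing is the product of the symplectic form $\omega$ on $V$ and the trace pairing on $\gl(2,\mathbb{C})$. Formality of $R\Hom_X^\bullet(F,F)$, which holds under our standing hypothesis on $E$, reduces the germ of the moduli functor at $F$ to the $\GL(2,\mathbb{C})$-equivariant germ at $0$ of the zero locus of the Yoneda square map $\operatorname{ob}\colon \Ext^1_X(F,F)\to \Ext^2_X(F,F)$. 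A direct computation identifies $\operatorname{ob}$ with the moment map $Q_{\GL(2,\mathbb{C})}$ of \eqref{moment-map-G}, once one uses Serre duality $\Ext^2_X(F,F)\cong \Hom_X(F,F)^\ast\cong \gl(2,\mathbb{C})^\ast$. Passing to the GIT quotient by $\GL(2,\mathbb{C})=\Aut(F)$ then produces the isomorphism \eqref{isomorphism-of-germs}.

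\textbf{Construction of $\mu$ and its fibres.} Decomposing $\gl(2,\mathbb{C})=\mathbb{C}\oplus \slie(2,\mathbb{C})$, the scalar centre acts trivially on $V\otimes \gl(2,\mathbb{C})$, so the Hamiltonian reduction splits off a symplectic factor $V$ and reduces the problem to analysing $V\otimes \slie(2,\mathbb{C})\sslash \SL(2,\mathbb{C})$. The map $\mu$ of \eqref{moment-map-Sp} is then obtained by coupling the identity on the first factor with the $\SP(V)$-moment map \eqref{moment-map-SP} restricted to $V\otimes \slie(2,\mathbb{C})$; $\SL(2,\mathbb{C})$-invariance of $Q_{\SP(V)}$ ensures that $\mu$ descends to the quotient. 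A direct calculation shows that for $\phi=\sum_i v_i\otimes a_i \in Q_{\SL(2,\mathbb{C})}^{-1}(0)$, the endomorphism $B=Q_{\SP(V)}(\phi)$ satisfies $B^2=0$ and has rank at most $\dim \slie(2,\mathbb{C})=3$, so that indeed $B\in \mathcal{N}_{\leq 3}(V)$. Conversely, thinking of $\phi$ as a linear map $\slie(2,\mathbb{C})^\ast\to V$, the element $B$ plays the role of $\phi\phi^t$, and the classical fact that every 2-nilpotent $B\in \splie(V)$ of rank $r$ is of this form for a unique $\phi$ up to $O(r)$ proves surjectivity onto $\mathcal{N}_{\leq 3}(V)$. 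Properties (i)--(iii) then follow: the adjoint representation identifies $\SL(2,\mathbb{C})/\{\pm 1\}\cong SO(3)$, so that the residual $\mathbb{Z}/2=O(3)/SO(3)$ on the top stratum accounts for $\mu$ being étale two-to-one there, the failure of this symmetry on smaller-rank loci explains the one-to-one behaviour over $\mathcal{N}_{\leq 2}(V)$, and the locus of decomposable Higgs pairs $E_1\oplus E_2$ corresponds to $\phi$ factoring through a one-dimensional subspace of $\slie(2,\mathbb{C})$, which is exactly $V\times \mathcal{N}_{\leq 1}(V)$, with the diagonal $\mathcal{M}_{X,H}(\ch(E))\hookrightarrow \Sym^2 \mathcal{M}_{X,H}(\ch(E))$ corresponding to $\phi=0$.

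\textbf{Main obstacle.} The most delicate step is the formality input together with the identification of the Yoneda square with the Hamiltonian moment map in an $\SP(V)\times \GL(2,\mathbb{C})$-equivariant way; once this is available, the rest is explicit linear symplectic geometry. The careful bookkeeping of the two-to-one nature of $\mu$ on the top stratum, i.e.\ tracking the extra $\mathbb{Z}/2$ from $O(3)/SO(3)$ which is not absorbed by the $\SL(2,\mathbb{C})$-quotient, is what makes the covering involution of $\mu$ non-trivial, and it is this involution that will be crucial in the subsequent sections to match the hyperelliptic involution on $\mathcal{M}_C(2,\OO{C})$ with the local model analysed in Section~\ref{section_local_model}.
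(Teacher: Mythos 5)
Your proposal is correct and follows essentially the same route as the paper, which imports the germ isomorphism \eqref{isomorphism-of-germs} from Kaledin--Lehn and then builds $\mu$ exactly as you do, via the trace decomposition $\gl(2,\mathbb{C})=\mathbb{C}\oplus\slie(2,\mathbb{C})$ and the commuting $\SP(V)$-moment map descending to the Hamiltonian reduction. The only difference is that you additionally sketch the ingredients the paper delegates to the citation (the formality/Kuranishi identification of the quadratic obstruction with the $\GL(2,\mathbb{C})$-moment map, and the $O(3)/SO(3)$ analysis of the fibres giving (i)--(iii)), and these sketches are consistent with the cited arguments.
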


Assuming the existence of \eqref{isomorphism-of-germs}, we recall how to construct \eqref{moment-map-Sp}, as it will be useful later on. By means of the trace map $\operatorname{tr}\colon \gl(2, \mathbb{C})\to \mathbb{C}$ one decomposes
\begin{equation}\label{decomposition-trace-map}
V \otimes  \gl(2, \mathbb{C}) \sslash \GL(2,\mathbb{C}) \cong V \times (V \otimes  \mathfrak{sl}(2, \mathbb{C}) \sslash \operatorname{SL}(2,\mathbb{C})),
\end{equation}
so that we will define $\mu\colon V \otimes  \mathfrak{sl}(2, \mathbb{C}) \sslash \operatorname{SL}(2,\mathbb{C})\to \mathcal{N}_{\leq 3}(V)$ and then get \eqref{moment-map-Sp} by simply multiplying by $\ide_V$.

In order to do so, we consider the Hamiltonian $\SP(V)$-action on the left factor of $V \otimes  \mathfrak{sl}(2, \mathbb{C})$, which commutes with the $\operatorname{SL}(2,\mathbb{C})$-action. The map we are looking for is precisely the corresponding moment map \eqref{moment-map-SP}:
\begin{equation}
\mu\colon V \otimes  \mathfrak{sl}(2, \mathbb{C}) \to \mathfrak{sp}(V),
\end{equation}
given by the product of the Killing form on $\mathfrak{sl}(2, \mathbb{C})$ and the quadratic $\SP(V)$-moment map on $V$, which descends to the Hamiltonian reduction as the two actions commute. 

\begin{rmk}[Residual action of $\SP(V)$ on $V \otimes  \mathfrak{sl}(2, \mathbb{C}) \sslash \operatorname{SL}(2,\mathbb{C})$]\label{covering-involution-mu} Since the actions of $\SP(V)$ and $\operatorname{SL}(2,\mathbb{C})$ on $V\otimes \mathfrak{sl}(2, \mathbb{C})$ commute, the former descends to the Hamiltonian reduction $V \otimes  \mathfrak{sl}(2, \mathbb{C}) \sslash \operatorname{SL}(2,\mathbb{C})$, and the moment map
\begin{equation}
\mu\colon V \otimes  \mathfrak{sl}(2, \mathbb{C}) \sslash \operatorname{SL}(2,\mathbb{C}) \to \mathcal{N}_{\leq 3}(V)
\end{equation}
is $\SP(V)$-equivariant. The involution $-\ide_V\in \SP(V)$ acts trivially on the target $\mathcal{N}_{\leq 3}(V)$, but non-trivially on the Hamiltonian reduction $V \otimes  \mathfrak{sl}(2, \mathbb{C}) \sslash \operatorname{SL}(2,\mathbb{C})$, and is indeed the covering involution associated with the map $\mu$ \cite[\textsection 4]{KL}. 
\end{rmk}

\subsection{Local structure of $\mathcal{M}_C(2,0)$ and $\mathcal{M}_C(2, \OO{C})$} 
Assume $C$ is a smooth curve of  genus $g\geq 2$.  By Proposition \ref{stratification-symplectic-leaves}, the local structure of the moduli spaces $\mathcal{M}_C(2,0)$ and $\mathcal{M}_C(2, \OO{C})$ of Higgs bundles around a point only depends on the connected component of the open stratum in Proposition \ref{sing-Higgs} the point belongs to.  We will hence focus on the point corresponding to the pair
\begin{equation}
(\OO{C}^{\oplus 2}, 0)=(\OO{C}, 0)\oplus (\OO{C}, 0),
\end{equation}
which lies in the deepest stratum $\Omega(\mathcal{M}_C(2,\OO{C}))\subseteq \Omega(\mathcal{M}_C(2,0))$ of the moduli spaces. 

\begin{rmk}[Action of ${\Pic^0(C)[2]}$ on $\mathcal{M}_C(2,\OO{C})$] \label{action-of-2-torsion} The torsion subgroup $\Pic^0(C)[2]$ of $\Pic^0(C)$ acts on $\mathcal{M}_C(2,\OO{C})$ by tensor product, and the action is transitive on the deepest stratum $\Omega(\mathcal{M}_C(2,\OO{C}))$. In particular any two points in $\Omega(\mathcal{M}_C(2,\OO{C}))$ have isomorphic analytic neighbourhoods in $\mathcal{M}_C(2,\OO{C})$.
\end{rmk}

We will make use of Theorem \ref{spectral-correspondence} and look at Higgs bundles as pure one-dimensional sheaves on the quasi-projective symplectic surface $S$, so that we can apply Proposition \ref{Kaledin-Lehn-crucial} to get a local description of the singularities of the moduli spaces. Note that the projective surface $\overline{S}$ is not symplectic, but since we only consider sheaves supported on the open subset $S$ and Mukai's construction of the symplectic structure is purely local, Proposition \ref{Kaledin-Lehn-crucial} still applies. In other words, the first order deformation space of the only stable summand $(\OO{C}, 0)$
\begin{equation}\label{splitting-tangent-space-rk1}
V\coloneqq \Ext^1_{\overline{S}}(\OO{C}, \OO{C})\cong H^1(C, \OO{C})\oplus H^0(C, \omega_C)
\end{equation}
is still a $2g$-dimensional symplectic vector space by Serre duality. Note that $V$ is the tangent space at $(\OO{C}, 0)$ of the moduli space $\mathcal{M}_C(1,0)$ and the direct sum decomposition \eqref{splitting-tangent-space-rk1} is induced by the fibre product decomposition $\mathcal{M}_C(1,0)\cong \Pic^0(C)\times H^0(C, \omega_C)$.

\begin{prop}[Local structure of $\mathcal{M}_C(2,0)$]\label{local-structure-GL} There is an isomorphism of analytic germs of symplectic varieties
\begin{equation}\label{isomorphism-of-germs-GL}
(\mathcal{M}_C(2,0), (\OO{C}^{\oplus 2}, 0)) \cong (V\times (V \otimes  \slie(2, \mathbb{C}) \sslash \SL(2,\mathbb{C})), (0,0) )
\end{equation}
where the right-hand side is the Hamiltonian reduction with respect to the adjoint action on the right factor of $V \otimes  \slie(2, \mathbb{C})$. 

Moreover there is a map of analytic germs of symplectic varieties
\begin{equation} 
\mu \colon (\mathcal{M}_C(2,0), (\OO{C}^{\oplus 2}, 0)) \to (V \times \mathcal{N}_{\leq 3}(V), (0, 0))
\end{equation}
with the following properties:
\begin{enumerate}[label={\upshape(\roman*)}]
\item $\mu$ is a two-to-one \'etale cover over the locus $V \times \mathcal{N}_{3}(V)$.
\item $\mu$ is one-to-one over the locus $V \times \mathcal{N}_{\leq 2}(V)$.
\item $\mu$ identifies the closed stratum $\Sigma(\mathcal{M}_C(2,0))$ with $V \times \mathcal{N}_{\leq 1}(V)$ and $\Omega(\mathcal{M}_C(2,0))$ with $V \times \{0\}$.
\end{enumerate}
\end{prop}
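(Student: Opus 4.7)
The plan is to identify the point $(\OO{C}^{\oplus 2}, 0)$ with the polystable sheaf $F = E\oplus E$ on $\overline{S}$, where $E = \OO{C}$ is the structure sheaf of the zero section $C \subseteq S\subseteq \overline{S}$, via the spectral correspondence of Theorem \ref{spectral-correspondence}, and then to apply Kaledin--Lehn's Proposition \ref{Kaledin-Lehn-crucial}. That proposition gives an isomorphism of analytic germs $(\mathcal{M}_{\overline{S},H}(\ch(F)), F) \cong (V\otimes \gl(2,\mathbb{C})\sslash \GL(2,\mathbb{C}), 0)$, and decomposing $\gl(2,\mathbb{C}) = \mathbb{C}\cdot \ide \oplus \slie(2,\mathbb{C})$ through the trace map as in \eqref{decomposition-trace-map} immediately produces the desired splitting $V\times (V\otimes \slie(2,\mathbb{C})\sslash \SL(2,\mathbb{C}))$ appearing in \eqref{isomorphism-of-germs-GL}.

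Next, I would verify the hypotheses of Proposition \ref{Kaledin-Lehn-crucial}. The summand $E = \OO{C}$ corresponds under the spectral correspondence to the stable Higgs bundle $(\OO{C}, 0)$, hence is $H$-Gieseker stable. Its first extension space, computed either as the hypercohomology of the Higgs complex $[\OO{C}\to \OO{C}\otimes \omega_C]$ with zero differential, or via the local-to-global spectral sequence using $\mathcal{E}xt^1_{\overline{S}}(\OO{C}, \OO{C}) \cong N_{C/\overline{S}} \cong \omega_C$, equals $V \coloneqq H^1(C, \OO{C})\oplus H^0(C, \omega_C)$, the $2g$-dimensional symplectic vector space appearing in the statement. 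Formality of $R\HHom_{\overline{S}}^\bullet(E, E)$ follows from Simpson's result for Higgs complexes, as recalled just before Proposition \ref{Kaledin-Lehn-crucial}. The one genuine obstacle is that $\overline{S}$ is not symplectic; this will be handled by observing that every sheaf parametrised by $\mathcal{M}_C(2,0)\subseteq \mathcal{M}_{\overline{S},H}(0, 2[C], 2-2g)$ is supported inside the symplectic open subset $S\subseteq \overline{S}$, while both Mukai's construction of the symplectic form on $V$ and the Kaledin--Lehn normal-form argument depend only on the formal neighbourhood of the support of $F$, so they transfer unchanged to the present setting.

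Finally, the map $\mu$ is constructed exactly as in the discussion following Proposition \ref{Kaledin-Lehn-crucial}: one applies the quadratic $\SP(V)$-moment map \eqref{moment-map-SP} to the $\slie(2,\mathbb{C})$-factor of the Hamiltonian reduction, which descends because the $\SP(V)$- and $\SL(2,\mathbb{C})$-actions commute, and multiplies by $\ide_V$ on the first factor, landing in $V\times \mathcal{N}_{\leq 3}(V)$. Properties (i) and (ii) are then verbatim translations of the corresponding items of Proposition \ref{Kaledin-Lehn-crucial}. For property (iii), the spectral correspondence identifies $\mathcal{M}_{\overline{S},H}(\ch(E))$ with $\mathcal{M}_C(1,0)\cong T^\ast \Pic^0(C)$, so $\Sym^2 \mathcal{M}_{\overline{S},H}(\ch(E))$ is precisely the stratum $\Sigma(\mathcal{M}_C(2,0))\cong \Sym^2 T^\ast \Pic^0(C)$ of Proposition \ref{sing-Higgs}, and its diagonal is the deepest stratum $\Omega(\mathcal{M}_C(2,0))$.
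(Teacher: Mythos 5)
Your proposal is correct and follows essentially the same route as the paper: view $(\OO{C}^{\oplus 2},0)$ as the polystable spectral sheaf $\OO{C}\oplus\OO{C}$ on $\overline{S}$ via Theorem \ref{spectral-correspondence}, apply Proposition \ref{Kaledin-Lehn-crucial} (with formality supplied by Simpson and the non-symplecticity of $\overline{S}$ handled by locality of Mukai's construction on $S$), split off the trace part as in \eqref{decomposition-trace-map}, and read off $\mu$ and the strata identifications from Proposition \ref{sing-Higgs}. The extra verification of $V\cong H^1(C,\OO{C})\oplus H^0(C,\omega_C)$ via the Higgs complex or the normal bundle of the zero section is a correct elaboration of a point the paper states without computation.
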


By restricting to the trace-free part, we get an analogous description for the moduli space $\mathcal{M}_{C}(2,\OO{C})$:

\begin{prop}[Local structure of $\mathcal{M}_C(2,\OO{C})$, {\cite[Lemma 3.3]{Kiem}}]\label{local-structure-SL} 
With the notation of Proposition \ref{local-structure-GL}, there is an isomorphism of analytic germs of symplectic varieties
\begin{equation}\label{isomorphism-of-germs-SL}
(\mathcal{M}_C(2,\OO{C}), (\OO{C}^{\oplus 2}, 0)) \cong (V \otimes  \slie(2, \mathbb{C}) \sslash \SL(2,\mathbb{C}), 0),
\end{equation}
and a map of analytic germs of symplectic varieties
\begin{equation} 
\mu \colon (\mathcal{M}_C(2,\OO{C}), (\OO{C}^{\oplus 2}, 0)) \to (\mathcal{N}_{\leq 3}(V), 0)
\end{equation}
with the following properties:
\begin{enumerate}[label={\upshape(\roman*)}]
\item $\mu$ is a two-to-one \'etale cover over the locus $V \times \mathcal{N}_{3}(V)$.
\item $\mu$ is one-to-one over the locus $V \times \mathcal{N}_{\leq 2}(V)$.
\item $\mu$ identifies the closed stratum $\Sigma(\mathcal{M}_C(2,0))$ with $\mathcal{N}_{\leq 1}(V)$.
\end{enumerate}
\end{prop}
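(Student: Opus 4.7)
The plan is to deduce Proposition \ref{local-structure-SL} from Proposition \ref{local-structure-GL} by passing to the fibre over $(\OO{C}, 0)$ of the determinant-trace morphism $(\det, \operatorname{tr}) \colon \mathcal{M}_C(2, 0) \to T^\ast \Pic^0(C)$, whose fibre is by definition $\mathcal{M}_C(2, \OO{C})$. The key is to identify the factor $V$ appearing on the right-hand side of \eqref{isomorphism-of-germs-GL} with the base of this morphism, and to show that the isomorphism of Proposition \ref{local-structure-GL} is compatible with $(\det, \operatorname{tr})$.

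First I would analyse the product decomposition \eqref{decomposition-trace-map}. The trace splitting $\gl(2, \mathbb{C}) = \mathbb{C} \cdot \ide \oplus \slie(2, \mathbb{C})$ induces
\begin{equation*}
V \otimes \gl(2, \mathbb{C}) \cong V \oplus (V \otimes \slie(2, \mathbb{C})),
\end{equation*}
and since $\mathbb{C} \cdot \ide$ is central the $\GL(2, \mathbb{C})$-moment map vanishes on the first summand and restricts to the $\SL(2, \mathbb{C})$-moment map on the second; since $\GL(2, \mathbb{C})$ acts trivially on the trace factor, forming the symplectic reduction yields \eqref{decomposition-trace-map}. Under \eqref{splitting-tangent-space-rk1} the first factor $V$ on the right-hand side of \eqref{isomorphism-of-germs-GL} is canonically the tangent space to $T^\ast \Pic^0(C)$ at $(\OO{C}, 0)$, and the natural projection onto it matches the infinitesimal version of $(\det, \operatorname{tr})$: on $\Ext^1_{\overline{S}}(F, F) \cong V \otimes \gl(2, \mathbb{C})$ this is just the trace on $\Ext$-groups. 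Taking fibres over the origin on both sides then yields \eqref{isomorphism-of-germs-SL}.

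The map $\mu$ in the $\SL$-statement is then defined as the restriction of the $\GL$-version from Proposition \ref{local-structure-GL} to this fibre -- equivalently, as the $\SP(V)$-moment map on $V \otimes \slie(2, \mathbb{C}) \sslash \SL(2, \mathbb{C})$ discussed in Remark \ref{covering-involution-mu}. Properties (i)--(iii) follow by intersecting the strata $V \times \mathcal{N}_r(V)$ in the target of the $\GL$-map with the fibre $\{0\} \times \mathcal{N}_{\leq 3}(V)$; the identification $\Sigma(\mathcal{M}_C(2, 0)) \cap \mathcal{M}_C(2, \OO{C}) = \Sigma(\mathcal{M}_C(2, \OO{C}))$ required for (iii) is precisely the content of Proposition \ref{sing-Higgs}.

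The main obstacle is the compatibility step: showing that the analytic isomorphism of Proposition \ref{local-structure-GL} can be chosen so as to intertwine the projection to the first $V$-factor with $(\det, \operatorname{tr})$ -- i.e., that Kaledin-Lehn's local model is natural with respect to the trace. This is a naturality statement about their Kuranishi construction: both sides are governed by the $L_\infty$-structure on $R\Hom_{\overline{S}}^\bullet(F, F)$, and the trace is a morphism of $L_\infty$-algebras onto the abelian dg-algebra $R\Gamma(C, \OO{C}) \oplus R\Gamma(C, \omega_C)[-1]$. Under the formality hypothesis used in Proposition \ref{Kaledin-Lehn-crucial}, one can choose a formal identification compatible with the trace, so that the cohomological trace decomposition propagates to an analytic product decomposition of germs; this naturality is essentially worked out in \cite[Lemma 3.3]{Kiem}.
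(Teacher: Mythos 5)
Your proposal takes essentially the same route as the paper: the paper gives no separate argument for this proposition, deducing it from Proposition \ref{local-structure-GL} by ``restricting to the trace-free part'' of the decomposition \eqref{decomposition-trace-map} and citing Kiem's Lemma 3.3 for the fixed-determinant local structure, which is precisely the compatibility of the Kaledin--Lehn germ isomorphism with $(\det,\operatorname{tr})$ that you isolate as the main obstacle and likewise defer to that reference. Your additional details (the trace splitting of the Hamiltonian reduction, matching the first $V$-factor with $T_{(\OO{C},0)}T^\ast\Pic^0(C)$, and obtaining (i)--(iii) by restricting the $\GL$-version of $\mu$, with Proposition \ref{sing-Higgs} handling the singular-locus identification) are consistent with the constructions of Section \ref{section_final_proofs}, so the argument is sound.
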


\begin{ex}[Symplectic resolution for genus two] When $g=2$, we see that locally around $(\OO{C}^{\oplus 2}, 0)$ the moduli spaces $\mathcal{M}_C(2,0)$ and $\mathcal{M}_C(2,\OO{C})$ look like $V\times \mathcal{N}_{\leq 2}(V)$ and $\mathcal{N}_{\leq 2}(V)$ respectively, with $\dim V=4$ (there is no ramification in this case!). In particular, Theorem \ref{blow-up-nilpotent-cone} implies that blowing-up the reduced singular locus gives a crepant resolution (compare with Theorem \ref{singularities-GL-Higgs}). 
\end{ex}

\subsection{Hyperelliptic action on moduli spaces of Higgs bundles}

Assume $C$ is a smooth hyperelliptic curve of genus $g\geq 3$ and let $\sigma\in \Aut(C)$ be the hyperelliptic involution. We are interested in the following symplectic involution  
\begin{equation}\label{dualising+hyperelliptic-Higgs}
\begin{split}
\sigma\circ \tau\colon \mathcal{M}_C(2,0) & \to \mathcal{M}_C(2,0) \\
(E, \phi) & \mapsto (\sigma^\ast E^\ast, -\sigma^\ast \phi^t),
\end{split}
\end{equation}
given by the composition of the two commuting symplectic involutions \eqref{lifting-involution-Higgs} -- which we still denote by $\sigma$ -- and $\tau$ from Proposition \ref{SL-fixed-in-GL}. Note that it preserves the closed subscheme $\mathcal{M}_C(2, \OO{C})$ and the induced action is simply given by $\sigma$, as $\tau$ acts trivially on $\mathcal{M}_C(2, \OO{C})$.

\begin{lem}[Infinitesimal action of $\sigma\circ \tau$]\label{tangent-action-sigma-tau} The symplectic involution \eqref{dualising+hyperelliptic-Higgs} fixes the point $(\OO{C}^{\oplus 2}, 0)$. Via the isomorphism \eqref{isomorphism-of-germs-GL} it acts trivially on the first factor and as $-\ide_V$ on the second factor. 
\end{lem}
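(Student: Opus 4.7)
The point $(\mathcal{O}_C^{\oplus 2}, 0)$ is clearly fixed by $\sigma\circ\tau$, since $\sigma^\ast \mathcal{O}_C\cong \mathcal{O}_C$, the bundle $\mathcal{O}_C^{\oplus 2}$ is self-dual, and the Higgs field is zero. The strategy for the infinitesimal statement is to use the two natural factors of the local model separately: the first factor $V$ corresponds, via the trace-determinant decomposition \eqref{decomposition-trace-map}, to tangent directions of the determinant morphism $(\det,\operatorname{tr})\colon \mathcal{M}_C(2,0)\to T^\ast \Pic^0(C)$, while the second factor $V\otimes \slie(2,\mathbb{C})\sslash \SL(2,\mathbb{C})$ is the local model of $\mathcal{M}_C(2,\OO{C})$ at $(\mathcal{O}_C^{\oplus 2},0)$ coming from Proposition \ref{local-structure-SL}.

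For the first factor, the morphism $(\det,\operatorname{tr})$ is $\Ztwo$-equivariant with respect to $\sigma\circ\tau$ and the involution of $T^\ast \Pic^0(C)$ given by $(\xi,\phi)\mapsto ((\sigma^\ast \xi)^{-1},-\sigma^\ast \phi)$. Its differential at $(\mathcal{O}_C,0)$ is the endomorphism of $V=H^1(C,\mathcal{O}_C)\oplus H^0(C,\omega_C)$ obtained by composing $-\mathrm{id}$ with the pull-back $\sigma^\ast$ on each summand. On $H^1(C,\mathcal{O}_C)$, Lemma \ref{action-on-Pic} forces $\sigma^\ast=-\mathrm{id}$, since the tangent of $\xi\mapsto \sigma^\ast\xi$ at the origin must equal the tangent of inversion on $\Pic^0(C)$. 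On $H^0(C,\omega_C)$, Example \ref{equiv-omega-hyper} asserts that the natural equivariant structure induces $-\mathrm{id}$ on global sections. Consequently the differential of the induced involution on $T^\ast \Pic^0(C)$ is $(-\mathrm{id})\circ(-\mathrm{id})=+\mathrm{id}$ on each summand, proving triviality on the first factor.

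For the second factor, I would use Proposition \ref{SL-fixed-in-GL}, which gives that $\tau$ acts trivially on $\mathcal{M}_C(2,\OO{C})$; therefore $\sigma\circ \tau$ restricts to $\sigma$ on the second factor. The Kaledin--Lehn isomorphism of Proposition \ref{local-structure-SL} is constructed entirely out of the deformation theory of the stable summand $(\mathcal{O}_C,0)$ as a Higgs bundle (via the formality of $R\Hom^\bullet$), and is therefore functorial under any automorphism of $C$ that preserves $(\mathcal{O}_C,0)$. Since $\sigma^\ast(\mathcal{O}_C,0)\cong (\mathcal{O}_C,0)$, the involution $\sigma$ acts on $V=\Ext^1_{\overline{S}}(\mathcal{O}_C,\mathcal{O}_C)\cong H^1(C,\mathcal{O}_C)\oplus H^0(C,\omega_C)$ by precisely the same $-\mathrm{id}$ computation as above, yielding an element $-\mathrm{id}_V\in \SP(V)$. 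The induced action on the Hamiltonian reduction $V\otimes \slie(2,\mathbb{C})\sslash \SL(2,\mathbb{C})$ is then the one from Remark \ref{covering-involution-mu}, namely the covering involution of $\mu$.

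The only non-routine step is the functoriality claim used in the last paragraph, i.e.\ that the Kaledin--Lehn identification in Proposition \ref{local-structure-SL} can be chosen compatibly with the $\sigma$-action on the stable summand. This is essentially a naturality statement for the formal model of a deformation functor, and should follow from the fact that $\sigma$ acts on the whole DGLA $R\Hom^\bullet_{\overline{S}}((\mathcal{O}_C,0),(\mathcal{O}_C,0))\otimes \slie(2,\mathbb{C})$ and that the formality quasi-isomorphism can be chosen equivariantly for a cyclic group action; I would refer to the relevant construction in \cite{KL} and verify that the reduction procedure commutes with $\sigma^\ast$ by inspection.
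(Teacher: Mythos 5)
Your proposal is correct, and it reaches the same conclusion from the same basic inputs (Lemma \ref{action-on-Pic}, the choice of equivariant structure on $\omega_C$ giving $-\ide$ on $H^0(C,\omega_C)$, and Remark \ref{covering-involution-mu}), but your handling of the second factor is a genuinely different route from the paper's. The paper keeps $\sigma$ and $\tau$ together: it observes that $\sigma$ and $\tau$ coincide on $\mathcal{M}_C(1,0)$, so $\sigma\circ\tau$ is the identity there and acts trivially on $V$; then it computes that $\sigma\circ\tau$ acts on $V\otimes\slie(2,\mathbb{C})$ as $\ide_V\otimes(-)^t$ and uses the fact that every $a\in\slie(2,\mathbb{C})$ is $\SL(2,\mathbb{C})$-conjugate to $-a^t$ to conclude that the induced map on the Hamiltonian reduction is $-\ide$. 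You instead invoke Proposition \ref{SL-fixed-in-GL} to replace $\sigma\circ\tau$ by $\sigma$ on $\mathcal{M}_C(2,\OO{C})$ and then only need the action of $\sigma$ on $V$, namely $-\ide_V\in\SP(V)$ with trivial action on the multiplicity factor, so the residual action on the reduction is the covering involution by Remark \ref{covering-involution-mu}; this trades the transposition-plus-conjugacy bookkeeping in $\slie(2,\mathbb{C})$ for the already-established global fact that $\tau$ fixes $\mathcal{M}_C(2,\OO{C})$ pointwise, which is arguably cleaner and less sign-sensitive. Two remarks: your first-factor argument silently uses that the $V$-factor of \eqref{decomposition-trace-map} is the $(\det,\operatorname{tr})$-direction, whereas the paper's shortcut (the actions of $\sigma$ and $\tau$ agree on $\mathcal{M}_C(1,0)$, so their composite is literally the identity on rank-one Higgs bundles) avoids even the tangent computation; and the equivariance of the Kaledin--Lehn germ isomorphism that you single out as the only non-routine step is indeed needed, but the paper's proof relies on exactly the same implicit functoriality when it transports $\sigma$ and $\tau$ to linear actions on $V\otimes\slie(2,\mathbb{C})$, so you are not missing anything the paper supplies, and your proposed fix (naturality of the construction in \cite{KL} together with an equivariant choice of formality quasi-isomorphism for the $\Ztwo$-action) is the right one.
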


\begin{proof} First note that the actions of $\sigma$ and $\tau$ agree on the moduli space $\mathcal{M}_C(1, 0)$ of rank one Higgs bundles, by Lemma \ref{action-on-Pic}. In particular, one has $\sigma\circ \tau=\ide$ on $\mathcal{M}_C(1, 0)$ and the induced action on $V$ is trivial. 

In particular, $\sigma\circ \tau$ fixes the point $(\OO{C}^{\oplus 2}, 0)$. For the induced action on $V \otimes  \slie(2, \mathbb{C}) \sslash \SL(2,\mathbb{C})$ via \eqref{isomorphism-of-germs-GL}, note that $\sigma$ acts trivially on $\slie(2, \mathbb{C})$ while $\tau$ acts as transposition on $\slie(2, \mathbb{C})$, so that $\sigma \circ \tau$ acts as $\ide_V \otimes (-)^t$ on $V \otimes  \slie(2, \mathbb{C})$. We notice that every $a\in \slie(2, \mathbb{C})$ is $\SL(2,\mathbb{C})$-conjugate to $-a^t$. As a consequence, $\sigma\circ\tau$ acts as $-\ide$ on the quotient $V \otimes  \slie(2, \mathbb{C}) \sslash \SL(2,\mathbb{C})$, and the claim follows. 
\end{proof}

It follows from Lemma \ref{tangent-action-sigma-tau} that the involution $\sigma$, acting on $\mathcal{M}_C(2, \OO{C})$ -- which is fixed by $\tau$ --, coincides with the covering involution of $\mu$ according to Remark \ref{covering-involution-mu} in an analytic neighbourhood of $(\OO{C}^{\oplus 2}, 0)$. In particular, locally around $(\OO{C}^{\oplus 2}, 0)$ the quotient of $\mathcal{M}_C(2, \OO{C})$ by the $\Ztwo$-action generated by $\sigma$ admits a crepant resolution given by blowing-up the reduced singular locus $\Sigma(\mathcal{M}_C(2, \OO{C}))$ by Theorem \ref{blow-up-nilpotent-cone}. \\

Having taken care of points in the singular locus, one has to further investigate the local behaviour of $\sigma \circ \tau$ around fixed smooth points of $\mathcal{M}_C(2,0)$, in order to get an explicit description of the singular locus of the quotient. It will be enough to focus on $\mathcal{M}_C(2, \OO{C})$:

\begin{lem}[Fixed locus of $\sigma$ in $\mathcal{M}_C(2, \OO{C})$]\label{fixed-locus-of-sigma} The fixed locus $\operatorname{Fix}(\sigma) \subseteq \mathcal{M}_C(2,\OO{C})$ contains the singular locus $\Sigma(\mathcal{M}_C(2,\OO{C}))$ and is pure of dimension $4g-2$. 
\end{lem}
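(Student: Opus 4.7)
The proof naturally splits into two parts: the containment $\Sigma(\mathcal{M}_C(2,\OO{C}))\subseteq \operatorname{Fix}(\sigma)$, which is a direct calculation on $S$-equivalence classes, and the computation of $\dim\operatorname{Fix}(\sigma)$, which I would obtain by combining equivariant Riemann--Roch on the stable locus with the local model of Proposition \ref{local-structure-SL} at singular fixed points.

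First, by Proposition \ref{sing-Higgs} every point of $\Sigma(\mathcal{M}_C(2,\OO{C}))$ is $S$-equivalent to a pair $(\xi,\phi)\oplus(\xi^{-1},-\phi)$. Since $\sigma^{\ast}\xi\cong \xi^{-1}$ by Lemma \ref{action-on-Pic} and $\sigma^{\ast}\phi=-\phi$ on $H^0(C,\omega_C)$ by the description of the natural equivariant structure on $\omega_C$ in Example \ref{equiv-omega-hyper}, the involution $\sigma$ swaps the two summands and preserves the $S$-equivalence class. For a stable fixed pair $(E,\phi)$, I would lift the isomorphism $\sigma^{\ast}E\cong E$ to an equivariant structure $\alpha$ and, following the argument in the proof of Theorem \ref{hyperelliptic-moduli-bundles}, force $\wedge^2\alpha=-1$ (otherwise $E$ would be a pull-back, contradicting stability). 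Hence $\alpha_p$ has eigenvalues $\{+1,-1\}$ at each ramification point $p\in R$, and conjugation by $\alpha_p$ on $\mathcal{E}nd_0(E)_p$ has a one-dimensional fixed subspace. Applying Proposition \ref{equivariant-RR} to the two terms of the deformation complex $\mathcal{E}nd_0(E)\xrightarrow{[-,\phi]} \mathcal{E}nd_0(E)\otimes\omega_C$, and using that the equivariant structure on $\omega_C$ swaps the $\pm 1$ eigenspaces at ramification points, the calculation should yield $\chi^{\Ztwo}=-(4g-2)$; since $\mathbb{H}^0$ and $\mathbb{H}^2$ of this complex vanish at stable pairs, one deduces that $\operatorname{Fix}(\sigma)\cap\mathcal{M}^s_C(2,\OO{C})$ is smooth of local dimension $4g-2$.

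To conclude purity, I would then rule out lower-dimensional components entirely contained in the singular locus. At a point of $\Omega$ such as $(\OO{C}^{\oplus 2},0)$, Lemma \ref{tangent-action-sigma-tau} identifies the action of $\sigma$ on the local model $V\otimes \slie(2,\mathbb{C})\sslash\SL(2,\mathbb{C})$ with $-\ide_V$, which by Remark \ref{covering-involution-mu} is the covering involution of $\mu$. Hence $\operatorname{Fix}(\sigma)$ agrees locally with $\mu^{-1}(\mathcal{N}_{\leq 2}(V))$, irreducible of dimension $\dim\mathcal{N}_{\leq 2}(V)=2(2g-1)=4g-2$, and the $\sigma$-equivariant action of $\Pic^0(C)[2]$ from Remark \ref{action-of-2-torsion} propagates this description to every point of $\Omega$. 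For a point of $\Sigma\setminus\Omega$ corresponding to $(\xi,\phi)\oplus(\xi^{-1},-\phi)$ with non-isomorphic summands, an analogous Kaledin--Lehn analysis---where the transverse slice is a Kleinian $A_1$-singularity and $\sigma$ swaps the two summands---again gives local fixed dimension $4g-2$. The hard part will be precisely this last step: setting up the appropriate local model at a non-diagonal polystable fixed point and verifying that $\sigma$ is compatible with it in the right way, so that no component of $\operatorname{Fix}(\sigma)$ is trapped inside $\Sigma$ with strictly smaller dimension. Once this compatibility is secured, the three local calculations combine to yield purity of dimension $4g-2$.
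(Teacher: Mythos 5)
Your first two steps (the containment by swapping the two Jordan--H\"older summands, and the equivariant Riemann--Roch count giving $4g-2$ at stable fixed points) follow the paper's proof, and your numerical computation is correct. However, your justification for forcing $\wedge^2\alpha=-1$ has a real gap: you argue that otherwise ``$E$ would be a pull-back, contradicting stability'', importing the argument of Theorem \ref{hyperelliptic-moduli-bundles} verbatim. In the Higgs setting only the \emph{pair} $(E,\phi)$ is stable, and $E$ being a pull-back from $\mathbb{P}^1$ is no contradiction at all: for instance $(\OO{C}^{\oplus 2},\phi)$ is a stable pair for generic trace-free $\phi$, with $\OO{C}^{\oplus 2}$ pulled back. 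To rule out $\wedge^2\alpha=+1$ you must also use the equivariance of $\phi$: at each ramification point $p$ one has $\alpha_p=\pm\ide_{E_p}$ while the chosen equivariant structure on $\omega_C$ acts by $-1$ on the fibre, so commutativity forces $\phi_p=0$; hence $\phi$ factors through $E\otimes\omega_C(-R)\cong E\otimes f^\ast\omega_{\mathbb{P}^1}$ and the \emph{pair} descends to $\mathbb{P}^1$, which is what contradicts stability (a degree-zero pulled-back pair always has an invariant sub-line bundle of non-negative degree). This is exactly how the paper closes the case.

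Your ``hard part'' -- a local Kaledin--Lehn model at points of $\Sigma\setminus\Omega$ -- is left unexecuted, but it is in fact unnecessary, and the paper avoids it. Since $\Sigma(\mathcal{M}_C(2,\OO{C}))\cong T^\ast\Pic^0(C)/\pm$ is irreducible of dimension $2g$ and is entirely contained in $\operatorname{Fix}(\sigma)$, the only irreducible component of $\operatorname{Fix}(\sigma)$ that could be trapped inside the singular locus is $\Sigma$ itself; and the analysis at a single point of $\Omega$ already excludes this, because there the germ of $\operatorname{Fix}(\sigma)$ is identified via $\mu$ with $\mathcal{N}_{\leq 2}(V)$, which is irreducible of dimension $4g-2>2g$ and strictly contains the germ of $\Sigma=\mathcal{N}_{\leq 1}(V)$. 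Every other component meets the stable locus, where your equivariant Riemann--Roch computation gives dimension $4g-2$, and purity follows. (Incidentally, your description of the transverse slice at a point of $\Sigma\setminus\Omega$ as a Kleinian $A_1$ surface singularity is not correct -- the slice has dimension $4g-6$, e.g.\ $6$ for $g=3$ -- but since that step can be dispensed with, this does not affect the repaired argument.)
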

\begin{proof} The singular locus $\Sigma(\mathcal{M}_C(2, \OO{C}))$ is fixed by $\sigma$ as for any degree zero line bundle $\xi\in \Pic^0(C)$ one has $\sigma^\ast \xi \cong \xi^{-1}$ by Lemma \ref{action-on-Pic} and the induced action on global sections $H^0(C, \omega)$ is given by $-\ide$ by our choice of the equivariant structure on $\omega_C$. Moreover, it follows from Lemma \ref{tangent-action-sigma-tau} that the map $\mu$ of Proposition \ref{local-structure-SL} identifies the fixed locus of $\sigma$ around points of $\Omega(\mathcal{M}_C(2, \OO{C}))$ with $\mathcal{N}_{\leq 2}(V)$, so that $\Sigma(\mathcal{M}_C(2, \OO{C}))$ is contained in an irreducible component of the fixed locus of dimension
\begin{equation}
\dim \mathcal{N}_{\leq 2}(V) = 4g-2. 
\end{equation}
Now assume $(E,\phi)$ is a stable pair representing a smooth point in $\mathcal{M}_C(2, \OO{C})$ which is fixed by $\sigma$, i.e. there exists a commutative diagram
\begin{equation}\label{fixed-locus-higgs}
\begin{tikzcd}
E \ar[r, "\phi"] \ar[d, "\alpha"] & E\otimes \omega_C \ar[d, "\alpha"]  \\
\sigma^\ast E \ar[r, "\sigma^\ast \phi"] & \sigma^\ast E \otimes \omega_C
\end{tikzcd}
\end{equation}
Note that in the right-hand side vertical arrow we are implicitly tensoring with our choice of an equivariant structure on $\omega_C$. By stability of the pair we can assume $\alpha$ defines a $\Ztwo$-equivariant structure on $E$, so that commutativity of \eqref{fixed-locus-higgs} is equivalent to $\phi$ being a morphism of $\Ztwo$-equivariant bundles. 

By \cite[\textsection 7]{Nitsure}, the Zariski tangent space $T_{(E,\phi)}\mathcal{M}^s_C(2, \OO{C})$ at the stable pair $(E, \phi)$ fits into the long exact sequence
\begin{equation}\label{LES-defo}
\begin{tikzcd}[column sep=0.32in, row sep=small,
every label/.append style={font=\tiny}]
0 \ar[r] & \End_{C,0}(E) \arrow[d, phantom, ""{coordinate, name=Z}] \ar[r, "{[} -{,} \phi {]}"] & \Hom_{C,0}(E, E\otimes \omega_C) \ar[r] & T_{(E,\phi)}\mathcal{M}^s_C(2, \OO{C}) \ar[dll, rounded corners,
to path={ -- ([xshift=2ex]\tikztostart.east)
|- (Z) [near end]\tikztonodes
-| ([xshift=-2ex]\tikztotarget.west)
-- (\tikztotarget)}] \\
& \Ext^1_{C,0}(E,E)  \ar[r, "{[} -{,} \phi {]}"] & \Ext^1_{C,0}(E, E\otimes \omega_C) \ar[r] & 0.
\end{tikzcd}
\end{equation}

By taking $\Ztwo$-invariants we get the dimension of the fixed part as 
\begin{equation}
\dim T_{(E,\phi)}^{\Ztwo} \mathcal{M}^s_C(2, \OO{C})= -2\chi^{\Ztwo}(\EEnd_{C,0}(E)),
\end{equation} 
where $\EEnd_{C,0}(E)$ is endowed with the equivariant structure induced by $\alpha$ (see Theorem \ref{equivariant-RR} for the notation). 

Now we proceed as in the proof of Theorem \ref{hyperelliptic-moduli-bundles}. Since $\det(E)\cong \OO{C}$ we have two possibilities: either $\wedge^2 \alpha = -1$ or $\wedge^2 \alpha=1$ and hence $\alpha_p=\pm \ide_{E_p}$ at each ramification point $p\in R$ in the ramification locus.

In the second case, by replacing $E$ with $E\otimes \OO{C}(D)$, where $D$ is the effective divisor of ramification points where $\alpha_p=-\ide_{E_p}$, we can assume $E$ is actually a pull-back from $\mathbb{P}^1$. Moreover, commutativity of \eqref{fixed-locus-higgs} at ramification points -- given or choice of equivariant structure on the canonical bundle -- shows that $\phi$ actually factors through 
\begin{equation}
E\otimes \omega_C(-R) \cong E\otimes f^\ast \omega_{\mathbb{P}^1}\subseteq E\otimes \omega_C,
\end{equation}
so that $(E,\phi)$ is actually a pull-back from $\mathbb{P}^1$ as a pair, contradicting stability. 

In the first case, we can finally compute the dimension of the fixed locus around the fixed point $(E, \phi)$ by means of Theorem \ref{equivariant-RR}
\begin{equation}
\dim T_{(E,\phi)}^{\Ztwo} \mathcal{M}^s_C(2, \OO{C})= -2\chi^{\Ztwo}(\EEnd_{C,0}(E)) = 4g-2. 
\end{equation}
\end{proof}

\begin{rmk}[Irreducibility of the fixed locus of $\sigma$] We expect that $\operatorname{Fix}(\sigma)$ is actually irreducible, as it is suggested by the analogy with Theorem \ref{hyperelliptic-moduli-bundles}, which in particular proves irreducibility of the fixed locus when intersected with the locus of pairs with underlying stable bundle $T^\ast \mathcal{SU}^s_C(2, \OO{C})\subseteq \mathcal{M}_C(2,\OO{C})$.

Since it is smooth away from the singular locus by a standard argument due to Cartan \cite[\textsection 4]{Cartan}, and there is one irreducible component containing the singular locus $\Sigma(\mathcal{M}_C(2,\OO{C}))$ by Lemma \ref{fixed-locus-of-sigma}, this is equivalent to proving that the fixed locus is connected in $\mathcal{M}^s_C(2,\OO{C})$. 
\end{rmk}

\subsection{Local structure of the quotient and existence of a crepant resolution}

We will start by proving Theorem \ref{main_thm_SL}. Assume that $C$ is a smooth genus three hyperelliptic curve and consider the moduli space $\mathcal{M}_C(2, \OO{C})$, which is a 12-dimensional holomorphic symplectic variety which does not admit any crepant resolution by Theorem \ref{singularities-GL-Higgs}.  The hyperelliptic involution $\sigma\in \Aut(C)$ defines via \eqref{lifting-involution-Higgs} a symplectic involution on $\mathcal{M}_C(2,\OO{C})$, whose fixed locus is pure of dimension 10 and contains the irreducible singular locus $\Sigma(\mathcal{M}_C(2,\OO{C}))$ by Lemma \ref{fixed-locus-of-sigma}. 

\begin{lem}[Singularities of the quotient of $\mathcal{M}_C(2, \OO{C})$] \label{singularities-quotient} Let $\pi: \mathcal{M}_C(2,\OO{C}) \to Y$ be the quotient of $\mathcal{M}_C(2,\OO{C})$ by $\sigma$. Then $Y$ is a holomorphic symplectic variety of dimension 12 whose singular locus is given by $\pi(\operatorname{Fix}(\sigma))$ and is pure of dimension 10. The chain of closed immersions
\begin{equation}\label{stratification-Y}
Y \supseteq \pi(\operatorname{Fix}(\sigma)) \supseteq \pi(\Sigma(\mathcal{M}_C(2,\OO{C}))) \supseteq \pi(\Omega(\mathcal{M}_C(2,\OO{C})))
\end{equation}
defines the stratification of $Y$ by symplectic leaves of Proposition \ref{stratification-symplectic-leaves}.
\end{lem}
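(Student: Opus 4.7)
The plan is to verify the three claims in turn.

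By Theorem \ref{singularities-GL-Higgs}, $\mathcal{M}_C(2,\OO{C})$ is a $12$-dimensional holomorphic symplectic variety; $\sigma$ generates a finite group of symplectic automorphisms and $\pi$ is finite, so Example \ref{examples-HSV}(2) yields that $Y$ is itself a $12$-dimensional holomorphic symplectic variety. For the singular locus, since $\pi$ is étale off $\operatorname{Fix}(\sigma)$ and $\Sigma(\mathcal{M}_C(2,\OO{C})) \subseteq \operatorname{Fix}(\sigma)$ by Lemma \ref{fixed-locus-of-sigma}, one has $Y^{\textnormal{sing}} \subseteq \pi(\operatorname{Fix}(\sigma))$. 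Conversely, at a smooth fixed point $p \in \operatorname{Fix}(\sigma) \setminus \Sigma(\mathcal{M}_C(2,\OO{C}))$, symplecticity of $\sigma$ together with Lemma \ref{fixed-locus-of-sigma} give a symplectic splitting $T_p\mathcal{M}_C(2,\OO{C}) = F \oplus F^\perp$ with $\dim F = 10$ and $\sigma$ acting as $-1$ on the two-dimensional $F^\perp$; hence analytically locally $(Y,\pi(p)) \cong (\mathbb{C}^{10},0) \times (\mathbb{C}^2/\pm 1, 0)$ with a transverse $A_1$ singularity, so $Y^{\textnormal{sing}} = \pi(\operatorname{Fix}(\sigma))$. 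Moreover, $\sigma$ fixes every point of $\operatorname{Fix}(\sigma)$ individually, so $\pi$ restricts to an injection on $\operatorname{Fix}(\sigma)$ and $\pi(\operatorname{Fix}(\sigma))$ has pure dimension $10$.

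For the stratification \eqref{stratification-Y}, the point is that $Y$ already admits a canonical stratification by symplectic leaves via Proposition \ref{stratification-symplectic-leaves}, so the task reduces to verifying that \eqref{stratification-Y} coincides with the chain of successive singular loci. Because $\sigma$ fixes $\Sigma(\mathcal{M}_C(2,\OO{C}))$ pointwise (by Lemma \ref{action-on-Pic} and the chosen equivariant structure on $\omega_C$), $\pi$ restricts to a scheme-theoretic isomorphism $\Sigma \cong \pi(\Sigma)$; the explicit description of $\Sigma$ in Proposition \ref{sing-Higgs} as $T^\ast \Pic^0(C)/\langle (\xi,\phi) \mapsto (\xi^{-1},-\phi)\rangle$ then directly gives $\pi(\Sigma)^{\textnormal{sing}} = \pi(\Omega)$. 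To verify $\pi(\operatorname{Fix}(\sigma))^{\textnormal{sing}} = \pi(\Sigma)$, note that at smooth fixed points $p \in \operatorname{Fix}(\sigma) \setminus \Sigma$ the previous description makes $\pi(\operatorname{Fix}(\sigma))$ locally a smooth copy of $\mathbb{C}^{10}$, whereas at a point $p \in \Omega$ Lemma \ref{tangent-action-sigma-tau} and Remark \ref{covering-involution-mu} identify $\sigma$ with the covering involution of the map $\mu$ from Proposition \ref{local-structure-SL}, yielding an analytic isomorphism $(Y,\pi(p)) \cong (\mathcal{N}_{\leq 3}(V), 0)$ with $\dim V = 6$; under this identification $\pi(\operatorname{Fix}(\sigma))$, $\pi(\Sigma)$ and $\pi(\Omega)$ correspond respectively to $\mathcal{N}_{\leq 2}(V)$, $\mathcal{N}_{\leq 1}(V)$ and $\{0\}$, whose successive singular loci agree by the analysis of Section \ref{section_local_model}. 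Remark \ref{action-of-2-torsion}, together with $\sigma$-equivariance of the $\Pic^0(C)[2]$-action (since $\sigma^\ast \eta \cong \eta^{-1} \cong \eta$ for any $2$-torsion $\eta$ by Lemma \ref{action-on-Pic}), extends this picture to every point of $\pi(\Omega)$.

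The hardest part will be propagating this local description from a neighborhood of $\pi(\Omega)$ to points $\pi(q)$ with $q \in \Sigma \setminus \Omega$ lying far from $\pi(\Omega)$. The approach is to use a $\sigma$-equivariant refinement of the local product decomposition in Proposition \ref{stratification-symplectic-leaves}(iii) at $q$: since $\sigma$ fixes $\Sigma$ pointwise in a neighborhood of $q$, it acts trivially on the first factor of the product, and a $\sigma$-equivariant slice argument (or directly the Kaledin--Lehn analysis applied to polystable Higgs bundles $F = E_1 \oplus E_2$ with distinct stable Jordan--Hölder factors) produces a local model for $(Y,\pi(q))$ compatible with the one established near $\pi(\Omega)$, so that $\pi(\operatorname{Fix}(\sigma))$ is singular along $\pi(\Sigma)$ throughout the intermediate stratum.
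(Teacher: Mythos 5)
Most of your argument runs along the same lines as the paper's (very terse) proof and the surrounding material: $Y$ is symplectic by Example \ref{examples-HSV}, the transverse $A_1$ picture at smooth fixed points comes from Cartan linearisation plus the codimension-two statement of Lemma \ref{fixed-locus-of-sigma}, $\pi(\Sigma)\cong\Sigma\cong T^\ast\Pic^0(C)/\pm$ gives $\pi(\Sigma)^{\mathrm{sing}}=\pi(\Omega)$, and at points of $\Omega$ the identification of $(Y,\pi(p))$ with $(\mathcal{N}_{\leq 3}(V),0)$, with $\pi(\operatorname{Fix}(\sigma))$, $\pi(\Sigma)$, $\pi(\Omega)$ going to $\mathcal{N}_{\leq 2}(V)$, $\mathcal{N}_{\leq 1}(V)$, $\{0\}$, is exactly what Lemma \ref{tangent-action-sigma-tau}, Remark \ref{covering-involution-mu} and Proposition \ref{local-structure-SL} provide. (Minor point: your conclusion $Y^{\mathrm{sing}}=\pi(\operatorname{Fix}(\sigma))$ needs the extra remark that $\pi(\Sigma)$ lies in the closure of $\pi(\operatorname{Fix}(\sigma)\setminus\Sigma)$, since $\operatorname{Fix}(\sigma)$ is pure of dimension $10>\dim\Sigma$; this is immediate but as written you only prove the inclusion $Y^{\mathrm{sing}}\subseteq\pi(\operatorname{Fix}(\sigma))$ together with singularity along the open part of the fixed locus.)

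The genuine gap is the step you yourself flag as the hardest: showing $\pi(\operatorname{Fix}(\sigma))$ is singular along \emph{all} of $\pi(\Sigma)$, not just near $\pi(\Omega)$. The route you sketch does not go through as stated: Proposition \ref{Kaledin-Lehn-crucial} is formulated for a polystable sheaf of the form $F=E\oplus E$, not for $F=E_1\oplus E_2$ with distinct stable Jordan--H\"older factors (there the local model is a Hamiltonian reduction by $\mathbb{C}^\ast$, and $\sigma$ \emph{swaps} the two factors, so a separate computation would be needed), and a ``$\sigma$-equivariant refinement'' of the product decomposition in Proposition \ref{stratification-symplectic-leaves}(iii) is not something you can simply invoke, since that germ isomorphism is not canonical and carries no equivariance. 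Fortunately the step can be closed much more cheaply: $\pi(\operatorname{Fix}(\sigma))^{\mathrm{sing}}$ is a closed algebraic subset, and by your local analysis at $\pi(\Omega)$ it contains $\pi(\Sigma)\cap U$ for an analytic neighbourhood $U$ of $\pi(\Omega)$ (since $\mathcal{N}_{\leq 2}(V)^{\mathrm{sing}}=\mathcal{N}_{\leq 1}(V)$); as $\pi(\Sigma)\cong\Sigma$ is irreducible, any closed algebraic set containing a non-empty analytically open subset of it contains all of it, so $\pi(\operatorname{Fix}(\sigma))^{\mathrm{sing}}\supseteq\pi(\Sigma)$, and the reverse inclusion follows from smoothness of $\operatorname{Fix}(\sigma)$ away from $\Sigma$. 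With that replacement your proof is complete and is essentially the argument the paper leaves implicit.
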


\begin{proof} Since $Y$ is a holomorphic symplectic variety by Example \ref{examples-HSV}, the statement follows from Lemma \ref{fixed-locus-of-sigma} and Proposition \ref{stratification-symplectic-leaves}.
\end{proof}

Now let $\mathcal{I}\subseteq \OO{Y}$ be the ideal sheaf of the reduced singular locus, i.e. of  $\pi(\operatorname{Fix}(\sigma))\subseteq Y$ with its reduced subscheme structure. Consider the corresponding blow-up
\begin{equation}\label{blow-up-resolution}
f \colon \tilde{Y}\coloneqq \operatorname{Bl}_\mathcal{I}(Y) \to Y.
\end{equation}

We will show that $f$ is a crepant resolution of $Y$ by proving this holds locally over $Y$. 

\begin{lem}\label{discrepancy-clopen} Let $X$ be a normal algebraic variety equipped with a stratification by locally closed connected smooth subvarieties $X_i \subseteq X$ such that $X$ is normally flat along each stratum. Assume $f\colon \tilde{X}\to X$ is a blow-up of $X$ along a closed union of strata. Then any integral component of the exceptional divisor of $f$ surjects onto the closure of a stratum. 
\end{lem}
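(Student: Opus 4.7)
The plan is to combine a dimension count for the exceptional divisor with an upper bound on fiber dimensions over the strata coming from normal flatness. Concretely, let $E_0$ be an integral component of the exceptional divisor of $f$, and set $W\coloneqq f(E_0)$, which is an integral closed subvariety of $Z$. Let $X_i$ be the unique stratum containing the generic point of $W$, so that $W\cap X_i$ is open and dense in $W$, and hence $W\subseteq \overline{X_i}$. The goal becomes to prove the reverse inclusion $W=\overline{X_i}$.

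First I would carry out the dimension count: since $E_0$ is a prime divisor on $\tilde{X}$, we have $\dim E_0 = \dim X - 1$. Applying the fiber dimension theorem to the surjective projective morphism $f|_{E_0}\colon E_0\twoheadrightarrow W$ between integral varieties, the generic fiber $F$ of $f|_{E_0}$ has dimension $\dim X - 1 - \dim W$. Since the generic point $\eta$ of $W$ lies in $X_i$, the fiber $F$ is contained in $f^{-1}(\eta)$, so
\[
\dim X - 1 - \dim W \leq \dim f^{-1}(\eta).
\]

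The key step, and the main obstacle, is to show that $\dim f^{-1}(\eta) \leq \dim X - \dim X_i - 1$. Normal flatness of $X$ along $X_i$ means that the graded $\OO{X_i}$-algebra $\bigoplus_{k\geq 0}\mathcal{I}_{X_i}^k/\mathcal{I}_{X_i}^{k+1}$ is locally free, equivalently that the Hilbert--Samuel function of $\OO{X,x}$ is constant as $x$ varies in $X_i$. Combined with the hypothesis that $Z$ is a closed union of strata, so that $Z$ has a uniform local structure inside $X$ along $X_i$, I would argue that the restriction of the normal cone $C_{Z/X}$ to $X_i$ is flat over $X_i$, so that the projective morphism $f^{-1}(X_i)\to X_i$ has equidimensional fibers of dimension $\dim X - \dim X_i - 1$; in particular the desired bound on $\dim f^{-1}(\eta)$ holds even when $\eta$ fails to be a generic point of $X_i$. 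I expect this passage from normal flatness of $X_i$ in $X$ to flatness of $C_{Z/X}|_{X_i}$, which controls how the blow-up fibers behave when $\eta$ specialises within the stratum, to be the most delicate part of the argument.

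Combining the two inequalities yields $\dim W \geq \dim X_i$, and since $W$ is an integral closed subvariety of the irreducible variety $\overline{X_i}$ of the same dimension, we conclude $W = \overline{X_i}$, which completes the proof.
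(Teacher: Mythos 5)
Your argument is essentially the paper's, just run forwards instead of by contradiction: the paper likewise reduces everything to the claim that $f^{-1}(X_i)\to X_i$ has equidimensional fibres over each stratum, and then compares $\dim E_0$, $\dim f(E_0)$ and the fibre dimension (phrased there as: if $\dim\bigl(f(E_0)\cap X_i\bigr)<\dim X_i$, then $\dim f^{-1}\bigl(f(E_0)\cap X_i\bigr)<\dim f^{-1}(X_i)\le\dim E_0$, contradicting that this preimage contains a dense open subset of $E_0$). The step you flag as delicate is exactly the step the paper dispatches in one sentence, reading the normal-flatness hypothesis as giving flatness of $f\colon f^{-1}(X_i)\to X_i$; in the intended application this is what Remark \ref{normal_flatness} provides, since the local product decomposition along symplectic leaves decomposes the normal cone of the blow-up centre $Z$ along each stratum and hence gives flatness of $C_{Z/X}\vert_{X_i}$ over $X_i$. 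So your identification of the key input is correct and matches the paper's route. Two minor points: you only need the inequality $\dim f^{-1}(\eta)\le\dim X-\dim X_i-1$, not the exact fibre dimension you assert, which can fail for strata that are not dense in a component of $Z$; and that inequality follows from equidimensionality of the fibres over the irreducible $X_i$ together with $f^{-1}(X_i)\subseteq E$, where $E$ has pure dimension $\dim X-1$ because it is an effective Cartier divisor on the integral variety $\tilde{X}$.
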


\begin{proof} Let $E$ be an integral exceptional divisor and let $X_i$ be a stratum containing the generic point of $f(E)$. We want to show that $f(E)\cap X_i=X_i$.  If this is not the case, then $\dim f(E)\cap X_i < \dim X_i$. The intersection $f(E)\cap X_i$ contains an open dense subset of $f(E)$, and hence $f^{-1}(f(E)\cap X_i)$ contains an open dense subset of $E$. 
By our normal flatness assumption, the restriction $f\colon f^{-1}(X_i)\to X_i$ is flat and hence
\begin{equation}
\dim f^{-1}(f(E)\cap X_i)< \dim f^{-1}(X_i) \leq \dim E,
\end{equation}
which is a contradiction. 
\end{proof}

By Remark \ref{normal_flatness}, we can apply the result to the stratification by symplectic leaves of a symplectic variety given by Proposition \ref{stratification-symplectic-leaves}:

\begin{cor}\label{locus-non-crepant-resolution-clopen} Let $X$ be a holomorphic symplectic variety and $f\colon \tilde{X}\to X$ a blow-up of $X$ along a closed union of symplectic leaves which is an isomorphism over $X^{\operatorname{reg}}$. Then the locus of points of $X$ at which $f$ is not a crepant resolution is a closed union of symplectic leaves. 
\end{cor}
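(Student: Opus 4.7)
The plan is to combine two ingredients already available in the earlier subsection: crepancy is local over the base (Remark \ref{crepancy-local}), and the symplectic stratification is locally trivial (Proposition \ref{stratification-symplectic-leaves}(iii)). The first gives closedness of the non-crepant locus, while the second forces it to be saturated under symplectic leaves.

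Let $N \subseteq X$ be the set of points at which $f$ is not a crepant resolution. By Remark \ref{crepancy-local}, the complementary set $U = X \setminus N$ is open, so $N$ is closed. To see that $N$ is a union of leaves, fix $x \in N$ and let $L$ be its symplectic leaf. Proposition \ref{stratification-symplectic-leaves}(iii) furnishes an analytic neighbourhood $V \ni x$ with a symplectic product decomposition $V \cong (L \cap V) \times (W, 0)$ that is compatible with the stratification, so that the blow-up centre, being a union of leaves by assumption, meets $V$ in a product of the form $(L \cap V) \times B$ for some union of leaves $B \subseteq W$. Since blow-ups commute with smooth base change, this lifts to $\tilde{X} \times_X V \cong (L \cap V) \times \tilde{W}$, where $\tilde{W} \to W$ is the blow-up of $W$ along $B$. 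Crepancy is manifestly preserved under products by smooth factors, so $f$ is a crepant resolution at $y \in L \cap V$ if and only if $\tilde{W} \to W$ is a crepant resolution at $0$, independently of $y$. In particular $L \cap V \subseteq N$, and connectedness of $L$ then propagates this to all of $L \subseteq N$.

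The delicate point is the compatibility of the local product decomposition of Proposition \ref{stratification-symplectic-leaves}(iii) with the symplectic stratification, which is precisely what allows the blow-up centre to inherit a product form and thus lets us transfer the question of crepancy to the transverse slice $W$. As an alternative, one can apply Lemma \ref{discrepancy-clopen} — legitimate in view of Remark \ref{normal_flatness} — to conclude that every integral component of the exceptional divisor of $f$ dominates the closure of a stratum, which directly identifies the discrepancy divisor as a union of stratum closures; this route, however, only addresses the locus on which $\tilde{X}$ is smooth, whereas the local product argument above simultaneously handles any possible singularities of $\tilde{X}$.
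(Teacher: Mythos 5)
Your argument is correct, but it is not the route the paper takes. The paper's proof splits the non-crepant locus into two pieces: the image of the singular locus of $\tilde{X}$, which is a closed union of strata by the local product structure, and, over the open set where $f$ is a resolution, the discrepancy centre, which is a union of stratum closures by Lemma \ref{discrepancy-clopen} (via normal flatness, Remark \ref{normal_flatness}) -- in other words, exactly the ``alternative'' you sketch at the end, supplemented by the singular-locus step whose necessity you correctly point out. Your primary argument is genuinely different: you prove local constancy of the property ``$f$ is a crepant resolution at $x$'' along leaves, using Proposition \ref{stratification-symplectic-leaves}(iii) together with the fact that the stratification is intrinsic (so the product decomposition matches strata with products, the transverse slice meets the leaf through $x$ only at $0$, and the centre becomes $(L\cap V)\times B$ with reduced ideal pulled back from $B$), plus compatibility of blowing up with the flat projection to the slice, so that crepancy at a point of the leaf is equivalent to crepancy of $\operatorname{Bl}_B W \to W$ at $0$; openness of the crepant locus (Remark \ref{crepancy-local}) and connectedness of the leaves then give closedness and leaf-saturation. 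This handles the smoothness of $\tilde{X}$ and the isomorphism $f^{\ast}\omega_X \to \omega_{\tilde{X}}$ in one stroke and avoids Lemma \ref{discrepancy-clopen} entirely, at the price of the routine verifications you flag (product compatibility of the intrinsic stratification, reducedness of $(L\cap V)\times B$, and the K\"unneth-type decomposition of $\omega$ for a product of a smooth and a Gorenstein factor, which makes crepancy transfer). The paper's route is shorter given that Lemma \ref{discrepancy-clopen} is already established; yours is more self-contained and yields the slightly stronger conclusion that whether $f$ is a crepant resolution at a point depends only on the transverse germ along its leaf.
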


\begin{proof} It follows immediately from the properties of the stratification by symplectic leaves in Proposition \ref{stratification-symplectic-leaves} that the image of the reduced singular locus of $\tilde{X}$ is a closed union of strata. Hence we can restrict to the open subset where $f$ is a resolution of singularities and apply Lemma \ref{discrepancy-clopen} to get that the discrepancy centre is also a closed union of strata. 
\end{proof}

We are finally ready to prove Theorem \ref{main_thm_SL}:

\begin{proof}[Proof of Theorem \ref{main_thm_SL}] 
Note that the reduced singular locus of $Y$ is a closed union of symplectic leaves according to Lemma \ref{singularities-quotient}, so by Corollary \ref{locus-non-crepant-resolution-clopen},  it is enough to show that each connected component of the strata contains a point at which $f$ is a crepant resolution. This is immediate for smooth points of $Y$. 

By a standard trick due to Cartan \cite[\textsection 4]{Cartan}, we can linearise and diagonalise the $\Ztwo$-action around a smooth point of $\operatorname{Fix}(\sigma)$, so that by Lemma \ref{fixed-locus-of-sigma} the moduli space around that point is equivariantly isomorphic to a neighbourhood of $(0,0)$ in $\mathbb{C}^{10}\times \mathbb{C}^2$, where $\Ztwo$ acts on the second factor as $-1$. In other words, each connected component of the open stratum in \eqref{stratification-Y} given by $\pi(\operatorname{Fix}(\sigma))$ is a family of canonical surface singularities, and is hence crepantly resolved by $f$.

Since $\pi(\Sigma(\mathcal{M}_C(2,\OO{C})))$ is irreducible, the corresponding open stratum is connected, while the deepest stratum is given by the points of  $\pi(\Omega(\mathcal{M}_C(2,\OO{C})))$. Since we have a $\Pic^0(C)[2]$-action on $\Omega(\mathcal{M}_C(2,\OO{C}))$ which is transitive and commutes with $\sigma$ (Remark \ref{action-of-2-torsion}), it is enough to prove that $f$ is a crepant resolution in an analytic neighbourhood of any point in $\pi(\Omega(\mathcal{M}_C(2,\OO{C})))$. In fact, such a neighbourhood will also intersect the open stratum given by $\pi(\Sigma(\mathcal{M}_C(2,\OO{C})))$.

By Lemma \ref{tangent-action-sigma-tau} and Remark \ref{covering-involution-mu} the involution $\sigma$ on $(\mathcal{M}_C(2,\OO{C}), (\OO{C}^{\oplus 2}, 0))$ agrees with the covering involution of the map 
\begin{equation}
\mu\colon (\mathcal{M}_C(2,\OO{C}), (\OO{C}^{\oplus 2}, 0)) \to (\mathcal{N}_{\leq 3}(V), 0)
\end{equation}
of Proposition \ref{local-structure-SL}. In particular, $\mu$ descends to an isomorphism of analytic germs of symplectic varieties
\begin{equation}\label{crucial-isomorphism-of-germs}
\mu\colon (Y, \pi(\OO{C}^{\oplus 2}, 0)) \to (\mathcal{N}_{\leq 3}(V), 0).
\end{equation}
Note that the stratification \eqref{stratification-Y} of $Y$ gets identified with the stratification \eqref{2-nilpotent-orbits-closure-defn} of $\mathcal{N}_{\leq 3}(V)$ by open orbits of smaller rank. Since the blow-up of the reduced singular locus of $\mathcal{N}_{\leq 3}(V)$ is a crepant resolution by Theorem \ref{blow-up-nilpotent-cone}, the same holds for $Y$ in a neighbourhood of $\pi(\OO{C}^{\oplus 2}, 0)$. 
\end{proof}

\begin{rmk}[Higher genera] It is worth remarking that the assumption on the genus of $C$ in Theorem \ref{main_thm_SL} is essential. When $g\geq 4$, we still get an isomorphism of analytic germs between the quotient $(Y, \pi(\OO{C}^{\oplus 2}, 0))$ and the symplectic orbit closure $(\mathcal{N}_{\leq 3}(V), 0)$, but this time $\dim V\geq 8$, so that the orbit $\mathcal{N}_{\leq 3}(V)$ is not the full 2-nilpotent cone and hence does not admit a crepant resolution according to Theorem \ref{blow-up-nilpotent-cone}. 
\end{rmk}

We now show how to deduce Theorem \ref{main_thm_GL} from Theorem \ref{main_thm_SL}, by exploiting a suitable equivariant version of Remark \ref{triviality-determinant-morphism-Higgs}. 

\begin{lem}[Singularities of the quotient of $\mathcal{M}_C(2,0)$] Let $\pi: \mathcal{M}_C(2,0) \to X$ be the quotient of $\mathcal{M}_C(2,0)$ by $\sigma\circ \tau$. Then $X$ is a holomorphic symplectic variety of dimension 18 whose singular locus is given by $\pi(\operatorname{Fix}(\sigma\circ \tau))$ and is pure of dimension 16. The chain of closed immersions
\begin{equation}\label{whitney-stratification-GL}
X \supseteq \pi(\operatorname{Fix}(\sigma\circ \tau)) \supseteq \pi(\Sigma(\mathcal{M}_C(2,0))) \supseteq \pi(\Omega(\mathcal{M}_C(2,0)))
\end{equation}
defines the stratification of $X$ by symplectic leaves of Proposition \ref{stratification-symplectic-leaves}.
\end{lem}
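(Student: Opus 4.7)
My plan is to mirror the argument of Lemma~\ref{singularities-quotient} and reduce the $\GL$-statement to the $\SL$-case via the equivariant \'etale cover from Remark~\ref{triviality-determinant-morphism-Higgs}. To begin, $X$ is a holomorphic symplectic variety of dimension $18=4(2g-2)+2$ by Example~\ref{examples-HSV}, since $\sigma\circ\tau$ is a symplectic involution on $\mathcal{M}_C(2,0)$. The inclusion $\Sigma(\mathcal{M}_C(2,0))\subseteq \operatorname{Fix}(\sigma\circ\tau)$ is immediate from the fact that $\sigma\circ\tau$ acts as the identity on $\mathcal{M}_C(1,0)=T^\ast\Pic^0(C)$: combine Lemma~\ref{action-on-Pic} with our chosen equivariant structure on $\omega_C$ inducing $-\ide$ on $H^0(C,\omega_C)$ (Example~\ref{equiv-omega-hyper}), as already observed in the proof of Lemma~\ref{tangent-action-sigma-tau}. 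Consequently the singular locus $\Sigma(\mathcal{M}_C(2,0))$ of $\mathcal{M}_C(2,0)$ sits inside $\operatorname{Fix}(\sigma\circ\tau)$, and at smooth points of $\mathcal{M}_C(2,0)$ the quotient $\pi$ is ramified precisely along the (codimension-$2$) fixed locus; this gives the identification $X^{\mathrm{sing}}=\pi(\operatorname{Fix}(\sigma\circ\tau))$.

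The main technical step, which I expect to be the principal obstacle, is the dimension and purity of $\operatorname{Fix}(\sigma\circ\tau)$. I would transfer this from Lemma~\ref{fixed-locus-of-sigma} via the \'etale cover
\begin{equation*}
p\colon \mathcal{M}_C(2,\OO{C})\times T^\ast\Pic^0(C)\to \mathcal{M}_C(2,0),
\end{equation*}
on which $\sigma\circ\tau$ lifts to $(\sigma,\ide)$ by the previous paragraph. The key subtlety is to verify that $p$ is strongly \'etale in the sense of \cite[App.~D to Ch.~1]{GIT} with respect to this lifted action; this follows exactly as in Theorem~\ref{hyperelliptic-moduli-bundles} from the fact that $\sigma$ commutes with the Galois action of $\Pic^0(C)[2]$, which in turn is a consequence of $\sigma^\ast\eta\cong\eta$ for 2-torsion $\eta$ (Lemma~\ref{action-on-Pic}). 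Granted strong \'etaleness, $p$ restricts to an \'etale surjection between fixed loci, so Lemma~\ref{fixed-locus-of-sigma} yields
\begin{equation*}
\dim \operatorname{Fix}(\sigma\circ\tau) = \dim\operatorname{Fix}(\sigma)+\dim T^\ast\Pic^0(C) = (4g-2)+2g = 16,
\end{equation*}
pure since both factors are and purity is preserved under \'etale base change.

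For the stratification by symplectic leaves, Proposition~\ref{stratification-symplectic-leaves} reduces the claim to the assertion that each term of \eqref{whitney-stratification-GL} is the singular locus of the previous one. The first step is exactly what we just established. For the remaining steps, $\sigma\circ\tau$ acts trivially on $\Sigma(\mathcal{M}_C(2,0))\supseteq \Omega(\mathcal{M}_C(2,0))$ and $\pi$ is finite, so the images $\pi(\Sigma(\mathcal{M}_C(2,0)))$ and $\pi(\Omega(\mathcal{M}_C(2,0)))$ are scheme-theoretically isomorphic to these substrata and their singular-locus filtration is read off from Proposition~\ref{sing-Higgs}. Finally, a standard application of Cartan's linearisation lemma around smooth fixed points of $\sigma\circ\tau$ in $\mathcal{M}_C(2,0)$ shows that $\operatorname{Fix}(\sigma\circ\tau)$ is smooth away from $\Sigma(\mathcal{M}_C(2,0))$, so that $\pi(\Sigma(\mathcal{M}_C(2,0)))$ is indeed the singular locus of $\pi(\operatorname{Fix}(\sigma\circ\tau))$, completing the identification of the stratification.
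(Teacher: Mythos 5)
Your proposal follows essentially the same route as the paper: reduce to Lemma \ref{singularities-quotient} and Lemma \ref{fixed-locus-of-sigma} via the strongly \'etale $\Ztwo$-equivariant Galois cover of Remark \ref{triviality-determinant-morphism-Higgs}, on which $\sigma\circ\tau$ lifts to $(\sigma,\ide)$ because it acts trivially on the $T^\ast\Pic^0(C)$ factor, giving the dimension $16$ and purity of the fixed locus. The only cosmetic difference is that the paper obtains the stratification \eqref{whitney-stratification-GL} by transferring the stratification \eqref{stratification-Y} of $Y$ through the cartesian \'etale diagram, whereas you re-derive it directly; both are fine, and your treatment of strong \'etaleness is at the same level of detail as the paper's.
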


\begin{proof} Once again, $X$ is a holomorphic symplectic variety by Example \ref{examples-HSV}. We reduce to the case of $\mathcal{M}_C(2, \OO{C})$ by the same argument used in the proof of Theorem \ref{hyperelliptic-moduli-bundles}. Indeed, by Remark \ref{triviality-determinant-morphism-Higgs}, the moduli space $\mathcal{M}_C(2,0)$ looks \'etale locally like a product $\mathcal{M}_C(2,\OO{C})\times T^\ast \Pic^0(C)$. The Galois cover 
\begin{equation}\label{Galois-cover}
\mathcal{M}_C(2,\OO{C})\times T^\ast \Pic^0(C) \to \mathcal{M}_C(2,0)
\end{equation}
is $\Ztwo$-equivariant with respect to the action on $\mathcal{M}_C(2,\OO{C})$ and $\mathcal{M}_C(2,0)$ defined by $\sigma\circ \tau$.

Moreover, \eqref{Galois-cover} is strongly \'etale -- in the sense of \cite[Appendix D to Ch. 1]{GIT} -- with respect to the action of $\sigma\circ \tau$. It induces an \'etale map between the corresponding fixed loci, so that the fixed locus of $\sigma\circ \tau$ in $\mathcal{M}_C(2,0)$ contains the singular locus and is pure of dimension 16 by Lemma \ref{fixed-locus-of-sigma}. Moreover there is a cartesian diagram
\begin{equation} \label{crucial-diagram}
\begin{tikzcd}
\mathcal{M}_C(2, \OO{C})\times T^\ast \Pic^0(C) \ar[r] \ar[d] &  \mathcal{M}_C(2, 0) \ar[d] \\
Y \times T^\ast \Pic^0(C) \ar[r] & X
\end{tikzcd}
\end{equation}
where the vertical arrows are the quotient morphisms and the bottom arrow is \'etale. In particular, the stratification \eqref{whitney-stratification-GL} can be obtained from the corresponding stratification \eqref{stratification-Y} of $Y$ from Lemma \ref{singularities-quotient}. 
\end{proof}

We can now deduce Theorem \ref{main_thm_GL} from Theorem \ref{main_thm_SL}:

\begin{proof}[Proof of Theorem \ref{main_thm_GL}] Let $\mathcal{J}\subseteq \OO{X}$ be the ideal sheaf of the reduced singular locus, i.e. of  $\pi(\operatorname{Fix}(\sigma\circ \tau))\subseteq X$ with its reduced subscheme structure. Consider the corresponding blow-up
\begin{equation}
g \colon \tilde{X}\coloneqq \operatorname{Bl}_{\mathcal{J}}(X) \to X.
\end{equation}
Note that diagram \eqref{crucial-diagram} shows that $X$ and $Y$ are stably isosingular \cite[Def. 2.6]{Mirko}, hence $g\colon \tilde{X}\to X$ is crepant if and only if $f\colon \tilde{Y}\to Y$ is crepant, so the result follows from Theorem \ref{main_thm_SL}. \end{proof}

Note that the determinant morphism from Remark \ref{triviality-determinant-morphism-Higgs} being invariant with respect to $\sigma\circ \tau$, it descends to the quotient. In other words diagram \eqref{crucial-diagram} can be enhanced to
\begin{equation} 
\begin{tikzcd}
\mathcal{M}_C(2, \OO{C})\times T^\ast \Pic^0(C) \ar[r] \ar[d] &  \mathcal{M}_C(2, 0) \ar[d] \ar[dd, bend left, "{(\det, \operatorname{tr})}"] \\
Y \times T^\ast \Pic^0(C) \ar[r] \ar[d] & X \ar[d] \\
T^\ast \Pic^0(C) \ar[r] & T^\ast \Pic^0(C)
\end{tikzcd}
\end{equation}
so that $X\to T^\ast \Pic^0(C)$ is an isotrivial fibration with fibre $Y$.  

\begin{rmk} If one only considers the hyperelliptic involution $\sigma$ acting on $\mathcal{M}_C(2, 0)$ and $\mathcal{M}_C(2, \OO{C})$ via pull-back, the Galois cover \eqref{Galois-cover} is still $\Ztwo$-equivariant, but the induced action on $T^\ast \Pic^0(C)$ is non-trivial. In particular, the singular locus is no longer contained in the fixed locus, and the latter is pure of dimension $10$, so that the quotient has no crepant resolution.  
\end{rmk}

\begin{rmk}[Modular interpretation] In \cite{Seshadri-desingularisation}, Seshadri constructs resolutions of singularities of some moduli spaces of rank two vector bundles on curves which have a modular interpretation: they are moduli spaces of vector bundles with a specific parabolic structure. It would certainly be interesting to know if the symplectic resolutions we consider in Theorem \ref{main_thm_GL} and \ref{main_thm_SL} also have a modular interpretation.
\end{rmk}

{\pagestyle{plain}
\bibliographystyle{halpha}
\bibliography{refs}}

\end{document}